\theoremstyle{plain}
\newtheorem{thmx}{Theorem}
\renewcommand{\thethmx}{\Alph{thmx}} 
\newtheorem{thm}{Theorem}[section]  
\newtheorem{lem}[thm]{Lemma}
\newtheorem{claim}[thm]{Claim} 
\newtheorem{proposition}[thm]{Proposition}
\newtheorem{cor}[thm]{Corollary}
\theoremstyle{definition}
\newtheorem{dfn}[thm]{Definition}
\theoremstyle{remark}
\newtheorem{rem}[thm]{Remark}
\numberwithin{equation}{section}  
\theoremstyle{plain}
\newlist{thmlist}{enumerate}{1}
\setlist[thmlist]{wide = 0pt, labelwidth = 2em, labelsep*=0em, itemindent = 0pt, leftmargin = \dimexpr\labelwidth + \labelsep\relax, noitemsep,topsep = 1ex, font=\normalfont, label=(\roman*), ref=\thethm.(\roman{thmlisti})}
\newlist{thmenum}{enumerate}{1} 
\setlist[thmenum]{wide = 0pt, labelwidth = 2em, labelsep*=0em, itemindent = 0pt, leftmargin = \dimexpr\labelwidth + \labelsep\relax, noitemsep,topsep = 1ex, font=\normalfont, label=(\roman*), ref=\thethmx.(\roman{thmenumi})}
\newlist{corlist}{enumerate}{1} 
\setlist[corlist]{wide = 0pt, labelwidth = 2em, labelsep*=0em, itemindent = 0pt, leftmargin = \dimexpr\labelwidth + \labelsep\relax, noitemsep,topsep = 1ex, font=\normalfont, label=(\roman*), ref=\thecorx.(\roman{corlisti})}
\crefname{lem}{Lemma}{Lemmas} 
\crefname{conjecture}{Conjecture}{Conjectures}
\crefname{thm}{Theorem}{Theorems}
\crefname{proposition}{Proposition}{Propositions}
\crefname{dfn}{Definition}{Definitions}
\crefname{rem}{Remark}{Remarks}
\crefname{cor}{Corollary}{Corollaries}
\crefname{corx}{Corollary}{Corollaries}
\crefname{problem}{Problem}{Problems}
\crefname{thmx}{Theorem}{Theorems}
\crefname{claim}{Claim}{Claims}
\crefname{assumption}{Assumption}{Assumptions}
\crefname{main}{Main Theorem}{Main Theorems}
\def\ep{\varepsilon}
\def\pr{{\rm pr}}
\newcommand{\cR}{\mathcal{R}}
\newcommand{\cS}{\mathcal{S}}
\newcommand*{\rom}[1]{\expandafter\@slowromancap\romannumeral #1@}
\setlist[itemize]{wide = 0pt, labelwidth = 2em, labelsep*=0em, itemindent = 0pt, leftmargin = \dimexpr\labelwidth + \labelsep\relax, noitemsep,topsep = 1ex,}
\setlist[enumerate]{wide = 0pt, labelwidth = 2em, labelsep*=0em, itemindent = 0pt, leftmargin = \dimexpr\labelwidth + \labelsep\relax, noitemsep,topsep = 1ex}
\newcommand{\crefnames}[3]{%
	\@for\next:=#1\do{%
		\expandafter\crefname\expandafter{\next}{#2}{#3}%
	}%
}
\newcommand{\sD}{\mathscr{D}}
\newcommand{\sO}{\mathscr{O}}
\newcommand{\cC}{\mathcal C}
\newcommand{\cF}{\mathcal F}
\newcommand{\cN}{\mathcal N}
\newcommand{\cO}{\mathcal O}
\newcommand{\bB}{\mathbb{B}}
\newcommand{\bC}{\mathbb{C}}
\newcommand{\bD}{\mathbb{D}}
\newcommand{\bH}{\mathbb{H}}
\newcommand{\bN}{\mathbb{N}}
\newcommand{\bP}{\mathbb{P}}
\newcommand{\bR}{\mathbb{R}}
\newcommand{\bU}{\mathbb{U}}
\newcommand{\bZ}{\mathbb{Z}}
\newcommand{\xsp}{X^{\! \rm sp}}
\newcommand{\zsp}{Z^{\! \rm sp}}
\newcommand{\ysp}{Y^{\! \rm sp}}
\def\db{\bar{\partial}}
 \def\d{\partial} 
\def\hess{\partial\db}
\def\sn{\sqrt{-1}}
\def\Sym{{\text{Sym}}}
\def\vol{\text{\small  Vol}}
\newcommand{\GL}{{\rm GL}}
\begin{document} 
	\title[Pluriharmonic maps to Euclidean buildings]{Existence and Unicity of pluriharmonic maps to Euclidean buildings and applications} 
	
	\date{\today} 
\author[Y. Deng]{Ya Deng}
\address{CNRS, Institut \'Elie Cartan de Lorraine, Universit\'e de Lorraine, Site de
	Nancy,   54506 Vand\oe uvre-lès-Nancy, France} 
\email{ya.deng@math.cnrs.fr}
\urladdr{https://ydeng.perso.math.cnrs.fr} 
	
	\author[C. Mese]{Chikako Mese}
	\address{Johns Hopkins University, Department of Mathematics, Baltimore, MD}
	\email[Chikako Mese]{cmese@math.jhu.edu} 
	\urladdr{https://sites.google.com/view/chikaswebpage/home}

	\keywords{(Pluri)hamonic map,  Bruhat-Tits buildings, spectral cover}
	\begin{abstract}  
		Given a  complex smooth quasi-projective variety $X$, a reductive  algebraic group $G$ defined over some non-archimedean local field $K$ and  a Zariski dense   representation   $\varrho:\pi_1(X)\to G(K)$, we construct  a $\varrho$-equivariant  pluriharmonic map from the universal cover  of $X$ into the Bruhat-Tits building $\Delta(G)$ of $G$, with appropriate asymptotic behavior. We also establish the uniqueness of such a pluriharmonic map in a suitable sense, and provide a geometric characterization of these equivariant maps.  This paper builds upon and extends previous work by the authors  jointly with G.~Daskalopoulos and D.~Brotbek. 
	\end{abstract}

	\maketitle
	\tableofcontents	
	\section{Introduction}
	
	Pluriharmonic maps are a  powerful tool in establishing a bridge between differential geometry, complex geometry, and geometric group theory. 
In his influential paper \cite{siu}, Y.-T.~Siu proved that harmonic maps between K\"ahler manifolds of complex dimension $\geq 2$ are pluriharmonic, provided that the domain manifold is compact and the target is non-positively curved in a strong sense.  With this, Siu proved  the strong rigidity of compact Kähler manifolds, a significant extension of  Mostow's strong rigidity theorem for locally symmetric spaces.  The applications of pluriharmonic maps in the manifold setting have been further developed by various authors, including Sampson \cite{Sam85}, Corlette \cite{Cor88}, Carlson-Toledo \cite{carlson-toledo} and Jost-Yau \cite{jost-yau} and Mochizuki \cite{Moc07},  contributing to the study of diverse problems such as   rigidity of representations, Higgs bundles,  higher Teichm\"uller space, and non-abelian Hodge theory.
 Gromov-Schoen \cite{GS92} extended these ideas by studying pluriharmonic maps into Euclidean buildings.  Building on this work,  we continue to extend Gromov and Schoen's ideas to a more general setting where the domain is a quasiprojective variety (cf.~\cite{BDDM}).   The results of this paper play a crucial role in subsequent papers, for example \cite{DW24b} and \cite{DMW24}.  Furthermore, these findings have potential applications in the study of rigidity phenomena for the mapping class group of surfaces, which will be explored in future work. 

	\subsection{Main results}
	We first establish the following existence theorem of equivariant pluriharmonic maps to Bruhat-Tits buildings. 
	\begin{thmx}\label{main:harmonic}
		Let $X$ be a smooth quasi-projective variety and let $G$ be a reductive group defined over a non-archidemean local field $K$. Let $\Delta(G)$ be  the \emph{enlarged} Bruhat-Tits building of $G$. Denote by  $\pi_X:\widetilde{X}\to X$ the universal covering map.   If $\varrho:\pi_1(X)\to G(K)$ is a Zariski dense representation, then the following statements hold:
		\begin{thmenum}
			\item  \label{item:existence}  There exists a $\varrho$-equivariant pluriharmonic map $\tilde{u}:\widetilde{X}\to \Delta(G)$ with  logarithmic energy growth. 
			\item \label{item:harmonic}$\tilde{u}$ is harmonic with respect to any K\"ahler metric on $\widetilde{X}$.  	
			\item  \label{item:pullback}Let $f: Y \to X$ be a morphism from a smooth quasi-projective variety $Y$. Denote by $\tilde{f}: \widetilde{Y} \to \widetilde{X}$   the lift of $f$ between the universal covers of $Y$ and $X$.  Then the $f^*\varrho$-equivariant map $\tilde{u }\circ \tilde{f}: \widetilde{Y} \to \Delta(G)$ is   pluriharmonic and has logarithmic energy growth. 
			\item \label{item:singular}There is a proper Zariski closed subset $\Xi$ of $X$ such that the singular set $\cS(\tilde{u})$ of $\tilde{u}$ defined in \cref{def:sing} is contained in $\pi_X^{-1}(\Xi)$.  
		\end{thmenum}
	\end{thmx}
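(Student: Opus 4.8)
The plan is to show that $\cS(\tilde u)$ is a complex analytic subvariety of $\widetilde X$ whose image $S$ in $X$ has closure, in a smooth projective compactification, that is again analytic --- hence algebraic by Chow's theorem, and of positive codimension; that algebraic set, intersected with $X$, will be the desired $\Xi$. First I would record three inputs from the Gromov--Schoen regularity theory and its extension to this setting (cf.~\cite{GS92,BDDM}): (a) $\tilde u$ is locally Lipschitz; (b) by $\varrho$-equivariance $\cS(\tilde u)$ is a closed $\pi_1(X)$-invariant subset of $\widetilde X$ of Hausdorff codimension $\geq 2$; (c) every $\tilde x\notin\cS(\tilde u)$ has a neighborhood $U$ on which $\tilde u$ takes values in a single apartment $gA$, $g\in G(K)$, where $A$ is the apartment of a fixed maximal $K$-split torus $T$ of $G$, a real affine space carrying an action of the relative Weyl group $W$. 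It then suffices to produce a proper Zariski closed $\Xi\subseteq X$ with $S:=\pi_X(\cS(\tilde u))\subseteq\Xi$, since then $\cS(\tilde u)=\pi_X^{-1}(S)\subseteq\pi_X^{-1}(\Xi)$.

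Next I would explain why $\cS(\tilde u)$ is complex analytic. On a regular neighborhood $U$ as in (c) the affine coordinates of $\tilde u$ along $gA$ are pluriharmonic functions, so by pluriharmonicity the $(1,0)$-part $\partial\tilde u|_U$ is a holomorphic $1$-form valued in the complexified cotangent space of $A$, canonically defined there modulo the action of $W$ and the replacement of $g$ within its $W$-coset. Passing to a system of homogeneous $W$-invariant polynomials $I_1,\dots,I_\ell$ of degrees $d_1,\dots,d_\ell$ removes these ambiguities: the locally defined forms $I_j(\partial\tilde u)$ are $W$-invariant, hence glue to single-valued holomorphic sections $\sigma_j\in\Gamma(X\setminus S,\Sym^{d_j}\Omega^1_X)$, and by the local Lipschitz bound together with $\codim_{\mathbb R}S\geq 2$ each $\sigma_j$ extends holomorphically across $S$ to all of $X$. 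The pluriharmonic refinement of the Gromov--Schoen local analysis then shows that near every point the singular set is cut out by the degeneration of this holomorphic spectral data; consequently $\cS(\tilde u)$ is a complex analytic subvariety of $\widetilde X$, which, being $\pi_1(X)$-invariant, descends to a closed analytic subvariety $S\subsetneq X$ of codimension $\geq 2$.

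Finally I would push this across the boundary. Choose a smooth projective compactification $\overline X$ of $X$ with $D=\overline X\setminus X$ a simple normal crossing divisor. Using the logarithmic energy growth of $\tilde u$ from \cref{item:existence} together with the resulting control on $\tilde u$ near $D$, the sections $\sigma_j$ extend to sections of $\Sym^{d_j}\Omega^1_{\overline X}(\log D)$, and the topological closure $\overline S$ of $S$ in $\overline X$ is a complex analytic subvariety; it has positive codimension since $\dim\overline S=\dim S<\dim\overline X$. By Chow's theorem $\overline S$ is Zariski closed in $\overline X$, so $\Xi:=\overline S\cap X$ is a proper Zariski closed subset of $X$ containing $S$, and $\cS(\tilde u)\subseteq\pi_X^{-1}(\Xi)$, which is what we wanted.

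I expect the genuine obstacle to be this last step, the behavior along $D$: one must prove that the holomorphic spectral sections $\sigma_j$ extend across $D$ with at worst logarithmic poles and that $\overline S$ remains analytic there. The logarithmic energy growth of \cref{item:existence} is precisely the input this requires, but turning it into the needed boundary asymptotics for $\tilde u$ is the delicate point. By contrast, the analysis away from $D$ is purely local and follows the scheme of \cite{BDDM,GS92}; there the essential feature is that pluriharmonicity --- not merely harmonicity --- forces the $(1,0)$-parts of $\tilde u$ to be holomorphic, which is exactly what makes $\cS(\tilde u)$ an \emph{analytic} set rather than just a Hausdorff-codimension-$2$ closed set.
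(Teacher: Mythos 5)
Your outline has the right general shape---turn the pluriharmonic map into holomorphic spectral data on $\overline{X}$, and bound the singular set by an algebraic degeneracy locus---but the step you dispose of in one sentence, namely that ``near every point the singular set is cut out by the degeneration of this holomorphic spectral data,'' is the entire content of the theorem, and as you have set it up it is not correct. Two problems. First, the invariants you propose to use, the $W$-invariant polynomials $\sigma_j=I_j(\partial\tilde u)$ (equivalently the coefficients of the characteristic polynomial of the multivalued $1$-form), have the wrong degeneracy locus: their common vanishing detects where \emph{all} the local forms vanish, whereas regularity of $u$ at a point is governed by the \emph{rank} of the system of local forms. The paper instead forms the $M$-th exterior power $\eta^M$, a multivalued section of $\Omega^M_{\overline X}(\log\Sigma)$, where $M$ is the maximal rank of the Euclidean factor $u^1$ in the local Gromov--Schoen splitting $u=(u^1,u^2)$ (\cref{dfn:M}, \cref{lem:5}), and takes $\Xi$ to be the locus where the support of $\eta^M$ degenerates to the zero section. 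Second, the assertion that this (or any) degeneracy locus \emph{equals} $\cS(\tilde u)$, so that $\cS(\tilde u)$ is analytic, is stronger than what is true or needed: $\cS(u)$ is only known to be closed of Hausdorff codimension $\ge 2$, and the theorem asserts containment in a Zariski closed set, not analyticity. The actual proof shows $\cS(u)\subset\tilde{\cS}(u)=X\setminus X^\circ$ (\cref{lem:compl} and \cref{claim:ord2}), and establishing $\tilde{\cS}(u)=X\setminus X^\circ$ requires the order function ${\rm Ord}^u$, the monotonicity formulas, the blow-up analysis giving the splitting $u=(u^1,u^2)$ with ${\rm Ord}^{u^2}>1$, the equivalence ${\rm Ord}^u(x_0)>1\Leftrightarrow|\nabla u|^2(x_0)=0$ (\cref{ord1}), and a careful comparison, via orthogonal changes of frame, of the locally defined forms with the globally defined multivalued form. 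None of this is present in your sketch.

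You have also misplaced the difficulty. The extension of the spectral data across $\Sigma$ with logarithmic poles is the part that is already available from the logarithmic energy growth together with the Hartogs-type argument of \cref{lem:5} and \cref{prop:spectral}; it is routine here. The delicate point is the interior local analysis relating the analytically defined singular/critical sets to the vanishing of $\eta^M$. Finally, even granting analyticity of $S$ in $X$, passing to $\overline S$ and invoking Chow would require a Bishop--Remmert--Stein extension argument that you do not supply; the paper avoids this entirely because $\eta^M$ is already defined on the compactification, so its degeneracy locus is Zariski closed from the start.
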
 
	Note that  when $X$ is a compact K\"ahler manifold,  \cref{item:existence,item:harmonic,item:pullback} were established by Gromov-Schoen in \cite{GS92}  and \cref{item:singular} was proved by Eyssidieux in \cite{Eys04}.  In the case  where $G$ is semisimple, \cref{item:existence,item:harmonic,item:pullback} were proven by the authors with   Brotbek and Daskalopoulos  in \cite[Theorem A]{BDDM}. 
	
	In general, the uniqueness of the equivariant pluriharmonic map in \cref{main:harmonic} is not guaranteed, although it can be established under additional assumptions on the representation (cf.~\cite{DMunique,BDDM}). However, we   prove the uniqueness in a suitable local setting over a dense open subset of $\widetilde{X}$ that has full Lebesgue measure.  
	\begin{thmx}\label{main:unicity}
		Let $X$ be a smooth quasi-projective variety and let $G$ be a reductive group defined over a non-archidemean local field $K$.  For a representation $\varrho:\pi_1(X)\to G(K)$, if  $\tilde{u}_0,\tilde{u}_1:\widetilde{X}\to \Delta(G)$ are  two $\varrho$-equivariant pluriharmonic maps with  logarithmic energy growth, then for \emph{almost every} point $x\in \widetilde{X}$, it has an open neighborhood   $\Omega$ such that 
		\begin{thmenum}
			\item there exists  an apartment $A$ of $\Delta(G)$ that contains both $\tilde{u}_0(\Omega)$ and $\tilde{u}_1(\Omega)$;
			\item  \label{item:translation}  The map $  \tilde{u}_0|_{\Omega}:\Omega\to A$ is a translate of $  \tilde{u}_1|_{\Omega}:\Omega\to A$.  
		\end{thmenum}
	\end{thmx}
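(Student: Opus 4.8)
The plan is to study the two maps near their regular points, where each restricts to an honest pluriharmonic map into a single flat apartment, to show that their distance function is a bounded plurisubharmonic function on $X$ hence constant, and finally to run a Bochner-type argument — in the strengthened form available for pluriharmonic (as opposed to merely harmonic) maps — to upgrade ``constant distance'' into ``common apartment and translation''. \emph{Step 1: regular points and the distance function.} By the Gromov--Schoen regularity theory (in the quasi-projective form used in this paper), the singular sets $\cS(\tilde u_0)$ and $\cS(\tilde u_1)$ are closed of real codimension $\ge 2$; hence $\cS := \cS(\tilde u_0)\cup\cS(\tilde u_1)$ is closed of codimension $\ge 2$, so it has Lebesgue measure zero and $\widetilde X\setminus\cS$ is connected. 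For $x\in\widetilde X\setminus\cS$ choose a connected neighbourhood $\Omega$ and apartments $A_0,A_1$ of $\Delta(G)$ with $\tilde u_i(\Omega)\subset A_i$; identifying $A_i$ with a Euclidean space, $\tilde u_i|_\Omega$ becomes a smooth map whose affine components are pluriharmonic. Set $\sigma:=d(\tilde u_0,\tilde u_1)$. Restricting the two maps to any germ of a holomorphic curve in $\widetilde X$ gives two harmonic maps into the NPC space $\Delta(G)$, so $\sigma$ is subharmonic there by the convexity of the distance function of an NPC space; therefore $\sigma$, and hence $\sigma^2$, is plurisubharmonic on all of $\widetilde X$.

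\emph{Step 2: $\sigma$ is constant.} Since $\tilde u_0,\tilde u_1$ are $\varrho$-equivariant and $\varrho$ acts by isometries, $\sigma$ is $\pi_1(X)$-invariant and descends to a plurisubharmonic function on $X$. One shows $\sigma$ is bounded: off a neighbourhood of a normal crossing boundary divisor $D$ of a smooth projective compactification $\bar X$ this is compactness, while near a component $D_i$ with local monodromy $\gamma_i$, the logarithmic energy growth of $\tilde u_0$ and of $\tilde u_1$ together with $\varrho$-equivariance forces both maps to approach $D_i$ with the same leading asymptotics, dictated by the translation data of $\varrho(\gamma_i)$ on $\Delta(G)$; consequently $\sigma$ stays bounded near $D_i$. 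A bounded plurisubharmonic function on $X$ extends to a plurisubharmonic function on the projective variety $\bar X$, which is therefore constant by the maximum principle. Write $\sigma\equiv c\ge 0$.

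\emph{Step 3: from constant distance to a translation.} If $c=0$ then $\tilde u_0=\tilde u_1$ and we are done, so assume $c>0$ and fix a regular point $x$ with data $\Omega,A_0,A_1$ as above. Apply the second-variation / Bochner identity for the pair of harmonic maps $\tilde u_0|_\Omega,\tilde u_1|_\Omega$ along the germ of each holomorphic curve, and then use pluriharmonicity to pass to the $\partial\bar\partial$-version in the spirit of Siu; because $\sigma\equiv c$, all the non-negative terms of this identity vanish. The vanishing of the harmonic term already forces the geodesic interpolation $y\mapsto[\tilde u_0(y),\tilde u_1(y)]$ to be by segments of constant length $c$ with parallel directions, while the vanishing of the extra pluriharmonic term rules out the ``folding'' configurations (which satisfy constant distance but are not translates) and shows that the interpolation field is holomorphically parallel. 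Hence $\bigcup_{y\in\Omega}[\tilde u_0(y),\tilde u_1(y)]$ is a genuine flat strip inside a single apartment $A$ of $\Delta(G)$ — here the reduction to regular points, where the maps land in actual apartments, is used — and $A\supset\tilde u_0(\Omega)\cup\tilde u_1(\Omega)$, giving (i), with $\tilde u_0|_\Omega$ equal to the translate of $\tilde u_1|_\Omega$ by the constant displacement vector of the strip, giving \cref{item:translation}. Since $\widetilde X\setminus\cS$ has full measure, the theorem follows.

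\emph{Main obstacle.} Two points carry the weight. First, the boundedness of $\sigma$ near $D$ in Step 2: it rests on a precise asymptotic description of an equivariant pluriharmonic map of logarithmic energy growth in a punctured-polydisc neighbourhood of $D_i$ in terms of $\varrho(\gamma_i)$ (translation length and the geometry of its minimal set in $\Delta(G)$), and on showing that the transverse discrepancy of $\tilde u_0$ and $\tilde u_1$ along parallel axes in a possibly unbounded minimal set is nonetheless uniformly controlled. Second, the local rigidity in Step 3: producing the right strengthened Bochner identity for \emph{two} pluriharmonic maps and extracting from the equality case that the interpolation stays in a single apartment, i.e. that constancy of $\sigma$ really does exclude the folding configurations; this is exactly where pluriharmonicity (rather than mere harmonicity) and the apartment/wall structure of $\Delta(G)$ are indispensable.
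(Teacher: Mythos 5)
Your skeleton (show $d(\tilde u_0,\tilde u_1)$ is constant, then upgrade locally to a common apartment and a translation) is the same as the paper's, but both of your load-bearing steps are replaced by arguments that are not carried out, and in each case the missing piece is precisely the hard content. For the constancy of $\sigma=d(\tilde u_0,\tilde u_1)$, the paper (\cref{prop:uv}) does \emph{not} argue via plurisubharmonicity and boundedness near the boundary; it inducts on dimension, slicing $X$ by generic hyperplane sections $Y_s$ from the Bertini-type \cref{prop:Bertini}. Surjectivity of $\pi_1(Y_s)\to\pi_1(X)$ (\cref{item:Lefschetz}) guarantees the restricted maps are still $\varrho$-equivariant pluriharmonic with logarithmic energy growth, and \cref{item:tangent} lets one connect any two points (and realize any tangent vector) through such slices, so everything reduces to the curve case, which is quoted from \cite{DMrs}. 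Your alternative needs $\sigma$ to be bounded near the boundary divisor, and the claim that logarithmic energy growth ``forces both maps to approach $D_i$ with the same leading asymptotics'' is exactly the one-variable uniqueness statement in disguise (compare the computation in Lemma 3.1, where logarithmic energy growth is what kills the real part $b_1$ of the residue; without that, $\sigma$ genuinely blows up like $|\log|z||$). So Step 2 as written assumes what it must prove; if you want this route you must supply the asymptotic analysis, and for a general building target that is substantially harder than for $\bR$-targets.

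The second gap is in Step 3. The paper's mechanism for producing a \emph{single} apartment is \cref{omegastar}: for a pluriharmonic map, almost every point has a neighborhood mapping into the closure of one chamber (proved by unique continuation for the pluriharmonic component functions, showing the preimage of the $(k-1)$-skeleton has measure zero). Once $u_0$ and $u_1$ each land in a chamber closure near $x$, the building axioms give one apartment $A$ containing both chambers, and the translation claim is the elementary identity $0=\Delta|\nu\circ u_0-\nu\circ u_1|^2=2|\nabla(\nu\circ u_0-\nu\circ u_1)|^2$ — no Bochner formula for pairs of pluriharmonic maps is needed. Your flat-strip/Bochner argument is only sketched, and the assertion that ``the extra pluriharmonic term rules out the folding configurations'' is precisely the statement requiring proof: the NPC flat-strip theorem produces a flat strip through each pair of image points, but nothing in that theorem places the strip, let alone the two image germs, inside one apartment of $\Delta(G)$. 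Without an argument at the level of the simplicial structure (which is what \cref{omegastar} supplies), constancy of $\sigma$ alone does not exclude configurations where the two images fold across walls. You correctly identified both obstacles, but neither is overcome in the proposal.
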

	It is worthwhile mentioning that \cref{main:harmonic,main:unicity} were established by Corlette-Simpson \cite{CS08} for the case where $G={\rm PSL}_2$ and the representation  $\varrho$ has quasi-unipotent monodromies  at infinity.  In this setting,  the Bruhat-Tits building of $G$ is a tree, and the energy of the $\varrho$-equivariant pluriharmonic map $\tilde{u}$ is proven to be finite. 
	
	Finally, we   give a geometric  characterization of pluriharmonic maps with logarithmic energy growth in terms of spectral covers. 
	\begin{thmx}\label{main:spectral}
		Let  $X$, $\varrho$, $G$ and $\tilde{u}$ be as in \cref{main:harmonic}. Let $\overline{X}$ be a smooth projective compactification of $X$ such that $\Sigma:=\overline{X}\backslash X$ is a simple normal crossing divisor.   Then  we have  the following:
		\begin{thmenum}
			\item The $\varrho$-equivariant pluriharmonic map $\tilde{u}$ induces a multivalued logarithmic 1-form $\eta$ on the log pair $(\overline{X},\Sigma)$, satisfying the properties in \cref{item:ortho}. 
			\item \label{item:unicity}Such $\eta$ does not depend on the choice of $\tilde{u}$; i.e., if $\tilde{v}$ is another $\varrho$-equivariant pluriharmonic map with logarithmic energy growth, the multivalued logarithmic 1-form induced by $\tilde{v}$ is $\eta$.  
			\item There exists a ramified Galois cover  $\pi:\overline{\xsp}\to \overline{X}$   such that  $\pi^*\eta$ becomes single-valued; i.e., $\pi^*\eta= \{\omega_1,\ldots,\omega_m\}\subset H^0(\overline{\xsp},\pi^*\Omega_{ \overline{X}}(\log \Sigma))$. 
			\item \label{item:pure} Denote by   $\Sigma_1:=\overline{\xsp}\backslash \xsp$.  Let $\mu:\overline{Y}\to \overline{\xsp}$ be a  log resolution of $(\overline{\xsp},\Sigma_1)$, with $\Sigma_Y:=\mu^{-1}(\Sigma_1)$ a simple normal crossing divisor.   Then   $\{\mu^*\omega_1,\ldots,\mu^*\omega_m\}\in H^0(\overline{Y},\Omega_{ \overline{Y}}(\log \Sigma_Y))$ are  \emph{pure imaginary}, i.e.,  the residue of every $
			\mu^*\omega_j
			$  at    each irreducible component  of $\Sigma_Y$  is a pure imaginary number.  
		\end{thmenum}
	\end{thmx}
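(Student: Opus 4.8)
The plan is to define $\eta$ as a suitable globalization of the $(1,0)$-part $\partial\tilde u$ on the regular locus, to extend it across the bad loci, and then to read off the residue statement from the behaviour of $\varrho$ at infinity. First, by \cref{item:singular} together with the Gromov--Schoen regularity theorem \cite{GS92}, the singular set $\cS(\tilde u)$ is an analytic subset of $\widetilde X$ of codimension $\ge 2$; on its complement $\tilde u$ is locally a pluriharmonic map into a single apartment $A\subset\Delta(G)$, and $A$ is an affine space under $V:=X_*(S)\otimes\mathbb R$ for a maximal split torus $S$. In such an apartment chart $\partial\tilde u$ is a $V\otimes\mathbb C$-valued holomorphic $1$-form, and the decisive observation is that two overlapping apartment charts differ by an element of the \emph{affine} Weyl group $W\ltimes X_*(S)$, whose translation part is annihilated by $\partial$. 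Hence $\partial\tilde u$ patches to a holomorphic $1$-form $\eta$ with coefficients in the local system $\mathcal V$ over $X\setminus\pi_X(\cS(\tilde u))$ with fibre $V\otimes\mathbb C$ and monodromy in the \emph{finite} group $W$; it is this finiteness that makes a finite spectral cover possible. Since the bad locus has codimension $\ge 2$, Hartogs extends $\eta$ over it, and along a component of $\Sigma$ with transverse coordinate $z$, writing $\tilde u=\operatorname{Re}\vec{h}$ with $\vec{h}$ a (multivalued) $V\otimes\mathbb C$-valued holomorphic function, the logarithmic energy bound $\int_{|z|>\varepsilon}|\nabla\tilde u|^2\lesssim\log(1/\varepsilon)$ forces $|\vec{h}'|\lesssim|z|^{-1}$, so $\vec{h}=\vec c\log z+(\text{bounded})$ and $\eta=\tfrac12 d\vec{h}$ has at worst a logarithmic pole along $\Sigma$. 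This yields the multivalued logarithmic $1$-form of part~(i); the properties recorded in \cref{item:ortho} are obtained by rewriting pluriharmonicity ($\partial\bar\partial\tilde u=0$, so $\eta$ is $d$-closed with $\bar\eta$ its complex conjugate) together with the wall structure of the apartments in terms of $\eta$ and its branches.

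Part~\cref{item:unicity} is then immediate from \cref{main:unicity}: if $\tilde u_0,\tilde u_1$ are two such maps with induced forms $\eta_0,\eta_1$, then for almost every $x\in\widetilde X$ there is a neighbourhood $\Omega$ and an apartment $A$ containing $\tilde u_0(\Omega)$ and $\tilde u_1(\Omega)$ with $\tilde u_0|_\Omega=\tilde u_1|_\Omega+v$ for a fixed $v\in V$; computing $\partial$ in the chart $A$ gives $\eta_0|_\Omega=\eta_1|_\Omega$, and since this holds on a dense open set, $\eta_0=\eta_1$ everywhere. For part~(iii), let $\Gamma\subseteq W$ be the image of the monodromy representation $\pi_1(X)\to W$ of $\mathcal V$, let $\xsp\to X$ be the corresponding $\Gamma$-Galois \'etale cover, and let $\overline{\xsp}$ be the normalization of $\overline X$ in the function field of $\xsp$. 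On $\overline{\xsp}$ the pulled-back local system $\pi^*\mathcal V$ is trivial, so $\pi^*\eta\in(V\otimes\mathbb C)\otimes H^0\!\bigl(\overline{\xsp},\pi^*\Omega_{\overline X}(\log\Sigma)\bigr)$; writing it out in its branches (equivalently, in a basis of $V\otimes\mathbb C$) produces single-valued logarithmic $1$-forms $\omega_1,\dots,\omega_m$, permuted and mixed by $\Gamma$ through the $W$-action.

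The core of the theorem is part~\cref{item:pure}. After passing to the log resolution $\mu:\overline Y\to\overline{\xsp}$ each $\mu^*\omega_j$ is an honest logarithmic $1$-form on the simple normal crossing pair $(\overline Y,\Sigma_Y)$, and since $\mu^*\omega_j$ is $d$-closed its residue $r_j:=\Res_{D_Y}(\mu^*\omega_j)$ at a component $D_Y$ of $\Sigma_Y$ is a constant. Locally $\mu^*\omega_j=\partial f$ with $f$ a branch of an apartment coordinate of $\tilde u$ --- a real pluriharmonic function, multivalued only through the translations by which $\varrho$ acts on the apartment --- and expanding $f=\operatorname{Re}(\vec c\,\log z)+(\text{bounded})$ in a transverse coordinate $z$ gives $r_j=\tfrac12(\vec c)_j$, where $\operatorname{Im}(\vec c)$ is $-\tfrac1{2\pi}$ times the translation vector of $\varrho$ along $D_Y$ and $\operatorname{Re}(\vec c)$ is the direction of the \emph{radial} logarithmic growth of $\tilde u$ along $D_Y$. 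Thus every $r_j$ is purely imaginary precisely when $\tilde u$ has no radial logarithmic growth transverse to $\Sigma$: near $\Sigma$ the map may only ``wind'' (the $\arg z$-term, forced by the monodromy), not ``escape'' (a $\log|z|$-term). One may reduce a little further: a $\log|z|$-term is invariant under the local monodromy, so the component of $\tilde u$ transverse to the monodromy-fixed subspace carries none automatically, and it remains to show that the monodromy-invariant part of $\tilde u$ has vanishing flux around every loop at infinity.

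This last point --- $\operatorname{Re}(\vec c)=0$, equivalently vanishing flux --- is the main obstacle; it is a genuine fact about the asymptotics of $\tilde u$ near $\Sigma$, not a formal consequence of the earlier steps (the logarithmic energy bound by itself tolerates a nonzero radial term). I would prove it by matching $\tilde u$ with its monodromy model: in a local product $D'\times\Delta^*$ transverse to a component $D$ of $\Sigma$, subtract from $\tilde u$ the model $\tfrac1{2\pi}(\arg z)\,\vec{\tau}_D$ built from the translation vector $\vec{\tau}_D$ of $\varrho(\gamma_D)$ on the apartment, and show --- using harmonicity with respect to a Poincar\'e-type K\"ahler metric (\cref{item:harmonic}), the convexity of the $\mathrm{CAT}(0)$ target, and the construction of $\tilde u$ in \cref{main:harmonic} (see \cite{BDDM}) --- that the difference has \emph{finite} energy, hence extends across $D$ and is bounded; the mechanism here is that any radial $\log|z|$-term would strictly raise the leading coefficient of the logarithmically divergent energy, contradicting the energy-minimality built into the construction of $\tilde u$. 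Then $\operatorname{Re}(\vec c)=0$ and \cref{item:pure} follows. (When $X$ is already projective the assertion along $\Sigma$ is vacuous and one recovers \cite{GS92,Eys04}; when $G=\mathrm{PSL}_2$ and $\varrho$ has quasi-unipotent local monodromies, $\vec{\tau}_D=0$ as well, so $r_j=0$ and the energy is finite, in agreement with Corlette--Simpson \cite{CS08}.)
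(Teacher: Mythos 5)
Your construction of $\eta$ from $\partial\tilde u$ in apartment charts, the reduction of part (ii) to \cref{main:unicity}, and the finite (Weyl-type) monodromy cover for part (iii) all run parallel to the paper's argument (the paper packages the gluing via the elementary symmetric functions and the zero scheme of the characteristic polynomial in \cref{prop:spectral}, rather than via a local system, but this is a presentational difference). The genuine gap is in part \cref{item:pure}, and you have located it yourself: the vanishing of the real part of the residue. Your proposed mechanism does not close it. First, you appeal to ``the energy-minimality built into the construction of $\tilde u$,'' but the theorem quantifies over \emph{any} $\varrho$-equivariant pluriharmonic map of logarithmic energy growth; the only available hypothesis is the two-sided bound \eqref{ondisks3}, whose leading coefficient is pinned to $L_\gamma^2/2\pi$. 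Second, your model map $\tfrac{1}{2\pi}(\arg z)\,\vec\tau_D$ presupposes that the local monodromy $\varrho(\gamma_D)$ stabilizes an apartment and acts on it by translation, which is not given for a general isometry of a building; and the finite-energy claim for the difference is asserted, not proved.

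What is actually needed --- and what the paper supplies --- is an \emph{upper} bound for the translation length by the angular flux alone: writing $g^*\mu^*\omega_i=(a_i+\sqrt{-1}\,b_i)\,d\log z$ on a transverse disk, the energy computation gives $E^{v_g}[\bD_{r,1}]\sim 8\pi\sum_i(a_i(0)^2+b_i(0)^2)\log\tfrac1r$, so \eqref{ondisks3} forces $L_\gamma^2=16\pi^2\sum_i(a_i(0)^2+b_i(0)^2)$; independently, integrating $d(u(p),\varrho(\gamma)u(p))\le \operatorname{length}(u\circ\ell_r)$ over circles $\ell_r$ that avoid the pullback of the singular set (which exist for a.e.\ $r$ because that pullback has Hausdorff dimension $0$) yields $L_\gamma^2\le 16\pi^2\sum_i b_i(0)^2$. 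Comparing the two kills $a_i(0)$. Without this second inequality (or a proved substitute), the logarithmic energy growth alone is compatible with a nonzero radial term, exactly as you observe, so your argument for \cref{item:pure} is incomplete as it stands.
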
 
	We mention   that \cref{item:pure}
	is analogous to Mochizuki’s notion of   \emph{pure imaginary harmonic bundles}  induced by pluriharmonic maps to symmetric spaces associated with complex semisimple local systems over quasi-projective varieties (cf. \cite{Moc07}).  In our case, however, a spectral cover is required to transform the multivalued logarithmic 1-form induced by the pluriharmonic map into  single-valued logarithmic  1-forms. 
	 
	 In this paper, we assume that $K$ is a non-archimedean local field endowed with a \textit{discrete} non-archimedean valuation. On the other hand, a recent paper by C.~Breiner, B.~Dees, and the second author \cite{BDM24} introduces techniques to study the case for a general non-archimedean valuation local field $L$. In the sequel, we will show that \cref{main:harmonic}, \cref{main:unicity}, and \cref{main:spectral} generalize to the case where $\Delta(G)$  is a Bruhat-Tits building associated with a reductive group defined over any non-archimedean  field. 
	\subsection{Notation and Convention}\label{sec:notation}
	\begin{enumerate}[label=(\alph*)]
		\item Unless otherwise specified,  algebraic varieties are assumed to be connected and   defined over the field of complex 
		numbers.
		\item Let $G$ be a reductive group defined over a non-archimedean local field $K$. We denote by $\Delta(G)$ the Bruhat-Tits building of $G$, which is a non-positively curved (NPC for short) space. Denote by  $d(\bullet,\bullet)$ the distance function on $\Delta(G)$. Denote by $\sD G$ the derived group of $G$, which is semisimple. 
		\item For a complex space $X$, denote by $X^{\rm norm}$ the normalization of $X$.
		\item   A \emph{log smooth pair} $(\overline{X},\Sigma)$ consists of a   smooth projective variety $\overline{X}$ and a simple normal crossing divisor $\Sigma$.  We  denote by $X:=\overline{X}\backslash \Sigma$, and    $\pi_X:\widetilde{X}\to X$ the universal covering map. 
		\item Say a function $\tilde{f}$  (resp. a 1-form $\tilde{\eta}$) on $\widetilde{X}$  descends on $X$ if there exists a function $ {f}$ (resp. a 1-form $ {\eta}$) on $ {X}$   such that $\tilde{f}=\pi_X^*f$ (resp. $\tilde{\eta}=\pi_X^*\eta$).
		\item Let $\overline{X}$ be a smooth projective variety. A line bundle $L$ on $\overline{X}$ is \emph{sufficiently
			ample} if there exists a projective embedding $\iota:\overline{X}\hookrightarrow \bP^N$  such that $L=\iota^*\sO_{\bP^N}(d)$ for some $d\geqslant 3$.
		\item A linear representation $\varrho:\pi_1(X)\to \GL_N(K)$ with $K$ some field is called \emph{reductive} if the Zariski closure of $\varrho(\pi_1(X))$ is a reductive algebraic group over $\overline{K}$.
		
		If
		$Y$ is a closed smooth subvariety of $X$, we denote by $\varrho_Y:\pi_1(Y)\to G(K)$    the composition of the natural homomorphism $\pi_1(Y)\to \pi_1(X)$ and $\varrho$.  
		\item Denote by $\bD$ the unit disk in $\bC$, and by $\bD^*$ the punctured unit disk. We write $\bD_r := \{ z \in \bC \mid |z| < r \}$, $\bD^*_r := \{ z \in \bC \mid 0 < |z| < r \}$, and $\bD_{r_1,r_2} := \{ z \in \bC \mid r_1 < |z| < r_2 \}$. 
	\end{enumerate} 
	\subsection*{Acknowledgment} The first author is supported in part by    ANR-21-CE40-0010.	The second author is  supported in part by NSF DMS-2304697.  Part of this research was performed while the second author was visiting the Mathematical Sciences Research Institute (MSRI), now becoming the Simons Laufer Mathematical Sciences Institute (SLMath), which is supported by the National Science Foundation (Grant No. DMS-1928930).     Finally, the second author thanks the Mathematical Society of Japan and the organizers of the 14th MSJ-SI: New Aspects of Teichmüller Theory for the invitation to their conference at the University of Tokyo in July 2022.

	\section{Technical preliminary}
		For more details of this section, we refer the readers to \cite{BDDM}.
	\subsection{NPC spaces and Euclidean buildings}
	For more details, we refer the readers to \cite{bridson-haefliger,Rou09,KP23}. 
	\begin{dfn}[Geodesic  space] Let $(Z,d_Z)$ be a metric space.  A curve $\gamma:[0, \ell] \rightarrow Z$ into $Z$ is called a geodesic if the length $d_Z(\gamma(a),\gamma(b))=b-a$ for any subinterval $[a, b] \subset[0,\ell]$.  A metric space $(Z,d_Z)$ is a \emph{geodesic space} if there exists a geodesic connecting every pair of points in $Z$.
	\end{dfn}
	\begin{dfn}[NPC space]An NPC (non-positively curved) space $(Z,d_Z)$ is a complete geodesic space that satisfies the following condition: for any three points $P,Q,R\in Z$ and a  geodesic $\gamma:[0, \ell] \rightarrow Z$ with $\gamma(0)=Q$ and $\gamma(\ell)=R$, we have
		$$
		d^{2}\left(P, Q_{t}\right) \leq(1-t) d^{2}(P, Q)+t d^{2}(P, R)-t(1-t) d^{2}(Q, R)
		$$
		for any $t\in [0,1]$, where $Q_{t}:=\gamma(t\ell)$.
	\end{dfn}
	A smooth Riemannian manifold with nonpositive sectional curvature is a familiar example of an NPC space. Of particular interest in this paper are the Bruhat-Tits buildings, $\Delta(G)$, associated with semisimple algebraic groups $G$ defined over non-Archimedean local fields $K$. While we do not provide a detailed description of Bruhat-Tits buildings here, we refer interested readers to \cite{Rou09} and \cite{KP23} for precise definitions.
	
	We highlight two key properties of Bruhat-Tits buildings. First, an apartment of $\Delta(G)$ is a maximal flat, Euclidean subcomplex that is isometric to a Euclidean space $\mathbb{R}^N$, where $N = \dim \Delta(G)$. Moreover, $\Delta(G)$ can be expressed as the union of its apartments. Second, the group of $K$-points of $G$, denoted by $G(K)$, acts isometrically on $\Delta(G)$ and transitively on its set of apartments. The dimension of $\Delta(G)$, in turn, corresponds to the $K$-rank of $G$, which is the dimension of a maximal $K$-split torus in the algebraic group $G$.
	
	\subsection{Harmonic maps to NPC spaces}\label{sec:harmonic}
	Let  $f: \Omega\to Z$ be a map from an  $n$-dimensional bounded Lipschitz Riemannian domain.  $(\Omega, g)$ to an NPC space $(Z, d_Z)$.  When $Z$ is a smooth  Riemannian manifold of nonpositive sectional curvature,  the energy of a smooth map $f: \Omega \rightarrow Z$ is
	$$
	E^{f}=\int_{\Omega}|d f|^{2} d\operatorname{vol}_{g}
	$$
	where  $d \operatorname{vol}_{g}$ is the volume form of $\Omega$.   The assumption of nonpositive sectional curvature ensures the convexity of the energy functional. In particular, this implies that a harmonic  map $f: \Omega\to Z$   is locally energy minimizing.  In other words,  for any $p \in \Omega$, there exists $r>0$ such that the restriction $\left.u\right|_{B_{r}(p)}$ minimizes energy amongst all maps $v: B_{r}(p) \rightarrow Z$ with the same boundary values as $\left.u\right|_{B_{r}(p)}$.
	
	We now recall the notion of harmonic maps in the context of NPC space targets, where the target space is not necessarily a manifold. (cf. \cite{KS} for more details).
	Let $\Omega_{\ep}$ be the set of points in $\Omega$ at a distance least $\ep$ from $\partial \Omega$. Let $B_{\ep}(x)$ be a geodesic ball centered at $x$ and $S_{\ep}(x)=\partial B_{\ep}(x)$. We say $f: \Omega \rightarrow Z$ is an $L^{2}$-map (or that $f \in L^{2}(\Omega, Z)$ ) if for some point $P\in \Omega$, we have  
	$$
	\int_{\Omega} d^{2}(f, P) d \mathrm{vol}_{g}<\infty.
	$$
	For $f \in L^{2}(\Omega, Z)$, define
	$$
	e^f_{\ep}: \Omega \rightarrow \mathbb{R}, \quad e^f_{\ep}(x)= \begin{cases}\int_{y \in S_{\ep}(x)} \frac{d^{2}(f(x), f(y))}{\ep^{2}} \frac{d \sigma_{x, \ep}}{\ep^{n-1}} & x \in \Omega_{\ep} \\ 0 & \text { otherwise }\end{cases}
	$$
	where $\sigma_{x, \ep}$ is the induced measure on $S_{\ep}(x)$. We define a family of functionals
	$$
	E_{\ep}^{f}: C_{c}(\Omega) \rightarrow \mathbb{R}, \quad E_{\ep}^{f}(\varphi)=\int_{\Omega} \varphi e^f_{\ep} d\vol_{g} .
	$$
	We say $f$ has finite energy (or that $f \in W^{1,2}(\Omega, Z)$ ) if
	$$
	E^{f}:=\sup _{\varphi \in C_{c}(\Omega), 0 \leq \varphi \leq 1} \limsup _{\ep \rightarrow 0} E_{\ep}^{f}(\varphi)<\infty .
	$$ 
	In this case,  \cite[Theorem 1.10]{KS} asserts that   there exists an absolutely continuous function $e^f(x)$ with respect to   Lebesgue measure,   which we call the \emph{energy density}, such that $e^f_{\ep}(x) d \mathrm{vol}_{g} $ converges weakly to $ e^f(x) d \operatorname{vol}_{g}$ as $\ep$ tends to $0$. In analogy to the case of smooth targets, we write $|\nabla f|^{2}(x)$ in place of $e^f(x)$. Hence $|\nabla f|^{2}(x)\in L^1_{\rm loc}(\Omega)$. In particular, the (Korevaar-Schoen) energy of $f$ in $\Omega$ is
	\begin{align}\label{eq:defenergy}
		E^{f}[\Omega]=\int_{\Omega}|\nabla f|^{2} d \operatorname{vol}_{g} . 
	\end{align}

	\begin{dfn}[Harmonic maps]
		We say a continuous map $f: \Omega \rightarrow Z$ from a Lipschitz domain $\Omega$ is {\it harmonic} if it is locally energy minimizing; more precisely, at each $p \in \Omega$, there exists  
		an open  neighborhood $\Omega_p$ of $p$ such that all  comparison maps which agree with $u$ outside of this neighborhood have no less energy. 
	\end{dfn}

	\subsection{Equivariant maps and sections} \label{sec:ems}
	Endow $X$ with a K\"ahler metric $g$.  Let  $\varrho :\pi_1(X) \rightarrow G(K)$ be a representation where $G$ is a reductive algebraic group over a non-archimedean local field $K$.    
	The set of all $\varrho $-equivariant maps  into $\Delta(G)$ are in one-to-one correspondence with the set of all sections of the fiber bundle
	$
	\Pi: \widetilde{X} \times_{\varrho } \Delta(G)  \rightarrow X
	$.  More precisely, for a $\varrho $-equivariant map $\tilde{f}: \widetilde{X} \rightarrow \Delta(G)$,  we define a  section of $\Pi$ by  setting
	$
	f(\pi_X(\tilde p)) = [(\tilde p, \tilde f(\tilde p))],
	$ 
	where $\tilde{p}$ is any point in $\widetilde{X}$. We shall use this notation throughout this paper.
	
	One can also define the energy density function  $|\nabla \tilde f|^2$  of $\tilde{f}$, and we refer the readers to \cite{KS,BDDM} for the definition. 
	Since $\tilde{f}$ is equivariant,  $|\nabla \tilde f|^2$ on $\widetilde{X}$ is a $\pi_1(X)$-invariant function, and thus  it descends to a function on $X$, denoted by 
	$
	|\nabla f|^2$. We also define  the energy of $f$ in any open subset $U$ of $X$ by setting
	\begin{align}\label{eq:defenergy2}
		E^f[U] = \int_U |\nabla f|^2 d\mbox{vol}_g. 
	\end{align} 
	\subsection{Pullback bundles}\label{sec:pullback}
	Let $f:Y\to X$ be a morphism between smooth quasi-projective varieties. Let $\cC$ be an NPC space, and let $\varrho:\pi_1(X)\to {\rm Isom}(\cC)$ be a homomorphism.  Let $\widehat{Y}$ be a connected component of $\widetilde{X}\times_XY$.  
	Then we have the following commuting diagram:
	\begin{equation*}
		\begin{tikzcd}\widetilde Y \arrow[d, 
			"\pi_{\widehat Y}"]\arrow[dd, bend right=30, "\pi_Y"']
			&
			\\
			\widehat Y \arrow[r, "\hat{f}"] \arrow[d, 
			"\hat \pi_Y"]
			& \widetilde X \arrow[d, "\pi_X" ] \\
			Y  \arrow[r, "f" ]
			& X  \end{tikzcd}
	\end{equation*}
 Here, $\pi_{\widehat{Y}}$ and $\hat{\pi}_Y$ are covering maps, and $\hat{f}$ is the natural holomorphic map defined as the composition $\widetilde{Y} \hookrightarrow \widetilde{X} \times_X Y \to X$. 
 It induces a fiber bundle $\hat{\Pi}_Y:\widehat{Y}\times_{f^*\varrho } \mathcal C\to Y$, 
	%
	such that one has the following commuting diagram:
	\begin{equation*}
		\begin{tikzcd}
			\widehat Y \times_{f^*\varrho } \mathcal C \arrow[r, "F"] \arrow[d, 
			"\hat \Pi_Y"]
			& \widetilde X \times_{\varrho } \mathcal C \arrow[d, "\Pi_X" ] \\
			Y  \arrow[r, "f" ]
			& X. \end{tikzcd}
	\end{equation*}
	By \cref{sec:ems}, a $\varrho$-equivariant map $\tilde{u}:\widetilde{X}\to \cC$ corresponds to a  section  $u: X \rightarrow  \widetilde X \times_\varrho \mathcal C$ of $\Pi_X$. The composition
	\[
	u \circ f:Y \rightarrow \widetilde X \times_\varrho \mathcal C
	\]
	defines a section  
	of the fiber bundle $\widehat Y \times_{f^*\varrho} \mathcal C \simeq f^*(\widetilde X \times_\varrho \mathcal C)\to Y$, which in turn defines a  $ {f}^*\varrho$-equivariant map $\hat u_f: \hat Y \rightarrow \mathcal C$.  Define $\widetilde{u_f}:=\hat{u}_f\circ \pi_{\widehat{Y}}$, which is an  $f^*\varrho$-equivariant map $\widetilde Y \rightarrow \mathcal C$. It  defines a section
	\begin{equation*} 
		u_f: Y \rightarrow \widetilde Y \times_{f^*\varrho} \mathcal C.
	\end{equation*}
	In this paper, we will mainly  focus on the special case where  $Y$ is a closed smooth subvariety of $X$ and  
	$
	\iota: Y  \rightarrow X
	$ 
	is the inclusion map. In this cases,  we will use the notation
	\begin{align}\label{restrictionsection}
		u_Y: Y \rightarrow \widetilde Y \times_{\varrho_Y} \mathcal C.
	\end{align}
	in place of $u_\iota$, where $\varrho_Y:\pi_1(Y)\to {\rm Isom}(\cC)$ denotes  the composition of $\iota_*:\pi_1(Y)\to \pi_1(X)$ and $\varrho$.  Denote by $\widetilde{u_Y}:\widetilde{Y}\to \cC$ the corresponding $\varrho_Y$-equivariant map.   
\subsection{Regularity results of Gromov-Schoen}

Let $X$ be a hermitian manifold and let  $\tilde{u}:\widetilde{X}\to \Delta(G)$ be a $\varrho$-equivariant harmonic map.  Following  \Cref{sec:pullback},  let  $u:X  \rightarrow \widetilde{X} \times_{\varrho } \Delta(G)$ be the section corresponding to $\tilde{u}$.  We recall some results in \cite{GS92}. 

\begin{dfn}[Locally Lipschitz]
	A continuous map $f:\Omega\to Z$ is called \emph{locally Lipschitz} if for any $p\in \Omega$, there exists   an open  neighborhood $\Omega_p$ of $p$   and a constant $C>0$ such that  $d(f(x), f(y))\leq Cd(x,y)$ for $x,y\in \Omega_p$. 
\end{dfn}
\begin{rem}\label{rem:Litschitz}
It follows from the  definition of $|\nabla f|^2$ that if $f$ is locally Lipschitz, then for any $p\in \Omega$,  there exists    an open  neighborhood $\Omega_p$ of $p$   and a constant $C>0$  such that  over $\Omega_p$ one has
	 $
	 |\nabla f|^2\leq C.
	 $   
\end{rem}
\begin{thm}[\cite{GS92}, Theorem 2.4]\label{thm:Lcon}
	A harmonic map $\tilde{u}:\widetilde X \rightarrow \Delta(G)$ is locally Lipschitz.\qed 
\end{thm}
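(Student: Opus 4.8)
Fix a K\"ahler metric on $\widetilde X$; by a standard scaling/covering argument it suffices to prove the estimate locally, so the plan is to reproduce the two–stage structure of Gromov--Schoen's regularity theory: first continuity, then an \emph{order computation} that upgrades continuity to a Lipschitz bound. Throughout, the two workhorse facts are (i) for the harmonic map $\tilde u$ and any fixed $Q\in\Delta(G)$ the function $x\mapsto d^2(\tilde u(x),Q)$ is weakly subharmonic, indeed $\Delta\tfrac12 d^2(\tilde u,Q)\ge |\nabla\tilde u|^2$, a consequence of the NPC inequality together with harmonicity; and (ii) the Korevaar--Schoen domain- and target-variation formulas. From (i) and the sub-mean-value property one controls $\sup_{B_{r/2}(x_0)}d^2(\tilde u,\tilde u(x_0))$ by $r^{-n}\int_{B_r(x_0)}d^2(\tilde u,\tilde u(x_0))\,d\mathrm{vol}$, which already yields local H\"older continuity of $\tilde u$ (this is \cite[Theorem~2.3]{GS92}); in particular $\tilde u$ is locally bounded and, setting $I_{x_0}(r):=\int_{\partial B_r(x_0)} d^2\big(\tilde u,\tilde u(x_0)\big)\,d\sigma$, one has $I_{x_0}(r)\le C r^{n-1}$ uniformly on compacta.

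Next I would set up the monotonicity formula. With $E_{x_0}(r)=\int_{B_r(x_0)}|\nabla\tilde u|^2\,d\mathrm{vol}$, the variation formulas show that, after the usual metric-correction factor, the normalized energy $\mathrm{Ord}_{x_0}(r):=rE_{x_0}(r)/I_{x_0}(r)$ is monotone non-decreasing in $r$, so the \emph{order} $\alpha(x_0):=\lim_{r\to0^+}\mathrm{Ord}_{x_0}(r)$ exists and $\mathrm{Ord}_{x_0}(r)\ge\alpha(x_0)$ for all small $r$; moreover one gets the differential identity $\frac{d}{dr}\log\big(I_{x_0}(r)/r^{n-1}\big)=\frac{2\,\mathrm{Ord}_{x_0}(r)}{r}\,(1+O(r))$. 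The rescaled maps $\tilde u_{x_0,r}(\cdot):=I_{x_0}(r)^{-1/2}r^{n/2}\,\tilde u(x_0+r\,\cdot)$ then subconverge locally uniformly to a nonconstant \emph{homogeneous} harmonic map of degree $\alpha(x_0)$ from $\mathbb R^n$ into a tangent cone of $\Delta(G)$ at $\tilde u(x_0)$, which is again a Euclidean building (the Euclidean cone over the spherical building $\mathrm{Lk}(\tilde u(x_0))$).

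The heart of the matter, and the step I expect to be the main obstacle, is the \emph{order gap} $\alpha(x_0)\ge1$ at every point — the property that distinguishes Euclidean buildings from arbitrary NPC targets (over a metric cone with small cone angle one can have $\alpha<1$). I would argue through the tangent map $\tilde u_*$ above: its unit-sphere values give a nonconstant eigenmap $\varphi\colon S^{n-1}\to\mathrm{Lk}(\tilde u(x_0))$ with eigenvalue $\alpha(\alpha+n-2)$; because the link is a spherical building, a projection/separation argument — exploiting that a tangent map of small order cannot spread its image across more than one apartment of the cone, i.e. cannot be genuinely branched — forces $\tilde u_*$ to take values in a single apartment $\cong\mathbb R^N$, where it becomes an honest homogeneous harmonic map $\mathbb R^n\to\mathbb R^N$ and hence has integer homogeneity degree; being nonconstant, $\alpha(x_0)\ge1$. (Equivalently, one invokes the bound $\lambda_1\ge n-1$ for the relevant eigenvalue problem on spherical buildings.) Making the ``image in one apartment'' step rigorous is precisely the delicate combinatorial–geometric point to which \cite[\S2]{GS92} is devoted, and I would follow their treatment of $F$-connected complexes there.

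Finally I would deduce the Lipschitz bound from $\alpha(x_0)\ge1$. Since $\mathrm{Ord}_{x_0}(r)\ge1$ for all small $r$ while $\mathrm{Ord}_{x_0}(r)$ stays bounded above on compacta, the differential identity gives $\frac{d}{dr}\log\big(I_{x_0}(r)/r^{n+1}\big)\ge -C$, hence $I_{x_0}(r)\le C\,r^{n+1}$ uniformly for $x_0$ in a fixed compact set and $r$ small. Feeding this into the subharmonicity of $d^2(\tilde u,\tilde u(x_0))$: for $y$ near $x_0$, with $\rho=|y-x_0|$,
\[
d^2\big(\tilde u(y),\tilde u(x_0)\big)\le \frac{C}{\rho^{n}}\int_{B_{2\rho}(x_0)} d^2\big(\tilde u,\tilde u(x_0)\big)\,d\mathrm{vol}=\frac{C}{\rho^{n}}\int_0^{2\rho} I_{x_0}(s)\,ds\le C'\rho^{2},
\]
so $d(\tilde u(y),\tilde u(x_0))\le C''|y-x_0|$ in a neighborhood of $x_0$, which is the asserted local Lipschitz continuity. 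Note the building hypothesis is used only in the third paragraph; the rest works verbatim for any NPC target.
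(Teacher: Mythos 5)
The paper gives no argument for this statement: it is quoted directly from \cite{GS92} with a \verb|\qed|, so the only meaningful comparison is with Gromov--Schoen's own proof. Your first, second and fourth paragraphs reproduce the skeleton of that proof correctly: subharmonicity of $d^{2}(\tilde u,Q)$, the monotonicity of the order function, and the conversion of ${\rm Ord}^{u}(x_0)\ge 1$ into $I_{x_0}(r)\le Cr^{n+1}$ and thence into a pointwise Lipschitz bound via the sub--mean--value inequality. The problem is the third paragraph, which you yourself flag as the heart of the matter.

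The premise there is wrong: the order gap ${\rm Ord}^{u}(x_0)\ge 1$ is \emph{not} what distinguishes Euclidean buildings from general NPC targets. It holds for harmonic maps into an arbitrary NPC space --- it is \cite[Lemma 1.3]{GS92}, quoted as such in \S 4 of the present paper, and interior Lipschitz continuity for arbitrary NPC targets is a theorem of \cite{KS} --- and a metric cone of small cone angle is positively curved rather than NPC, so it is not a counterexample to anything. The standard proof of the order gap is an elementary comparison: replace $u$ on $B_\sigma(x_0)$ by the degree-one geodesic cone over its boundary values (well defined by uniqueness of geodesics in an NPC space), estimate its energy using convexity of the distance function, and let local energy minimality force $\sigma E_{x_0}(\sigma)\ge I_{x_0}(\sigma)$; no building structure is used. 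Worse, the route you propose instead is circular: the locally uniform subconvergence of the blow-ups $u_\sigma$ to a nonconstant homogeneous tangent map is \cite[Proposition 3.3]{GS92}, whose proof uses the uniform Lipschitz bounds of Theorem 2.4 --- the very statement being proved --- to obtain equicontinuity of the rescaled maps; the H\"older estimate of Theorem 2.3 does not obviously substitute, since the normalization $\mu(\sigma)^{-1}\sim \sigma^{\alpha(x_0)}$ while the available H\"older exponent is only an a priori lower bound for the order. Finally, the ``projection/separation argument on the spherical building'' to which you defer is not in \cite[\S 2]{GS92}: $F$-connectedness enters only in \S 6 of that paper, for the structure of the singular set, not for Lipschitz regularity. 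So the key step of your proof rests on a nonexistent reference and a circular compactness claim, even though the surrounding architecture is sound; the fix is to replace the third paragraph by the comparison-map proof of \cite[Lemma 1.3]{GS92}, after which (as you note at the end) the entire argument works verbatim for any NPC target.
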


\begin{dfn}[Regular points and singular points] \label{def:sing}
	A point $x \in \widetilde{X}$ is said to be a {\it regular point} of $\tilde{u}$  if there exists a neighborhood ${\mathcal N}$ of $x$ and an apartment $A \subset \Delta(G)$ such that $\tilde{u}(\mathcal N) \subset A$.  
	A {\it singular point} of $\tilde{u}$ is a point in $\widetilde{X}$ that is not a regular point.   Note that  if $x \in \widetilde{X}$ is a regular point (resp.~singular point) of $\tilde{u}$, then every point of $\pi_X^{-1}(\pi_X(x))$ is a  regular point (resp.~singular point) of $\tilde{u}$.
	We denote by $\mathcal R(\tilde{u})$ (resp.~$\mathcal S(\tilde{u})$) the set of all regular points (resp.~singular points) of $\tilde{u}$ and let $\mathcal R(u)=\pi_X(\mathcal R(\tilde{u}))$ (resp.~$\mathcal S(u)=\pi_X(\mathcal S(\tilde{u}))$). 
\end{dfn}

\begin{lem}[ \cite{GS92}, Theorem 6.4] \label{gs}
	The set ${\mathcal S}(u)$  is  a closed subset of $X$ of  Hausdorff codimension at least two. \qed
\end{lem}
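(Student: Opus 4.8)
### Proof proposal for Lemma~\ref{gs}

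The plan is to reduce the statement to the local analysis of Gromov--Schoen \cite{GS92}, which applies because $\tilde u$ is harmonic (being pluriharmonic, it is harmonic for the given Hermitian metric by \Cref{thm:Lcon} and the surrounding discussion) and because, by \Cref{thm:Lcon}, $\tilde u$ is locally Lipschitz, hence of locally finite energy. The first step is to recall that the regular/singular dichotomy is $\pi_1(X)$-invariant, as already noted in \Cref{def:sing}: a point $x$ is regular for $\tilde u$ iff every point of the fiber $\pi_X^{-1}(\pi_X(x))$ is. Consequently $\cS(\tilde u) = \pi_X^{-1}(\cS(u))$ is a closed, $\pi_1(X)$-invariant subset of $\widetilde X$, and it suffices to prove that $\cS(u)$ is closed in $X$ with Hausdorff codimension $\geq 2$; since $\pi_X$ is a local isometry (for the pulled-back metric), closedness and Hausdorff codimension transfer between $\cS(\tilde u)$ and $\cS(u)$ without loss.

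\textbf{Closedness.} The set $\cR(\tilde u)$ of regular points is open essentially by definition: if $x$ is regular, there is a neighborhood $\mathcal N \ni x$ and an apartment $A$ with $\tilde u(\mathcal N)\subset A$, and then every point of $\mathcal N$ is regular with the same $\mathcal N$ and $A$. Hence $\cS(\tilde u)$ is closed in $\widetilde X$, and therefore $\cS(u) = \pi_X(\cS(\tilde u))$ is closed in $X$ (the quotient map is open and the complement $\cR(u)$ is open).

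\textbf{Codimension bound.} This is the substantive part, and it is exactly the content of \cite[\S6]{GS92}; I would cite it, but indicate the mechanism. Around any point $p$ one works in a small ball of the domain; the harmonic map $\tilde u$ has a well-defined order function (via the monotonicity of the normalized energy on balls, Almgren frequency), and the tangent maps at $p$ are homogeneous harmonic maps into the Euclidean cone over $\Delta(G)$, which because $\Delta(G)$ is a Euclidean building is again (the cone on) a building. The key structural input is that a homogeneous harmonic map of order one into a building whose image is not contained in a single apartment forces, after passing to the effective domain, a splitting; Gromov--Schoen show that the set where the map fails to be regular is, locally, contained in a set of the form $\{F_1 = F_2 = 0\}$ for two (essentially harmonic, or pluriharmonic in the holomorphic setting) functions with independent differentials, or more precisely is covered by countably many sets each of Hausdorff codimension $\geq 2$, via a dimension-reduction (Federer-type) argument on the stratification by order. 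Running this argument verbatim — it is insensitive to whether the domain is compact or merely a Hermitian manifold, since it is purely local — yields that $\cS(u)\cap U$ has Hausdorff codimension $\geq 2$ for every coordinate ball $U$, hence globally.

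\textbf{Main obstacle.} The only point requiring care is that \cite{GS92} is written for maps from (open subsets of) Euclidean or Riemannian domains and with a fixed background metric; here the relevant statement concerns $\widetilde X$ with an arbitrary Hermitian (indeed K\"ahler) metric and the target is the \emph{enlarged} building $\Delta(G)$ rather than the reduced Bruhat--Tits building of the semisimple part $\sD G$. For the first issue, the local Lipschitz regularity (\Cref{thm:Lcon}) and the local nature of the order/tangent-map analysis make the passage routine. For the second, one notes that $\Delta(G) \cong \Delta(\sD G)\times \bR^{r}$ for some $r$ (the Euclidean factor coming from the centre), and the projection to the first factor sends $\tilde u$ to a harmonic map whose singular set is exactly $\cS(\tilde u)$, since the Euclidean factor contributes no singularities and apartments of $\Delta(G)$ are products of apartments of $\Delta(\sD G)$ with $\bR^r$; thus the Gromov--Schoen bound for $\Delta(\sD G)$ applies directly. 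I would make these two reductions explicit and then invoke \cite[Theorem~6.4]{GS92}.
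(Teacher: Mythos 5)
Your proposal is correct and takes essentially the same route as the paper, which proves nothing here but simply quotes \cite[Theorem 6.4]{GS92}: the statement is purely local, and the two reductions you make explicit (closedness from openness of the regular set and descent along $\pi_X$, plus the splitting of the enlarged building as $\Delta(\sD G)\times\bR^r$ with apartments of product form, so the Gromov--Schoen order/tangent-map analysis applies verbatim for any smooth domain metric) are exactly what justifies that citation in this setting. The only cosmetic slip is attributing ``pluriharmonic implies harmonic'' to \cref{thm:Lcon}, which is the Lipschitz-regularity statement; since the lemma is stated for a harmonic map $\tilde u$, this does not affect the argument.
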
 
 
\begin{rem}
B.~Dees \cite{Dee22} improved \cref{gs} to show that ${\mathcal S}(u)$ is $(n-2)$-countably rectifiable where $n$ is the dimension of the domain.
\end{rem} 
 
\subsection{Pluriharmonicity}  

\begin{dfn}[Pluriharmonic maps] \label{def:pluriharmonic}
Let $X$ be  a complex manifold.  A locally Lipschitz map $u:X \rightarrow \Delta(G)$ is said to be \emph{pluriharmonic} if $u \circ \psi$ is harmonic for any holomorphic map $\psi: \bD \rightarrow X$. 
\end{dfn} 
By \cite[Lemma 2.11]{BDDM}, in order to establish the pluriharmonicity of a harmonic map $u$, it is sufficient to verify it over the regular set of $u$. 

\begin{lem}[cf.~\cite{BDDM}] \label{phequiv}
Let $u: U = \mathbb{D}^n \rightarrow \Delta(G)$ be a harmonic map with respect to the standard Euclidean metric on $U = \mathbb{D}^n$.  If  $\partial \bar{\partial} u = 0$ on the regular set $\mathcal{R}(u)$, then $u$ is pluriharmonic as defined in Definition~\ref{def:pluriharmonic}.  
\end{lem}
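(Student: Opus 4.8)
The plan is to reduce the statement to a known regularity dichotomy for harmonic maps into Euclidean buildings, together with the structure of the singular set established in \cref{gs}. Concretely, one wants to show: if $u$ is harmonic on $U=\bD^n$ and $\partial\bar\partial u\equiv 0$ on the regular set $\cR(u)$, then for \emph{every} holomorphic disc $\psi:\bD\to U$ the composition $u\circ\psi$ is harmonic on $\bD$. First I would recall (from \cite{GS92}, as imported in \cite{BDDM}) that near a regular point the map $u$ takes values in a single apartment $A\cong\bR^N$, so $u$ is locally a vector-valued harmonic function there; the hypothesis $\partial\bar\partial u=0$ then says precisely that each coordinate of $u$ is pluriharmonic in the classical sense on $\cR(u)$, hence $u\circ\psi$ is harmonic on $\psi^{-1}(\cR(u))$. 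So the entire content is to propagate harmonicity of $u\circ\psi$ across $\psi^{-1}(\cS(u))$.

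The key step is a capacity/measure argument for the image of the singular set under a holomorphic disc. By \cref{gs}, $\cS(u)$ is closed of Hausdorff codimension $\geq 2$ in $U$, equivalently Hausdorff dimension $\leq 2n-2$. For a nonconstant holomorphic map $\psi:\bD\to U$ the preimage $\psi^{-1}(\cS(u))$ is a closed subset of $\bD$; I would argue that it has zero logarithmic capacity — hence zero $1$-capacity — so it is removable for bounded (indeed, by \cref{thm:Lcon}, locally Lipschitz) harmonic functions. The clean way to see this: either $\psi(\bD)\subset\cS(u)$, in which case one treats that degenerate case separately (I expect it cannot happen when $\varrho$ is Zariski dense, but for the bare statement of the lemma one can instead use that $u$ is continuous and locally energy-minimizing to see $u\circ\psi$ is still harmonic by a limiting argument), or $\psi$ meets $\cR(u)$, and then since $\cS(u)\cap\psi(\bD)$ sits inside a codimension-$\geq 2$ set while $\psi(\bD)$ is a (possibly singular) complex curve, the set $\psi^{-1}(\cS(u))$ is a proper analytic-type subvariety of $\bD$, hence discrete, hence of zero capacity. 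Then $u\circ\psi$ is a continuous map $\bD\to\Delta(G)$ which is harmonic off a polar set and is locally Lipschitz (by \cref{thm:Lcon} and the Lipschitz bound on $\psi$), and one invokes the removable-singularity theorem for harmonic maps into NPC spaces across sets of zero capacity (as in \cite{GS92,KS}) to conclude $u\circ\psi$ is harmonic on all of $\bD$.

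Finally I would assemble these pieces: fix an arbitrary holomorphic $\psi:\bD\to U$; if $\psi$ is constant there is nothing to prove; otherwise by the above $\psi^{-1}(\cS(u))$ is discrete (or the degenerate case is handled directly), $u\circ\psi$ is harmonic on the complement, continuous and locally Lipschitz everywhere, hence harmonic on $\bD$; by \cref{def:pluriharmonic} this means $u$ is pluriharmonic. The main obstacle I anticipate is the bookkeeping around discs $\psi$ whose image is entirely contained in $\cS(u)$ or is tangent to it in a degenerate way — ruling this out (or showing harmonicity of $u\circ\psi$ persists) requires care, and is exactly where one must use more than just the codimension bound, e.g.\ the finer rectifiability of $\cS(u)$ from \cite{Dee22} or the homogeneity/order estimates for harmonic maps into buildings from \cite{GS92}; a secondary technical point is making sure the removable-singularity statement for NPC-valued harmonic maps applies with the regularity ($W^{1,2}$ plus local Lipschitz) that we have in hand, which is standard but should be cited precisely.
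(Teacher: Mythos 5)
The paper does not actually prove this lemma itself---it is imported from \cite{BDDM} (Lemma~2.11 there)---so there is no in-paper argument to compare against, and I will assess your proposal on its own terms. The first part of your plan is fine: near a regular point $u$ maps into a single apartment, the hypothesis $\partial\bar\partial u=0$ makes each coordinate a classical pluriharmonic function, and hence $u\circ\psi$ is harmonic on $\psi^{-1}(\cR(u))$. The gap is in the decisive second step. You assert that for a nonconstant holomorphic $\psi$ the set $\psi^{-1}(\cS(u))$ is ``a proper analytic-type subvariety of $\bD$, hence discrete, hence of zero capacity.'' At this stage $\cS(u)$ is only known to be a \emph{closed set of Hausdorff codimension at least two} (\cref{gs}); it is not known to be analytic. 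Containment of the singular set in a proper Zariski-closed subset is precisely \cref{item:singular}, which this paper proves \emph{later} using pluriharmonicity, so invoking it here would be circular. A closed set of Hausdorff dimension $2n-2$ can meet a holomorphic disc in a set of positive capacity: in $\bC^2$ the totally real plane $\bR^2\subset\bC^2$ has Hausdorff codimension two, yet the disc $z\mapsto(z,0)$ meets it in the interval $\bD\cap\bR$, which is not polar. The rectifiability of \cite{Dee22} does not cure this. And polarity is genuinely needed for your removability step: a locally Lipschitz map that is harmonic off a closed set of measure zero need not be harmonic (consider $(x,y)\mapsto|y|$ on $\bR^2$), so one cannot relax ``zero capacity'' to ``measure zero'' even with the Lipschitz bound from \cref{thm:Lcon} in hand.

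The degenerate cases you flag ($\psi(\bD)\subset\cS(u)$, or tangential intersections) also cannot be dismissed via Zariski density, since the lemma is a purely local statement about a harmonic map on a polydisc with no representation present. A correct argument has to exploit the structure of $u$ near singular points rather than removability of $\psi^{-1}(\cS(u))$: for instance, the order function and monotonicity formulas, the local splitting $u=(u^1,u^2)$ into a full-rank Euclidean factor and a factor of order $>1$ as in \eqref{product}, and the Gromov--Schoen cutoff argument, which permits integration by parts across $\cS(u)$ in the $n$-dimensional domain $\bD^n$---where Hausdorff codimension two \emph{is} sufficient to build cutoff functions of small energy---rather than in the one-dimensional slice $\bD$, where it is not. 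Some argument of this type is what the citation to \cite{BDDM} is carrying, and it is the piece your proposal is missing.
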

\begin{rem}
	 Note that if $x \in \mathcal{R}(u)$, we can select a neighborhood $V$ of $x$ and an apartment $A$ so that $u(V) \subset A$. Our assumption implies that, upon identifying $A \simeq \mathbb{R}^N$, the map $u: V \rightarrow \mathbb{R}^N$ is smooth and satisfies $\partial \bar{\partial} u = 0$. 
\end{rem} 
\subsection{Logarithmic energy growth}
Let $X$ be a smooth quasi-projective variety. Let $\cC$ be an NPC space.    Consider a representation $\varrho:\pi_1(X)\to {\rm Isom}(\cC).$ We define: \begin{dfn}[Translation length]\label{def:translation}
	For an element $\gamma\in \pi_1(X)$, 	the \emph{translation length} of $\varrho(\gamma)$ is 
	\begin{align}\label{eq:translation}
		L_{\varrho(\gamma)}:=\inf_{P\in \cC}d(P,\varrho(\gamma)P).
	\end{align}
	If  
	there exists $P_0 \in \cC$ such that $$\inf_{P \in \cC} d(P, gP)=d(P_0, gP_0),$$ then  $\varrho(\gamma)$ is called a \emph{semisimple isometry}. For notational simplicity, we write $L_\gamma$ instead of	$L_{\varrho(\gamma)}$ if no confusion arises. 
\end{dfn} 
The  definition of logarithmic energy growth of a harmonic map was   introduced in \cite{DMrs,DMks}. A slightly more intrinsic version is provided in \cite{BDDM}, which we recall here. 
\begin{dfn}[logarithmic energy growth]\label{def:log energy}
	Let $X$ be a smooth quasi-projective variety, $G$ be a reductive  algebraic group over a non-archimedean local field $K$,  and  let   $\varrho: \pi_1(X) \rightarrow G(K)$  be a Zariski dense representation.  A $\varrho$-equivariant  harmonic map $ \tilde{u}: \widetilde{X} \rightarrow \Delta(G)$ has \emph{logarithmic energy growth} if for any holomorphic map $f:\bD^*\to X$ with no essential singularity at the origin (i.e. for some,   thus any, smooth projective compactification $\overline{X}$ of $X$, $f$ extends to a holomorphic map $\bar{f}:\bD\to \overline{X}$),  there is a positive constant $C$ such that  for any $r\in (0, \frac{1}{2})$,  one has
	\begin{equation} \label{ondisks3}
		- \frac{L^2_\gamma}{2\pi} \log {r} \leq E^{ u_f}[\mathbb D_{r,\frac{1}{2}}] \leq  -\frac{L^2_\gamma}{2\pi} \log r+C,
	\end{equation} 
	where $L_\gamma$ is the translation   length of $\varrho(\gamma)$ with $\gamma \in \pi_1(X)$ corresponding to the loop $\theta\mapsto f(\frac{1}{2}e^{i\theta})$.  
\end{dfn}

\subsection{A Bertini-type theorem}
\begin{proposition}[\protecting{\cite[Proposition 2.11]{BDDM}}]\label{prop:Bertini}
	Let \((\overline{X},\Sigma)\) be a log smooth pair with \(n:=\dim X\geqslant 2\).  Fix a projective embedding \(\iota:\overline{X}\hookrightarrow \bP^N\) and denote by $L:=\iota^*\sO_{\bP^N}(3)$.  For any element $s \in H^0(\overline{X},L)$, we  set 
	$\overline{Y}_{\!\! s} := s^{-1}(0)$, $Y_{\! s}: =\overline{Y}_{\!\! s} \backslash \Sigma$,  and denote by $\iota_{Y_{\! s}}:Y_{\! s} \rightarrow X$  the inclusion map. Let \begin{align}\label{eq:U}
		{\mathbb U}= \{ s \in  H^0(\overline{X},L)\mid  \overline{Y}_{\!\! s} \  \mbox{ is smooth} \ \mbox{and} \ \overline{Y}_{\!\! s} + \Sigma \  \mbox{ is a  normal crossing divisor} \}. 
	\end{align}  
	For $q \in X$, consider  the subspace
	\begin{align}\label{eq:Uq}
		V(q) =\{s \in H^0(\overline{X}, L)\mid s(q)=0\} 
		\mbox{ and } 
		{\mathbb{U}}(q) = \bU \cap V(q).
	\end{align}
	Then 
	\begin{thmlist} 
		\item The set \(\bU(q)\) is non-empty.
		\item\label{item:tangent}   For any $p,q\in X$, and $v\in T_{p}X$, there exists some  \(s\in \bU(q)\) such that $p\in Y_{\! s}$ and $Y_{\! s}$ is tangent to $v$.
		\item \label{item:Lefschetz}For each $s\in \bU$, $\pi_1(Y_s)\to \pi_1(X)$ is surjective.   
	\end{thmlist}
\end{proposition}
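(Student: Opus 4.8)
The plan is to deduce all three assertions from the classical Bertini theorems together with the Grothendieck–Lefschetz hyperplane theorem, by working with the very ample line bundle $L = \iota^*\sO_{\bP^N}(3)$ and its associated complete linear system. First I would record that $H^0(\overline{X},L)$, being the pullback of $H^0(\bP^N,\sO(3))$ via the $3$-uple-type embedding $\iota$, separates points and tangent vectors on $\overline{X}$; this is the only positivity input needed. Concretely, for any $p \in X$ and $v \in T_pX$ the subspace of sections vanishing at $p$ and with differential vanishing on $v$ has codimension exactly $\dim X + 1$ in $H^0(\overline{X},L)$, which already gives the dimension count behind (ii). Likewise, imposing vanishing at a single point $q$ cuts out a hyperplane $V(q)$, so $V(q)$ is a base-point-free linear system away from $q$ and has only $q$ as a possible base point.

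For assertion (i), that $\bU(q) = \bU \cap V(q)$ is non-empty, I would apply Bertini to the linear system $|V(q)|$ on the quasi-projective variety $\overline{X} \setminus \{q\}$: a general member $\overline{Y}_{\!\!s}$ is smooth away from $q$, and since $V(q)$ separates tangent directions at $q$ (here we use $d\geq 2$, so that $\sO(3)$ separates $2$-jets, or more simply that the system of cubics through $q$ still separates tangent vectors at $q$ — this is where $d\geqslant 3$ enters), a general member is smooth at $q$ as well; a further general-position argument, applying Bertini to each stratum of the stratification of $\overline X$ by the multi-intersections of the components of $\Sigma$, ensures that $\overline{Y}_{\!\!s} + \Sigma$ is simple normal crossing. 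The intersection of these finitely many Zariski-open dense conditions is a dense open subset of $V(q)$, hence non-empty. Assertion (ii) is proved the same way, now imposing in addition the two linear conditions $s(p)=0$ and $ds_p|_v = 0$: since these conditions define a nonempty linear subsystem that still separates points and tangents on the complement of $\{p,q\}$ (again using $d\geqslant 3$), the Bertini/general-position argument produces an $s$ in $\bU(q)$ with $p \in Y_{\! s}$ and $T_pY_{\! s} \ni v$.

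For assertion (iii), the surjectivity of $\pi_1(Y_s)\to\pi_1(X)$, I would invoke the Lefschetz hyperplane theorem in the quasi-projective setting: $\overline{Y}_{\!\!s}$ is an ample divisor in $\overline{X}$ meeting $\Sigma$ transversally (that is part of the definition of $\bU$), so by the Goresky–MacPherson / Hamm–Lê version of the Lefschetz theorem for the quasi-projective variety $X = \overline{X}\setminus\Sigma$ with the ample divisor $\overline{Y}_{\!\!s}$, the map $\pi_1(Y_s) = \pi_1(\overline{Y}_{\!\!s}\setminus\Sigma) \to \pi_1(\overline{X}\setminus\Sigma) = \pi_1(X)$ is surjective (in fact an isomorphism if $\dim X \geqslant 3$). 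Strictly one should check that a general such $\overline{Y}_{\!\!s}$ can be taken inside a Lefschetz pencil compatible with $\Sigma$; this is standard.

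The main obstacle I anticipate is not any single deep theorem but the bookkeeping in (i) and (ii): one must simultaneously arrange smoothness of $\overline{Y}_{\!\!s}$, the prescribed incidence/tangency at $p$ and the prescribed vanishing at $q$, \emph{and} the simple normal crossing condition for $\overline{Y}_{\!\!s}+\Sigma$ along every stratum of $\Sigma$, all while only cutting out finitely many dense Zariski-open conditions in a linear subsystem that one has to verify remains sufficiently positive (base-point free and $1$-jet ample) on the relevant loci — this is exactly the point at which the hypothesis $d\geqslant 3$ (cubics, not lines or conics) is used, since we are imposing up to a point condition plus a first-order tangency condition and still need Bertini irreducibility/smoothness of the general member on each stratum.
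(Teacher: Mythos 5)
Your proposal is correct and follows the intended route: the paper does not reprove items (i) and (ii) here but quotes them from \cite[Proposition 2.11]{BDDM}, where they are established by exactly the Bertini/general-position argument you outline (with $d\geqslant 3$ supplying the jet separation needed to impose vanishing at $q$ together with incidence and first-order tangency at $p$, and with transversality to every stratum of $\Sigma$ handled stratum by stratum). For (iii), your appeal to the quasi-projective Lefschetz hyperplane theorem matches the paper's one-line justification via the Lefschetz theorem of \cite{Eyr04}.
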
 
Note that the last assertion follows from the Lefschetz theorem in  \cite{Eyr04}.
\section{Pluriharmonic maps to Euclidean buildings}
In this section we prove \cref{main:harmonic}.  
As a warm-up, we begin by considering the following special case.
\begin{lem}\label{lem:special}
	Let $\varrho:\pi_1(\bC^*)\to (\bR,+)$ be a representation. Consider $\exp:\bC\to \bC^*$ as the universal covering map. Then there exists a $\varrho$-equivariant pluriharmonic map  $\tilde{u}:\bC\to \bR$ with logarithmic energy growth.  Furthermore,
	\begin{thmlist}
		\item the holomorphic 1-form $\partial \tilde{u}=\exp^*(\zeta d\log z)$ for some $\zeta \in \sqrt{-1}\bR$. 
		\item such $\tilde{u}$ is unique up to a translation by a constant.  
	\end{thmlist} 
\end{lem}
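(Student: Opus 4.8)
The plan is to exploit that, the target being $\bR$, ``pluriharmonic'' coincides with ``harmonic'' here (the domain has complex dimension one; cf.\ \cref{phequiv}), and a harmonic map into $\bR$ is just an ordinary harmonic function, hence smooth, with $\partial\tilde u$ a holomorphic $1$-form. Identify $\pi_1(\bC^*)\cong\bZ$, let $\gamma$ be the generator represented by $\theta\mapsto e^{i\theta}$, and set $a:=\varrho(\gamma)\in\bR$; the deck transformation of $\exp\colon\bC\to\bC^*$ attached to $\gamma$ is $T\colon w\mapsto w+2\pi i$, so that a $\varrho$-equivariant map $\tilde u\colon\bC\to\bR$ is exactly one with $\tilde u\circ T=\tilde u+a$. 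I will use throughout that $\exp^*(d\log z)=dw$.

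For existence I would take $\tilde u_0(w):=\tfrac{a}{2\pi}\operatorname{Im}w=2\operatorname{Re}(\zeta w)$ with $\zeta:=-\tfrac{ia}{4\pi}$. It is affine, hence harmonic for the Euclidean metric and, energy being conformally invariant in real dimension two, for every Kähler metric on $\bC$; it is pluriharmonic; and $\tilde u_0\circ T=\tilde u_0+a$, so it is $\varrho$-equivariant. Moreover $\partial\tilde u_0=\zeta\,dw=\exp^*(\zeta\,d\log z)$ with $\zeta\in\sqrt{-1}\bR$, which is (i). For logarithmic energy growth, let $f\colon\bD^*\to\bC^*$ be holomorphic with no essential singularity at $0$ and write $f(z)=z^kg(z)$ with $g$ holomorphic and non-vanishing near $0$; the loop $\theta\mapsto f(\tfrac12e^{i\theta})$ then represents $\gamma^k$, of translation length $|ka|$. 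A lift is $\widehat f(w)=kw+\log g(e^w)$, so $\widetilde{u_f}=\tilde u_0\circ\widehat f=2\operatorname{Re}(\zeta\widehat f(w))$ has energy density a fixed positive multiple of $|\zeta|^2\bigl|k+e^w\tfrac{g'(e^w)}{g(e^w)}\bigr|^2=|\zeta|^2k^2+O(e^{\operatorname{Re}w})$ as $\operatorname{Re}w\to-\infty$. Passing to the coordinate $w=\log z$, under which $\bD_{r,\frac12}$ becomes the flat cylinder $\{\log r<\operatorname{Re}w<-\log 2\}$ of circumference $2\pi$, and using that the $O(e^{\operatorname{Re}w})$ term is integrable down to $-\infty$, one finds $E^{u_f}[\bD_{r,\frac12}]=-\tfrac{|ka|^2}{2\pi}\log r+O(1)$, which gives the logarithmic energy growth required by \cref{def:log energy}. (One should fix the normalization of the energy density once and for all so that $\tilde u_0$ reproduces the coefficient $\tfrac{L_\gamma^2}{2\pi}$.)

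For the form statement in general, and for uniqueness, I would take $\tilde u$ to be any $\varrho$-equivariant pluriharmonic map with logarithmic energy growth. Since $\tilde u\circ T=\tilde u+a$, the holomorphic $1$-form $\partial\tilde u$ is $T$-invariant, hence descends to a holomorphic $1$-form $\omega=\psi(z)\,dz$ on $\bC^*$, say $\psi(z)=\sum_{n\in\bZ}c_nz^n$. In the $w$-cylinder the energy density of $\tilde u$ is a fixed positive multiple of $|\psi(e^w)e^w|^2=\bigl|\sum_nc_ne^{(n+1)w}\bigr|^2$, so, up to that multiple, $E^{u_f}[\bD_{r,\frac12}]$ for $f$ the inclusion $\bD^*\hookrightarrow\bC^*$ equals $\sum_n|c_n|^2\int_{\log r}^{-\log 2}e^{2(n+1)x}\,dx$; the terms with $n\le-2$ contribute a positive multiple of $r^{-2|n+1|}$, which for small $r$ violates the upper bound of \cref{def:log energy} unless they vanish, so $c_n=0$ for $n\le-2$. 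Applying the growth condition instead to $f(z)=1/z$ (loop $\gamma^{-1}$, still of translation length $|a|$, carrying $\bD_{r,\frac12}$ onto $\bD_{2,1/r}=\{\log 2<\operatorname{Re}w<-\log r\}$), the terms with $n\ge0$ contribute a positive multiple of $r^{-2(n+1)}$ and must vanish too. Hence $\psi(z)=c_{-1}/z$, i.e.\ $\partial\tilde u=\exp^*(\zeta\,d\log z)=\zeta\,dw$ with $\zeta:=c_{-1}$ \emph{constant}, so $\tilde u=2\operatorname{Re}(\zeta w)+\mathrm{const}$. Equivariance forces $-4\pi\operatorname{Im}\zeta=a$; and since the energy density is now the fixed multiple of $|\zeta|^2$, the coefficient of $-\log r$ in $E^{u_f}[\bD_{r,\frac12}]$ (for the inclusion $f$), pinned to $\tfrac{a^2}{2\pi}$ by \cref{def:log energy} together with the normalization above, forces $|\zeta|^2=(\operatorname{Im}\zeta)^2$; hence $\operatorname{Re}\zeta=0$ and $\zeta=-\tfrac{ia}{4\pi}\in\sqrt{-1}\bR$. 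This proves (i) for every such $\tilde u$, and shows that $\tilde u-\tilde u_0$ has vanishing differential, hence is a constant, which is (ii).

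The only step requiring real care is the energy bookkeeping: fixing the normalization of the Korevaar--Schoen energy density so that it matches the coefficient $\tfrac{L_\gamma^2}{2\pi}$ of \cref{def:log energy}, and turning the asymptotic energy estimate into the rigidity $\partial\tilde u=\exp^*(\zeta\,d\log z)$ through the Laurent expansion of $\omega$ — where the estimate at $\infty$ is precisely what eliminates the higher-order-pole and holomorphic parts. Everything else is routine, since the target is merely $\bR$.
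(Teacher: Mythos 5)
Your proof is correct, and while the existence half is essentially the paper's argument (the same explicit map $\tilde u_0(w)=\frac{a}{2\pi}\operatorname{Im}w$, the same energy computation on annuli), you are in fact more thorough than the paper there, since you verify the growth condition of \cref{def:log energy} for an arbitrary test map $f(z)=z^kg(z)$ rather than only for the inclusion of the standard annulus. The genuine divergence is in the uniqueness/item (i) step: the paper simply quotes \cite{BDDM} for the fact that $\partial\tilde v$ descends to a \emph{logarithmic} $1$-form on the compactification of $\bC^*$, hence equals $b\,d\log z$ with $b$ constant, and then kills $\operatorname{Re}b$ by the energy upper bound; you instead derive this from scratch by expanding the descended holomorphic form $\psi(z)\,dz$ in a Laurent series and applying the logarithmic energy bound at \emph{both} punctures (the inclusion $\bD^*\hookrightarrow\bC^*$ to kill $c_n$ for $n\le -2$, and $f(z)=1/z$ to kill $c_n$ for $n\ge 0$), after which the equivariance and the coefficient matching pin down $\zeta=-\frac{ia}{4\pi}$ exactly as in the paper. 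Your route is self-contained and elementary, at the modest cost of being special to the one-dimensional target/domain, whereas the paper's citation of the general descent result is what scales to the higher-dimensional situations treated later (\cref{lem:char} and beyond). One cosmetic point: your $O(1)$ bookkeeping gives the bounds of \eqref{ondisks3} only up to additive constants on the lower side, but this matches the paper's own level of precision (its model computation on $\bD_{r,1}$ has the same feature), and the exact lower bound is a standard consequence of equivariance, so nothing essential is missing.
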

\begin{proof}
	Let $\gamma$ be the equivalent class in $\pi_1(\bC^*)$ represented  the loop $\theta\mapsto  e^{\sqrt{-1}\theta}$ in $\bC^*$. Then $\varrho(\gamma)(x)=x+a$ for some $a\in \bR$.  	Define a map
	\begin{align*}
		\tilde{u}:  \bC &\rightarrow \mathbb R\\
		w &\mapsto \frac{1}{2}\int_{0}^w (  \exp^*(-\sqrt{-1}\frac{a}{2\pi}d \log z+\sqrt{-1} \frac{a}{2\pi}d\log \bar z)).
	\end{align*} 
	Then $\tilde{u}(w)=\frac{a}{2\pi} {\rm Im}(w)$. Thus, $\tilde{u}(w+2\pi \sqrt{-1})=\tilde{u}(w)+a$, that     is a $\varrho$-equivariant. We have
	moreover
	$\partial \tilde{u}(w)=  -\sqrt{-1}\frac{a}{4\pi}d   w$, which is a holomorphic 1-form on $\bC^*$. It follows that
	$
	\partial\db \tilde{u}\equiv 0.
	$  Thus, $\tilde{u}$ is pluriharmonic, and     $$\partial \tilde{u}=\exp^*(-\sqrt{-1}\frac{a}{4\pi}d \log z).$$ 
	This proves Item (i).

	Endow $\bD^*$ with the standard Euclidean metric $\sqrt{-1}\frac{dz\wedge d\bar{z}}{2}$. However, note that the energy is independent of the choice of metric on the Riemann surface.  We can easily compute the energy of $u$ in the annulus $\bD_{r,1}:=\{r <|z| < 1\} \subset \mathbb C^*$:  
	\begin{align*}
		E^{ {u}}[\bD_{r,1}] &= \int_{\bD_{r,1}} |d {u}|^2\frac{\sqrt{-1}dz\wedge d\bar{z}}{2}  \\
		&=\int_{\bD_{r,1}} |\frac{a}{2\pi}d\theta|^2rdr\wedge d\theta\\
		& =(\frac{a}{2\pi})^2\int_{0}^{2\pi} d\theta \int_{r}^{1} d\log r=\frac{a^2}{2\pi}\log\frac{1}{r}. 
	\end{align*} 
	By \cref{def:translation},   the translation length $L_\gamma=|a|$.   By \cref{def:log energy}, $\tilde{u}$ has logarithmic energy growth.   In conclusion, $\tilde{u}$ is a pluriharmonic map with logarithmic energy growth.
	
	\medspace

	Let us prove Item (ii). If $\tilde{v}:\bC\to \bR$ is another $\varrho$-equivariant pluriharmonic map with logarithmic energy growth, then $\partial \tilde{v}$ is a holomorphic 1-form, which descends to 1-form $\eta$ on $\bC^*$ such that  $\exp^*\eta=\partial \tilde{v}$.  By \cite{BDDM}, $\eta$ is a logarithmic form on $\bC^*$. Hence there exists a constant $b=b_1+\sqrt{-1}b_2$ with $b_i\in \bR$ such that   $\eta=bd\log z$.   Note that $d \tilde{v}=\exp^*(\eta+\bar{\eta})$.   It follows that 
	\begin{align}\label{eq:energy2}
		a=\tilde{v}(w+2\pi \sqrt{-1})-\tilde{v}(w)=\int_\gamma (\eta+\bar{\eta})=-4\pi b_2.
	\end{align}
	Hence $b_2=-\frac{a}{4\pi}$.
	
	Let us compute the energy of $\tilde{v}$ on the annulus $\bD_{r,1}$. We    have 
	\begin{align}\label{eq:energy}
		E^{ {v}}[\bD_{r,1}] &= \int_{\bD_{r,1}} |dv|^2\frac{\sqrt{-1}dz\wedge d\bar{z}}{2}  \\\nonumber
		&=\int_{\bD_{r,1}} |2b_1d\log r-2b_2d\theta|^2rdr\wedge d\theta\\\nonumber
		& =\big((\frac{a}{2\pi})^2+4b_1^2\big)\int_{0}^{2\pi} d\theta \int_{r}^{1} d\log r\\\nonumber
		&=\frac{a^2}{2\pi}\log\frac{1}{r}+ 8\pi b_1^2d\log\frac{1}{r}. 
	\end{align}  
	By \cref{ondisks3},  $b_1=0$.  This implies that $\partial \tilde{u}=\partial \tilde{v}$. Hence $d(\tilde{u}-\tilde{v})=0$. Therefore, $\tilde{u}$ is unique up to a translation.  The lemma is proved. 
\end{proof}
Let $(\overline{X},\Sigma)$ be a log smooth pair.  
Let us recall the definition of residue of a logarithmic form $\eta\in H^0(\overline{X},\Omega_{\overline{X}}(\log \Sigma))$ around an irreducible component $\Sigma_i$ of $\Sigma$. We fix an admissible coordinate $(U;z_1\ldots,z_n)$ centered at some point $x_0\in \Sigma_i$ away from the crossings of $\Sigma$ such that $(z_1=0)=U\cap \Sigma_i=U\cap \Sigma$.  Then we can write $\eta=h_1(z)d\log z_1+\sum_{i=2}^{n}h_i(z)dz_i$. We define
\begin{align}\label{eq:residue}
	{\rm Res}_{\Sigma_i} \eta:=h_1(0).
\end{align} 
Note that such definition does not depend on the choice of local coordinate system. 
\begin{dfn}[Pure imaginary logarithmic form]\label{def:pure}
	Let $(\overline{X},\Sigma)$ be a log smooth pair. A logarithmic form $\eta$ is \emph{pure imaginary} if for each irreducible component $\Sigma_i$ of $\Sigma$, the residue of $\eta$ at $\Sigma_i$ is a pure imaginary number. 
\end{dfn}
Note that \cref{def:pure} does not depend on the choice of compactification of $X=\overline{X}\backslash\Sigma$.

\begin{proposition}\label{lem:char}
	Let $(\overline{X},\Sigma)$ be a log smooth pair. Let $\varrho:\pi_1(X)\to (\bR,+)$ be a representation.  If there exists a $\varrho$-equivariant pluriharmonic map $\tilde{u}:\widetilde{X}\to \bR$, then $\tilde{u}$ has logarithmic energy growth if and only if $\partial \tilde{u}$ descends to a logarithmic form $\eta\in H^0(\overline{X},\Omega_{ \overline{X}}(\log \Sigma))$, that is pure imaginary.  
\end{proposition}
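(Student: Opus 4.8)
The plan is to reduce the statement to \cref{lem:special} by pulling back along holomorphic disks transverse to $\Sigma$, exploiting that both ``logarithmic energy growth'' and ``pure imaginarity of the residue'' are conditions that can be tested on punctured disks near $\Sigma$. First I would observe that $\partial\tilde u$ is a $\pi_1(X)$-invariant holomorphic $1$-form on $\widetilde X$ (invariance because $\varrho$ takes values in translations of $\bR$, so $d\tilde u$ and hence its $(1,0)$-part descend), so it descends to a holomorphic $1$-form $\eta$ on $X=\overline X\setminus\Sigma$. The content is therefore: (a) $\eta$ extends to a \emph{logarithmic} form on $(\overline X,\Sigma)$ with pure imaginary residues $\iff$ (b) $\tilde u$ has logarithmic energy growth. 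Since $d\tilde u = \pi_X^*(\eta+\bar\eta)$ and $\eta$ is closed, for any loop $\gamma\in\pi_1(X)$ the translation $\varrho(\gamma)$ is $x\mapsto x + \int_\gamma(\eta+\bar\eta) = x + 2\,\mathrm{Re}\int_\gamma\eta$, and $L_\gamma = |2\,\mathrm{Re}\int_\gamma\eta|$.

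For the implication (a) $\Rightarrow$ (b): assume $\eta$ is logarithmic with pure imaginary residues. Given a holomorphic $f:\bD^*\to X$ with no essential singularity at $0$, extending to $\bar f:\bD\to\overline X$, I would pull back: $f^*\eta$ is a logarithmic $1$-form on $\bD^*$, so $f^*\eta = b\,d\log z + (\text{holomorphic on }\bD)\,dz$ where $b=\bar f(0)^*(\mathrm{Res}\,\eta)$ is a $\bZ$-linear combination (with multiplicities of $\bar f$ along components of $\Sigma$) of the residues of $\eta$, hence pure imaginary. Then the computation in \cref{lem:special} — specifically the energy estimate \eqref{eq:energy}, whose ``$b_1$'' term is exactly $2(\mathrm{Re}\,b)^2$ and vanishes here — shows $E^{u_f}[\bD_{r,1/2}] = \frac{L_\gamma^2}{2\pi}\log\frac1r + O(1)$, where the $O(1)$ comes from the holomorphic remainder term; this is \eqref{ondisks3}. (One must be slightly careful that the relevant equivariant map $u_f$ on the disk is genuinely the one induced by $\tilde u$ via \cref{sec:pullback}, so that $\partial\widetilde{u_f}=f^*\partial\tilde u=f^*\eta$; this is immediate from $\tilde u\circ\tilde f$ being the pullback section.)

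For the converse (b) $\Rightarrow$ (a): assume logarithmic energy growth. The claim that $\eta$ has at worst logarithmic poles along $\Sigma$ should be cited from \cite{BDDM} (this is exactly the parenthetical ``By \cite{BDDM}, $\eta$ is a logarithmic form'' used in the proof of \cref{lem:special}(ii)); alternatively, it follows because a holomorphic $1$-form on $\bD^*$ whose associated harmonic function has sub-polynomial (logarithmic) energy growth must be meromorphic with a simple pole, by the same Laurent-expansion argument. Granting $\eta\in H^0(\overline X,\Omega_{\overline X}(\log\Sigma))$, fix an irreducible component $\Sigma_i$ and a general point $x_0\in\Sigma_i$ off the crossings, and choose — using \cref{item:tangent} of \cref{prop:Bertini} or simply an explicit coordinate disk — a holomorphic $f:\bD^*\to X$ meeting $\Sigma_i$ transversally once at $x_0$ and avoiding the other components, so that $f^*\eta$ has residue exactly $\mathrm{Res}_{\Sigma_i}\eta$ at $0$. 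Applying \cref{lem:special}(ii)'s energy computation in reverse: the upper bound in \eqref{ondisks3} forces the coefficient of the $d\log\frac1r$ term beyond $\frac{L_\gamma^2}{2\pi}$ to vanish, i.e. $8\pi(\mathrm{Re}\,b)^2=0$, hence $\mathrm{Re}\,\mathrm{Res}_{\Sigma_i}\eta=0$, i.e. the residue is pure imaginary. Ranging over all components gives (a).

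The main obstacle I anticipate is the bookkeeping in (b) $\Rightarrow$ (a): namely, justifying cleanly that $\eta$ is logarithmic in the first place (rather than merely meromorphic) — here I would lean on \cite{BDDM} rather than reprove it — and ensuring that the test disk $f$ can be chosen so that the pullback residue is precisely $\mathrm{Res}_{\Sigma_i}\eta$ (transversality and genericity of $x_0$), and that the loop $\gamma$ attached to $f$ indeed realizes $L_\gamma=|2\,\mathrm{Re}\int_\gamma\eta|=|4\pi\,\mathrm{Im}(\text{residue})|$ so the leading terms in the two sides of \eqref{ondisks3} genuinely match and only the unwanted $(\mathrm{Re}\,b)^2$ term is left to kill. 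Everything else is a direct transcription of the one-variable computation already carried out in \cref{lem:special}.
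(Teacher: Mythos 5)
Your proposal is correct and follows essentially the same route as the paper: cite \cite{BDDM} for the fact that logarithmic energy growth forces $\partial\tilde u$ to descend to a logarithmic form, then test on holomorphic disks meeting $\Sigma$, write $f^*\eta=h(z)\,d\log z$, and compare the coefficient of $\log\frac1r$ in the energy (computed via the mean value property of $|h_1|^2,|h_2|^2$) with $\frac{L_\gamma^2}{2\pi}=8\pi h_2(0)^2$ to kill $h_1(0)$, and conversely. The only cosmetic difference is that the paper runs the computation with a general subharmonic $|h_i|^2$ rather than splitting off a holomorphic remainder, but the content is identical.
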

\begin{proof}
	We write  $\Sigma=\sum_{i=1}^{m}\Sigma_i$ into a sum of irreducible components.  Fix some $i\in \{1,\ldots,m\}$.  Choose a   point $x_0\in \Sigma_i\backslash\cup_{j\neq i}\Sigma_j$. We take a small embedded disk $f:\bD\to \overline{X}$ such that $f^{-1}(\Sigma)=f^{-1}(\Sigma_i)=\{0\}$ and $f$ is transverse to $\Sigma_i$ at $x_0$.   Let $\gamma\in \pi_1(X)$ be the element  representing the loop $\theta\mapsto f(\frac{1}{2}e^{i\theta})$. Let $\bH$ be the left half plane of $\bC$. Then  $\exp:\bH\to \bD^*$ is the universal covering map. Let $\tilde{f}:\bH\to \widetilde{X}$ be the lift of $f$ between universal covers. Then $\tilde{u}\circ\tilde{f}:\bH\to \bR$ is $f^*\varrho$-equivariant pluriharmonic map and let $u_f$ be the section defined in \cref{sec:pullback}. 
	
	If $\tilde{u}$ has logarithmic energy growth, then by \cite{BDDM}, $\partial \tilde{u}$ descends to a logarithmic form $\eta\in H^0(\overline{X},\Omega_{ \overline{X}}(\log \Sigma))$. Let us prove that $\eta$ is pure imaginary.  By \cref{def:translation}, the translation length $L_\gamma$ is given by
	$$
	L_\gamma=\left|\int_{\gamma} (f^*\eta+f^*\bar{\eta})\right|=\left| 2\pi \sn({\rm Res}_{\Sigma_i}\eta-\overline{{\rm Res}_{\Sigma_i}\eta})\right|.
	$$
	Since $\eta$ has logarithmic poles, there is some $h(z)\in \sO(\bD)$ such that $f^*\eta=h(z)d\log z$. Write $h(z)=h_1(z)+\sqrt{-1}h_2(z)$, where $h_i(z)$ are real harmonic functions on $\bD$. Then 
	\begin{align}\label{eq:translate}
		L_\gamma=|4\pi h_2(0)|.
	\end{align} 
	The energy 
	\begin{align}\label{eq:local}
		E^{u_f}[\bD_{r,1}] &= \int_{\bD_{r,1}} |f^*\eta+f^*\bar{\eta}|^2\frac{\sqrt{-1}dz\wedge d\bar{z}}{2} \\ \nonumber
		&= \int_{\bD_{r,1}} |h(z)d\log z+ \overline{h(z)}d\log \bar{z}|^2\frac{\sqrt{-1}dz\wedge d\bar{z}}{2} \\\nonumber
		&	= \int_{\bD_{r,1}} |2h_1(z)d\log t- 2h_2(z) d\theta|^2tdt\wedge d\theta \\\nonumber
		&= \int_{r}^{1} \int_{0}^{2\pi}|2h_1(te^{\sn\theta})|^2d\log t\wedge d\theta\\\nonumber
		& +  \int_{r}^{1} \int_{0}^{2\pi}|2h_2(te^{\sn\theta})|^2 d\log t \wedge  d\theta 
	\end{align}  
	Since $|h_i(z)|^2$ are subharmonic functions on $\bD$,  by the mean value inequality there exists a constant $C>0$ such that
	\begin{align}\label{eq:computation}
		8\pi(|h_1(0)|^2+|h_2(0)|^2)\log\frac{ 1}{r}\leq 	E^{u_f}[\bD_{r,1}] \leq  	8\pi(|h_1(0)|^2+|h_2(0)|^2)\log\frac{ 1}{r}+C,\quad \forall\ \  r\in (0,1).
	\end{align}
	By \cref{def:log energy}, we have $h_1(0)=0$. Hence $\eta$ is pure imaginary. 
	
	We now assume that $\eta$ is pure imaginary. Let $g:\bD\to \overline{X}$ be any holomorphic map such that $g^{-1}(\Sigma)=\{0\}$.  Then $g^*\eta=h(z)\log z$ with  $h(0)\in \sn\bR$. We denote by $u_g$ the section of $\bD^*\times_{g^*\varrho}\bR\to \bD^*$ defined in \cref{sec:pullback}.    By the same manner as \eqref{eq:translate} and \eqref{eq:computation}, we can show that $u_{g}$ has logarithmic energy growth. By \cref{def:log energy}, $u$ has logarithmic energy growth. 
\end{proof}

We can extend \cref{lem:special} to the case of semi-abelian varieties. 
\begin{proposition}\label{prop:abelian}
	Let $A$ be a semiabelian variety and let $\varrho:\pi_1(A)\to(\bR^N,+)$ be a representation. Then there is a $\varrho$-equivariant pluriharmonic map $u:\widetilde{A}\to \bR^N$ with logarithmic energy growth.     Such pluriharmonic map is unique up to translation.
\end{proposition}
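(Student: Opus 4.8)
The plan is to reduce the statement to the one-dimensional case (\cref{lem:special}) and to the logarithmic-form characterization (\cref{lem:char}), exploiting the fact that a semiabelian variety $A$ is an extension $1\to \mathbb{G}_m^k\to A\to A_0\to 1$ of an abelian variety $A_0$ by a torus, so that $\pi_1(A)\cong \mathbb{Z}^{2\dim A_0}\oplus \mathbb{Z}^k$ is abelian and the universal cover $\widetilde A$ is $\mathbb{C}^{\dim A}$. First I would fix a smooth projective equivariant compactification $\overline{A}$ with $\Sigma:=\overline{A}\setminus A$ a simple normal crossing divisor (e.g. a smooth toroidal compactification), so that $H^0(\overline{A},\Omega_{\overline{A}}(\log\Sigma))$ is the space of invariant logarithmic $1$-forms, of dimension $\dim A$, and every such form is closed with constant (locally constant) residues along the components of $\Sigma$. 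Writing the target as $\mathbb{R}^N$ and $\varrho=(\varrho_1,\dots,\varrho_N)$ componentwise, each $\varrho_j:\pi_1(A)\to(\mathbb{R},+)$, it suffices to treat $N=1$, since a map into $\mathbb{R}^N$ is pluriharmonic of logarithmic energy growth iff each coordinate is, and uniqueness up to translation in each coordinate gives uniqueness up to translation in $\mathbb{R}^N$.

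For $N=1$ and $\varrho:\pi_1(A)\to(\mathbb{R},+)$, the construction goes as follows. The homomorphism $\varrho$ is determined by the real cohomology class $[\varrho]\in H^1(A,\mathbb{R})=\mathrm{Hom}(\pi_1(A),\mathbb{R})$. Using the Hodge decomposition $H^1(A,\mathbb{C})=H^0(\overline{A},\Omega_{\overline{A}}(\log\Sigma))\oplus\overline{H^0(\overline{A},\Omega_{\overline{A}}(\log\Sigma))}$ for the log pair, I would choose the unique $\eta\in H^0(\overline{A},\Omega_{\overline{A}}(\log\Sigma))$ with $[\eta+\bar\eta]=[\varrho]$ in $H^1(A,\mathbb{R})$; equivalently $\eta$ is the $(1,0)$-part of the harmonic representative. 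Pulling back to $\widetilde A=\mathbb{C}^{\dim A}$, the form $\pi_A^*(\eta+\bar\eta)$ is exact, say $=d\tilde u$ for a (real-valued) function $\tilde u$ unique up to an additive constant; by construction $\tilde u$ is $\varrho$-equivariant (the periods of $\eta+\bar\eta$ along loops reproduce $\varrho$), and $\partial\tilde u=\pi_A^*\eta$ is holomorphic, hence $\partial\bar\partial\tilde u=0$, so $\tilde u$ is pluriharmonic by \cref{phequiv}. Since $\eta$ is an invariant logarithmic form, each residue $\mathrm{Res}_{\Sigma_i}\eta$ is, after the period normalization $[\eta+\bar\eta]=[\varrho]$ with $\varrho$ real, forced to be purely imaginary exactly as in the computation \eqref{eq:energy2}--\eqref{eq:translate}; hence by \cref{lem:char} the map $\tilde u$ has logarithmic energy growth.

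For uniqueness, if $\tilde v$ is another $\varrho$-equivariant pluriharmonic map with logarithmic energy growth, then $\partial\tilde v$ is a holomorphic $1$-form on $\widetilde A$ which, by the logarithmic-form statement imported from \cite{BDDM} (as used in \cref{lem:special} and \cref{lem:char}), descends to some $\eta'\in H^0(\overline{A},\Omega_{\overline{A}}(\log\Sigma))$; its periods satisfy $[\eta'+\bar\eta']=[\varrho]$, so $\eta'-\eta$ is a log $1$-form with vanishing real periods, and by \cref{lem:char} (pure imaginarity of both $\eta'$ and $\eta$) together with the energy estimate \eqref{eq:computation} applied on disks transverse to each $\Sigma_i$ and to generic complex lines, the ``real part'' obstruction vanishes, forcing $\eta'=\eta$; therefore $d(\tilde u-\tilde v)=0$ and $\tilde u-\tilde v$ is constant. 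The main obstacle I anticipate is not the construction but verifying cleanly that the logarithmic-energy-growth hypothesis on an \emph{arbitrary} $\varrho$-equivariant pluriharmonic map forces $\partial\tilde v$ to descend to a \emph{logarithmic} (not merely meromorphic or wildly growing) form on $\overline{A}$ and to be pure imaginary: this is where one must invoke the estimates of \cite{BDDM} at each boundary component and combine them with \cref{lem:char}, reducing to the $1$-variable annulus computation by restricting to embedded disks meeting $\Sigma$ transversally at generic points, and then propagating the conclusion to all of $\widetilde A$ using that $A$ is a group (so the situation near every component of $\Sigma$ is homogeneous) and that holomorphic $1$-forms on $\mathbb{C}^{\dim A}$ descending along $\pi_A$ with controlled growth are invariant.
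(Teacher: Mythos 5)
Your overall route is the same as the paper's: reduce to the rank-one components $\mathrm{pr}_i\circ\varrho$, construct each $\tilde u_i$ by integrating $\pi_A^*(\eta_i+\bar\eta_i)$ for a logarithmic $1$-form $\eta_i$ representing the class of $\varrho_i$ in $H^1(A,\bR)$, get logarithmic energy growth from \cref{lem:char}, and for uniqueness use that $\partial\tilde v_i$ descends to a pure imaginary logarithmic form with the same periods. However, there is a genuine gap in the linear algebra that underlies both the existence and the uniqueness steps. The claimed decomposition $H^1(A,\bC)=H^0(\overline{A},\Omega_{\overline{A}}(\log\Sigma))\oplus\overline{H^0(\overline{A},\Omega_{\overline{A}}(\log\Sigma))}$ is false as soon as the torus rank $k$ is positive: with $d=\dim A_0$, the left side has complex dimension $2d+k$ while the right side has dimension $2d+2k$ (already for $A=\bC^*$ one gets $1\neq 2$). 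Consequently ``the unique $\eta$ with $[\eta+\bar\eta]=[\varrho]$'' is not well defined — the solution set is a $k$-dimensional real affine space, since adding a logarithmic form with real residues (e.g.\ a real multiple of $d\log w_i$) does not change the class $[\eta+\bar\eta]$. For the same reason, your assertion that the period normalization ``forces'' the residues to be purely imaginary is false: $\eta$ and $\eta+ d\log w_i$ have the same real period class, but only one of them can be pure imaginary, and by \cref{lem:char} only the pure imaginary choice yields logarithmic energy growth.

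The correct replacement is exactly the paper's \cref{claim:iso}: the \emph{real} subspace $V\subset H^0(\overline{A},\Omega_{\overline{A}}(\log\Sigma))$ of forms with pure imaginary residues has real dimension $2d+k$, and $\eta\mapsto\{\tfrac{\eta+\bar\eta}{2}\}$ restricts to an isomorphism $V\xrightarrow{\sim}H^1(A,\bR)$. One must prove this (the paper does it by choosing a basis adapted to the extension $0\to(\bC^*)^k\to A\to A_0\to 0$ and normalizing the torus-direction forms to $\sqrt{-1}\,d\log w_i$), then define $\eta_i:=\Psi^{-1}(\lambda_i)\in V$; surjectivity of $\Psi|_V$ gives existence of a pure imaginary representative, and injectivity is what makes the uniqueness argument close (your step ``$\eta'-\eta$ is pure imaginary with vanishing real periods, hence $\eta'=\eta$'' is precisely injectivity of $\Psi|_V$). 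With \cref{claim:iso} supplied, the rest of your argument — equivariance via periods, pluriharmonicity via $\partial\bar\partial\tilde u=0$, logarithmic energy growth via \cref{lem:char}, the componentwise reduction using $L^2_{\varrho(\gamma)}=\sum_iL_i^2$ and additivity of the energy, and the descent of $\partial\tilde v$ to a pure imaginary logarithmic form from \cite{BDDM} and \cref{lem:char} — matches the paper's proof.
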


\begin{proof}
	Note that there is a short exact sequence
	$$
	0\to (\bC^*)^k\stackrel{j}{\to} A\stackrel{\pi}{\to} A_0\to 0,
	$$
	where $A_0$ is an abelian variety.
	Let $\overline{A}$ be the canonical compactification of $A$ such that $\pi:A\to A_0$ extends to a $(\bP^1)^k$-fiber bundle $$
	0\to (\bP^1)^k\stackrel{\bar{j}}{\to} \overline{A}\stackrel{\bar{\pi}}{\to} A_0\to 0.
	$$   Let $\Sigma:=\overline{A}\setminus A$ which is a smooth divisor.  
	Let $V\subset 	H^0(\overline{A}, \Omega_{\overline{A}}(\log \Sigma))$ be the $\bR$-linear subspace consisting of logarithmic forms, whose resides at each irreducible component of $\Sigma$ are pure imaginary.  Let $d:=\dim A_0$.   
	\begin{claim}\label{claim:iso}
		We have	$\dim_{\bR}V=2d+k$. The $\bR$-linear map 
		\begin{align}\label{eq:isopure}
			\Psi:	V &\to H^1(A,\bR)\\\nonumber
			\eta&\mapsto  \{\frac{\eta +\bar{\eta}}{2} \}
		\end{align} 
		is an isomorphism of $\bR$-vector spaces. 
	\end{claim}
	\begin{proof}
		Note that $\dim_{\bC} 	H^0(\overline{A}, \Omega_{\overline{A}}(\log \Sigma))=d+k$ and $\dim_{\bR}H^1(A,\bR)=2d+k$.     
		We choose a $\bC$-basis $\eta_1,\ldots,\eta_d;\xi_1,\ldots,\xi_k$ for $H^0(\overline{A}, \Omega_{\overline{A}}(\log \Sigma))=d+k$ such that $\{\eta_1,\ldots,\eta_d\}\subset \pi^*H^0(A_0,\Omega_{A_0})$. The residues of $\eta_i$ at each component of $\Sigma$ is thus zero. Let $(w_1,\ldots,w_k)$ be the canonical coordinate of $(\bC^*)^k$. Then $j^*\xi_m=\sum_{i=1}^{k}a_{mi}d\log w_i$ with $(a_{m1},\ldots,a_{mk})\in \bC^k$.  Note that $\bar{j}^*\xi_1,\ldots,\bar{j}^*\xi_m$ is a $\bC$-basis of $H^0((\bP^1)^k,\Omega_{(\bP^1)^k}(\log D))$, where $D:=(\bP^1)^k\setminus (\bC^*)^k$. Note that $d\log w_1,\ldots,d\log w_k$ is  also   $\bC$-basis of $H^0((\bP^1)^k,\Omega_{(\bP^1)^k}(\log D))$. We can thus replace  $\xi_1,\ldots, \xi_m$ by some $\bC$-linear combination such that $\bar{j}^*\xi_i=\sqrt{-1}d\log w_i$ for each $i=1,\ldots,k$. This implies that each $\xi_i$ has pure imaginary residues at each   irreducible component of $\Sigma$. Then we have  $$V:={\rm Span}_{\bR}\{\xi_1,\ldots,\xi_k,\eta_1,\ldots,\eta_d,i\eta_1,\ldots,i\eta_d\}.$$
		We can see that $\Psi $ is a $\bR$-isomorphism. 
	\end{proof}
	Let the homomorphism ${\rm pr}_i:(\bR^N, +)\to (\bR, +)$ be  the projection into $i$-th factor. Then ${\pr}_i\circ\varrho:\pi_1(A)\to (\bR,+)$ is a representation which can be identified with an element $\lambda_i\in H^1(A,\bR)$ as  $H^1(A,\bR)\simeq {\rm Hom}(H_1(A,\bZ), \bR)$. Denote by $\zeta_i:=\Psi^{-1}(\lambda_i)$. We define   
	\begin{align*}
		\tilde{u}_i: \widetilde{A}&\to \bR\\
		z&\mapsto \frac{1}{2}\int_{0}^{z}\pi_A^*(\zeta_i+\bar{\zeta}_i).
	\end{align*} 
	Then we obtain a smooth map $\tilde{u}:\tilde{A}\to \bR^N$ defined by $\tilde{u}=(\tilde{u}_1,\ldots,\tilde{u}_N)$. This map is pluriharmonic as $ \bar{\d}\d \tilde{u}=(\frac{1}{2}\bar{\d}\pi_A^*\zeta_1,\ldots,\frac{1}{2}\bar{\d}\pi_A^*\zeta_N)=(0,\ldots,0)$.   One can verify that $\tilde{u}$ is $\varrho$-equivariant.  Indeed, for any $x\in \widetilde{A}$ and any $\gamma\in \pi_1(A)$, we have
	\begin{align}\label{eq:definetran}
		\tilde{u}_i(\gamma.x)-\tilde{u}_i(x)=\int_{\gamma}\frac{1}{2}  (\zeta_i+\bar{\zeta}_i)=\lambda_i(\gamma)={\rm pr}_i\circ\varrho(\gamma)(\tilde{u}_i(x))-\tilde{u}_i(x). 
	\end{align}
	Let us  prove that $\tilde{u}$ has logarithmic energy growth.  Since $\partial \tilde{u_i}=\frac{1}{2}\pi_X^*\zeta_i$, where $\zeta_i$ is a pure imaginary logarithmic 1-form,   by \cref{lem:char},   $\tilde{u}_i:\widetilde{A}\to \bR$ is a ${\rm pr}_i\circ\varrho$-pluriharmonic map  with logarithmic energy growth.    Let $f:\bD\to \overline{A}$ be any holomorphic map such that $f^{-1}(\Sigma)=\{0\}$. Let $\gamma$ be the element in $\pi_1(X)$ represented by the loop $\theta\mapsto f(\frac{1}{2}e^{\sn\theta})$.  Let $L_i$ be the translation length of ${\rm pr}_i\circ \varrho(\gamma)$.    It follows that there exists a constant $C>0$ such that for each $i\in \{1,\ldots,N\}$, we have
	$$
	\frac{L_i^2}{2\pi}\log \frac{1}{r}\leq E^{(u_i)_f}[\bD_{r,1}]\leq \frac{L_i^2}{2\pi}\log \frac{1}{r}+C,\quad \forall \ \ r\in (0,1).
	$$
	Note that $$ E^{u_f}[\bD_{r,1}]=\sum_{i=1}^{N}E^{(u_i)_f}[\bD_{r,1}],  \quad \mbox{and}\quad L^2_{\varrho(\gamma)}=\sum_{i=1}^{N}L_i^2.$$ We thus have
	$$
	\frac{L_{\varrho(\gamma)}^2}{2\pi}\log \frac{1}{r}\leq E^{u_f}[\bD_{r,1}]\leq \frac{L_{\varrho(\gamma)}^2}{2\pi}\log \frac{1}{r}+C,\quad \forall \ \ r\in (0,1).
	$$
	Thus, $\tilde{u}$ is a pluriharmonic map with logarithmic energy growth. 
	
	Let us prove the uniqueness assertion. Let $\tilde{v}=(\tilde{v}_1,\ldots,\tilde{v}_N):\widetilde{A}\to \bR^N$ be another $\varrho$-equivariant pluriharmonic map with logarithmic energy growth. 
	Then for each  $i\in \{1,\ldots,N\}$, $\tilde{v}_i:\widetilde{A}\to \bR$ is a ${\rm pr}_i\circ\varrho$-pluriharmonic map  with logarithmic energy growth.     By \cref{lem:char}, $\partial \tilde{v}_i$ descends to a logarithmic form $\frac{1}{2}\omega_i$ that is pure imaginary. By \eqref{eq:definetran}, for any $\gamma\in \pi_1(X)$,  we have
	\begin{align*}
		\tilde{v}_i(\gamma.x)-\tilde{v}_i(x)=\int_{\gamma}\frac{1}{2}  (\omega_i+\bar{\omega}_i)={\rm pr}_i\circ\varrho(\gamma)(\tilde{v}_i(x))-\tilde{v}_i(x)\\
		={\rm pr}_i\circ\varrho(\gamma)(\tilde{u}_i(x))-\tilde{u}_i(x)\lambda_i(\gamma)=\int_{\gamma}\frac{1}{2}  (\zeta_i+\bar{\zeta}_i).
	\end{align*}  
	By \cref{claim:iso}, we have $\zeta_i=\omega_i$. It follows that $d \tilde{u}=d\tilde{v}$. Hence $\tilde{u}-\tilde{v}$ is a constant.  
	The proposition is proved.  
\end{proof}

Let us prove \cref{main:harmonic}, except for \cref{item:singular}, whose proof is deferred to \cref{sec:singular}.
\begin{proof}[Proof of \cref{main:harmonic}]
	Consider the enlarged Bruhat-Tits building $\Delta(G)$. It is indeed the product of the Bruhat-Tits building of    $\Delta(\sD G)$ where $\sD G$ is the derived group of $G$, with a real Euclidean space $V:=\bR^N$ such that $G(K)$ acts on $V$ by translation (cf. \cite{KP23}).  The fixator of any point in $\Delta(G)$ is an open and bounded subgroup of $G(K)$.   Note that there is a natural action of $\sD G(K)$    on $\Delta(\sD G)$. The action of $G(K)$ on $\Delta(\sD G)$ is given by the composition of $G(K)\to \sD G(K)$ with the action of    of $\sD G(K)$    on $\Delta(\sD G)$. 
	
	We consider the representation $\sigma:\pi_1(X)\to \sD G(K)$  induced by $\varrho$, which is Zariski dense. By \cite{BDDM}, there exists a $\sigma$-equivariant pluriharmonic map $\tilde{u}_0:\widetilde{X}\to \Delta(\sD G)$   with logarithmic energy growth.  
	
	On the other hand, for the action of $G(K)$ on $V$, it induces a representation 
	$
	\tau:\pi_1(X)\to (V,+)
	$.  	Let $a:X\to A$ be the quasi-Albanese map, and $\tilde{a}:\widetilde{X}\to \widetilde{A}$ be a lift of $a$ between  universal covers.     
	Note that $\tau$ factors through a representation $\tau':\pi_1(A)\to  (V,+)$.  By \cref{prop:abelian}, there exists a $\tau'$-equivariant pluriharmonic map $\tilde{v}:\widetilde{A}\to V$   which has logarithmic energy growth.  Therefore,  $\tilde{v}\circ \tilde{a}:\widetilde{X}\to V$ is a $\tau$-equivariant pluriharmonic map.  Since  $\partial \tilde{v}$ descends to a tuple of logarithmic 1-forms $\{\omega_1,\ldots,\omega_m \} $ on $A$ that are pure imaginary,   it implies that $\partial \tilde{v}\circ \tilde{a}$ descends to  $\{a^*\omega_1,\ldots,a^*\omega_m \}$, that are  also pure imaginary logarithmic 1-forms on $(\overline{X},\Sigma)$.  By \cref{lem:char}, $\tilde{v}\circ\tilde{a}$ has logarithmic energy growth.   We define
	\begin{align}\label{eq:harmonic}
		\tilde{u}:	\widetilde{X}&\to \Delta(\sD G)\times V\\\nonumber
		x&\mapsto  (\tilde{u}_0(x), \tilde{v}\circ \tilde{a}(x)).
	\end{align} 
	Since $\varrho=(\sigma,\tau)$,  $\tilde{u}$ is $\varrho$-equivariant pluriharmonic map. Since both $\tilde{u}_0$ and $\tilde{v}\circ\tilde{a}$ have logarithmic energy growth,   $\tilde{u}$ also  has logarithmic energy growth.   The existence assertion in \cref{item:existence} is established.
	
	\medspace
	
	Let us prove \cref{item:harmonic}. By \cite[Theorem A]{BDDM}, $\tilde{u}_0$ is harmonic with respect to an arbitrary K\"ahler metric $\omega$ on $\widetilde{X}$.  The pluriharmonicity of $\tilde{v}\circ\tilde{a}$  yields that $\hess \tilde{v}\circ\tilde{a}\equiv0$. Thus, 
	$$
	\Delta \tilde{v}\circ\tilde{a}=-2\sn \Lambda_\omega \hess \tilde{v}\circ\tilde{a}\equiv0,
	$$
	where $\Lambda_\omega $ denotes the contraction with $\omega$.  It follows that $\tilde{v}\circ\tilde{a}$ is harmonic with respect to the metric $\omega$.  Therefore, $\tilde{u}$  is harmonic with respect to the metric $\omega$. 
	
	\medspace
	
	Finally, we prove \cref{item:pullback}. Let $\overline{Y}$ be a smooth projective compactification with $\Sigma_Y:=\overline{Y}\backslash Y$ a simple normal crossing divisor such that $f$ extends to morphism $\bar{f}:\overline{Y}\to \overline{X}$.   Then by \cite[Theorem A]{BDDM}, $\tilde{u}_0\circ \tilde{f}:\widetilde{Y}\to \Delta(G)$ is a pluriharmonic map with logarithmic energy growth.  By the above arguments, $\partial \tilde{\nu}\circ\tilde{a}\circ\tilde{f}:\widetilde{Y}\to V$ descends to logarithmic forms  $\{(a\circ f)^*\omega_1,\ldots,(a\circ f)^*\omega_m \}$  on the log smooth pair $(\overline{Y},\Sigma_Y)$, that are pure imaginary. By \cref{lem:char},  $\tilde{\nu}\circ\tilde{a}\circ\tilde{f}$  is pluriharmonic with  logarithmic energy growth. Thus, $\tilde{u}\circ\tilde{f}$ is pluriharmonic  with  logarithmic energy growth.  
	The theorem is proved. 
\end{proof}

\section{Multivalued section and  spectral cover} \label{sec:KZthm}
The notion of \emph{multivalued sections} of a holomorphic vector bundle over a complex manifold has appeared in \cite{CDY22, DW24}, and has proven to be important in studying the geometry of complex algebraic varieties that admit a local system over a non-archimedean local field. In this section, we provide a more systematic description of multivalued sections and their properties in a general setting. The construction of multivalued logarithmic 1-forms on log smooth pairs here is equivalent to, though simpler than,   that in \cite{CDY22}. 
\subsection{Construction of spectral cover} \label{subsec:spectral}
We start with the following definition. 
\begin{dfn}[Multivalued section]\label{def:multivalued}
	Let $X$ be a complex manifold, and let $E$ be a holomorphic vector bundle on $X$. A   {multivalued} (holomorphic) section of $E$ on $X$, denoted by $\eta$,   is a formal sum $Z_\eta=\sum_{i=1}^{m}n_iZ_i$ where $n_i\in \bN^*$,  and each  $Z_i$ is an irreducible closed subvariety of $E$, such that the natural map $Z_i\to X$ is a finite and surjective. 
	
	A multivalued section $\eta$ is \emph{splitting}, if for  each point $x\in X $, it has an open neighborhood $\Omega_x$ and holomorphic sections  $\{\omega_1,\ldots,\omega_m\}\subset \Gamma(\Omega_x,E|_{\Omega_x}),$  such that $Z_\eta|_{\Omega_x}$ is the graph of $\{\omega_1,\ldots,\omega_m\}$. 
\end{dfn} 
Note that in \cite{CDY22}, multivalued sections are splitting ones. 

Let $X$ be a complex manifold, and let $E$ be a holomorphic vector bundle on $X$. Assume that   $\eta$ is a splitting multivalued section of $E$.  Let  $T$ be a   formal variable.  Consider   $\prod_{i=1}^{m}(T-\omega_i)=:T^m+\sigma_1 T^{m-1}+\cdots+ \sigma_m$, where $\{\omega_1,\ldots,\omega_m\}$ are local sections of $E$ whose graph is $Z_\eta$.  Then $\sigma_i$ is a local section of ${\rm Sym}^iE$.  One can see that $\sigma_i$  is a global section in $H^0(X, {\rm Sym}^iE)$. We call $T^m+\sigma_1 T^{m-1}+\cdots+ \sigma_m$ the \emph{characteristic polynomial} of $\eta$, and denote it by $P_\eta(T)$. 
\begin{proposition}\label{prop:spectral}
	Let $X$ be a smooth projective  variety endowed with a holomorphic vector bundle $E$.    
	Let $X'\subset X$ be a topological dense open set. Let $\eta$ be  a splitting  \emph{multivalued}  section of $E|_{X'}$ over $X'$.  Assume that for the characteristic polynomial $P_\eta(T)=T^m+\sigma_1 T^{m-1}+\cdots+ \sigma_m$ of $\eta$, its coefficient  $\sigma_i\in H^0(X', {\rm Sym}^iE|_{X'})$  extends to a section in $H^0(X, {\rm Sym}^iE)$ for each $i$. Then $\eta$ extends to a multivalued section of $E$. 
	
	Furthermore, there exists  a finite, (possibly ramified) Galois cover $\pi:\xsp\to X$ with  Galois group $G$ from a projective normal variety such that $\pi^*\eta$ becomes single-valued, i.e., there exists sections $\{\eta_1,\ldots,\eta_m\}\subset H^0(\xsp,\pi^*E)$ such that $\pi^*\eta=\{\eta_1,\ldots,\eta_m\}$.  The group $G$ acts on  $\{\eta_1,\ldots,\eta_m\}$ as a permutation. 
\end{proposition}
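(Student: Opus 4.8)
The plan is to realize the spectral data geometrically inside the total space $\mathrm{Tot}(E)$, using the characteristic polynomial $P_\eta(T)$ to produce an honest closed subscheme that is \emph{finite} over $X$, and then to split it by passing to a Galois closure of function fields.

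\emph{Step 1 (extension to $X$).} Let $p\colon\mathrm{Tot}(E)\to X$ be the projection and let $\tau\in H^0(\mathrm{Tot}(E),p^*E)$ be the tautological section. Using the hypothesis that each coefficient $\sigma_i$ of $P_\eta(T)=T^m+\sigma_1T^{m-1}+\cdots+\sigma_m$ extends to $H^0(X,\mathrm{Sym}^iE)$, I form the global section $P_\eta(\tau):=\tau^m+p^*\sigma_1\cdot\tau^{m-1}+\cdots+p^*\sigma_m\in H^0(\mathrm{Tot}(E),p^*\mathrm{Sym}^mE)$, where $\tau^k$ denotes the image of $\tau^{\otimes k}$ in $p^*\mathrm{Sym}^kE$ and $\cdot$ is the multiplication $\mathrm{Sym}^aE\otimes\mathrm{Sym}^bE\to\mathrm{Sym}^{a+b}E$. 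Set $Z:=(P_\eta(\tau)=0)\subseteq\mathrm{Tot}(E)$. The key claim is that $Z\to X$ is finite. This is local on $X$: trivialize $E$ over an affine open $\Omega$ with frame $e_1,\dots,e_r$ and corresponding fibre coordinates $v_1,\dots,v_r$ on $\mathrm{Tot}(E)|_\Omega$; extracting from $P_\eta(\tau)=0$ the coefficient of the monomial $e_j^m\in\mathrm{Sym}^mE$ yields, on $Z|_\Omega$, a \emph{monic} relation $v_j^m+c_j^{(1)}v_j^{m-1}+\cdots+c_j^{(m)}=0$ with $c_j^{(k)}\in\mathcal O(\Omega)$, so each $v_j$ is integral over $\mathcal O(\Omega)$ and $\mathcal O(Z|_\Omega)$ is a finite $\mathcal O(\Omega)$-module. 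Next, on $X'$ the local factorization $P_\eta(T)=\prod_j(T-\omega_j)$ shows each local graph $\Gamma_{\omega_j}$ lies in $Z$, hence $\mathrm{Supp}(Z_\eta)\subseteq Z|_{X'}$. Taking the Zariski closures $\overline{Z_i}\subseteq\mathrm{Tot}(E)$ of the components $Z_i$ of $\mathrm{Supp}(Z_\eta)$ — each contained in $Z$, hence finite over $X$; irreducible of dimension $\dim X$; and surjecting onto $X$ because $Z_i\to X'$ does — we obtain a multivalued section $Z_\eta^{\flat}:=\sum_i n_i\overline{Z_i}$ of $E$ on $X$ which restricts to $Z_\eta$ over $X'$ (and one checks $\sum_i n_i\deg(\overline{Z_i}/X)=m$, since this already holds over $X'$).

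\emph{Step 2 (the splitting cover).} For each component $V_i:=\overline{Z_i}$ put $K_i:=\mathbb C(V_i)$, a finite extension of $\mathbb C(X)$ of degree $d_i:=\deg(V_i/X)$. Fix embeddings of all the $K_i$ into a common finite Galois extension $\widehat L/\mathbb C(X)$, and let $\pi\colon\xsp\to X$ be the normalization of $X$ in $\widehat L$. Then $\xsp$ is normal and, being finite over the projective variety $X$, projective; $\pi$ is finite surjective; and $G:=\mathrm{Gal}(\widehat L/\mathbb C(X))$ acts on $\xsp$ over $X$, exhibiting $\pi$ as a (possibly ramified) Galois cover with group $G$. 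Since $\widehat L$ is normal over $\mathbb C(X)$, all $d_i$ embeddings $K_i\hookrightarrow\widehat L$ are defined; each of them, using the normality of $\xsp$ and the finiteness of $V_i\to X$, induces an $X$-morphism $\xsp\to V_i\hookrightarrow\mathrm{Tot}(E)$, i.e.\ a section of $\pi^*E$ over $\xsp$, hence an element of $H^0(\xsp,\pi^*E)$. Running over all $i$ and all embeddings, and repeating each $V_i$ with its multiplicity $n_i$, yields $\eta_1,\dots,\eta_m\in H^0(\xsp,\pi^*E)$ with $\sum_i n_id_i=m$. Comparing cycles over the dense open set $\pi^{-1}(X')$ — where both sides are the union of the pulled-back local graphs — gives $\pi^*Z_\eta^{\flat}=\Gamma_{\eta_1}+\cdots+\Gamma_{\eta_m}$, i.e.\ $\pi^*\eta$ is single-valued; and since $\pi^*E$ is $G$-equivariant (being pulled back from $X$), the action of $G$ on $\xsp$ induces one on $H^0(\xsp,\pi^*E)$ which permutes the embeddings and hence permutes $\{\eta_1,\dots,\eta_m\}$.

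The hard part is the finiteness of $Z\to X$ in Step 1: this is exactly where the hypothesis that the $\sigma_i$ extend over $X$ is indispensable, and the point that makes it work is the elementary but not-immediately-obvious observation that the pure-power coefficients $e_j^m$ of $P_\eta(\tau)$ already provide monic integral equations for the fibre coordinates — so no compactification of $\mathrm{Tot}(E)$ is needed and no vertical components can occur. Everything afterwards (the Zariski-closure step, and the Galois-theoretic splitting over $\xsp$) is routine, the only care being the bookkeeping of multiplicities so that exactly $m$ sections appear on $\xsp$.
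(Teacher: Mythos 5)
Your Step 2 (Galois closure of the function fields and normalization of $X$ inside it) is a perfectly good alternative to the paper's route via the monodromy representation $\pi_1(X^\circ)\to\mathfrak S_m$, and your monic-equation argument for the finiteness of $Z\to X$ is a nice explicit justification of a point the paper only asserts. The gap is in Step 1, in the passage from $Z_\eta$ over $X'$ to $Z_\eta^{\flat}$ over $X$. The set $X'$ is only assumed to be \emph{topologically} dense and open (in the application it is the regular set of a harmonic map, whose complement is not yet known to be Zariski closed), so the components $Z_i$ of $Z_\eta$ are analytic sheets over $X'$ that can be glued to one another by monodromy around $X\setminus X'$. Concretely: take $X=\bP^1$, $E=\sO(1)$, $\sigma_1=0$, $\sigma_2=-f$ with $f\in H^0(\sO(2))$ having two simple zeros, and let $X'$ be the complement of an arc joining the two zeros. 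Then $Z=\{t^2=f\}$ is irreducible of degree $2$ over $X$, while $Z_\eta=Z_1+Z_2$ consists of the two single-valued branches $\pm\sqrt f$ over $X'$. The Zariski closure of each $Z_i$ is all of $Z$, so $Z_\eta^{\flat}=2Z$ does \emph{not} restrict to $Z_\eta$ over $X'$, the degree count $\sum_i n_i\deg(\overline{Z_i}/X)=4\neq m=2$ fails, and your recipe in Step 2 would manufacture four sections on $\xsp$ instead of two. (The topological closure of $Z_i$ is not even an analytic set here, so no reading of ``closure'' saves the step.)

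The paper avoids this by never decomposing over $X'$: it works with the full zero scheme $Z$ of $P_\eta(\lambda)$, passes to the normalization $Z^{\rm norm}\to X$, and replaces $X'$ by the \emph{Zariski-open} \'etale locus $X^\circ\supseteq X'$ of that map; the splitting and the monodromy (equivalently, your field embeddings) are read off over $X^\circ$, where the local sheets genuinely determine the global components. The remaining — and genuinely nontrivial — point, which your proposal does not address, is that $Z$ could a priori have irreducible components supported over proper closed subsets of $X$; the paper rules this out only at the very end, by showing that on the cover the identity $Q(\lambda')=\prod_i(\lambda'-\eta_i)$ propagates from $\nu^{-1}(Y^\circ)$ to all of $\pi^*E$ by continuity, so that $W=f^{-1}(Z)$ is a union of graphs and hence every component of $Z$ dominates $X$. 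If you reorganize Step 1 around $Z$ and $X^\circ$ in this way, your Step 2 goes through essentially verbatim and even simplifies the paper's extension argument for the $\eta_i$ (integrality over $\sO_X$ plus normality of $\xsp$ replaces the root--coefficient bounds).
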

\begin{dfn}[Spectral cover]
	The above Galois cover $\pi$ is called the \emph{spectral cover} of $X$ with respect to $\eta$.  
\end{dfn}

\begin{proof}
	Denote by $\mu:E\to X$ be projection map.	Let $\lambda\in H^0(E, \mu^*E)$ be the Liouville section defined by $\lambda(e)=e$ for any $e\in E$. Consider the section
	$$
	P_\eta(\lambda):= \lambda^m+\mu^*\sigma_1 \lambda^{m-1}+\cdots+ \mu^*\sigma_m \in H^0(E, \mu^*\Sym^mE).
	$$  
	Let $Z\subset E$ be  the zero locus  of $P_\eta(\lambda)$ (here we count multiplicities).  By assumption, one can see that, $Z|_{X'}=Z_\eta$.   Moreover, $\mu|_{Z}:Z\to X$ is a finite morphism.  To show that $Z$ is a multivalued section of $E$, we need to prove that, for each irreducible component $Z_i$ of $Z$, $\mu|_{Z_i}:Z_i\to X$ is surjective. 
	
	Let $Z^{\rm norm}$ be the normalization of $Z$ which might not be connected. Then  the natural morphism $q: Z^{\rm norm}\to X$ is finite. Consider the locus $X^\circ$ of $X$ such that  $q$  is \'etale. Then $X^\circ$ is a Zariski dense open set of $X$.  One can see that $X^\circ\supset X'$.   Set $Z^{\rm norm}_\circ:=q^{-1}(X^\circ)$  and $Z^\circ:=(\mu|_Z)^{-1}(X^\circ)$. Note that  $Z^{\rm norm}_\circ$ is the normalization of $Z^\circ$.
	\begin{claim}\label{claim:extended section}
		$\eta$ extends to a splitting multivalued section of $E$ on $X^\circ$.  
	\end{claim}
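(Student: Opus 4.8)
The plan is to produce the local splitting directly from the fact that $q\colon Z^{\rm norm}_\circ\to X^\circ$ is finite and étale. Let $\nu\colon Z^{\rm norm}\to E$ be the composite of the normalization map $Z^{\rm norm}\to Z$ with the inclusion $Z\hookrightarrow E$, so that $\mu\circ\nu=q$. Fix $x\in X^\circ$ and pick a connected, simply connected open neighborhood $\Omega_x\subset X^\circ$ of $x$ over which $E$ is trivial. Since $q$ is finite and étale, over $\Omega_x$ it is a trivial covering, say $q^{-1}(\Omega_x)=\bigsqcup_{j=1}^{N}W_j$ with each $q|_{W_j}\colon W_j\xrightarrow{\sim}\Omega_x$ a biholomorphism. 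Set $\omega_j:=\nu\circ(q|_{W_j})^{-1}\colon\Omega_x\to E|_{\Omega_x}$; since $\mu\circ\omega_j=q\circ(q|_{W_j})^{-1}=\mathrm{id}_{\Omega_x}$, each $\omega_j$ is a holomorphic section of $E$ over $\Omega_x$, and $\nu$ restricts to a biholomorphism $W_j\xrightarrow{\sim}\mathrm{graph}(\omega_j)$, which lies in the irreducible component $Z_{i(j)}$ of $Z$ whose normalization contains $W_j$.

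I would then check that, as a cycle in $E|_{\Omega_x}$, the restriction $Z|_{\Omega_x}$ is the graph of these sections with the multiplicities coming from $Z$. Set-theoretically $\bigcup_{j}\mathrm{graph}(\omega_j)=\nu\bigl(q^{-1}(\Omega_x)\bigr)=(Z_{\rm red})|_{\Omega_x}$, since the normalization is surjective onto $Z_{\rm red}$; and distinct sheets give distinct graphs, because otherwise $\nu$ would be at least two-to-one over an entire $(\dim X)$-dimensional submanifold, contradicting the fact that $\nu$ is birational on each component of $Z$ (hence its non-injectivity locus has dimension $<\dim X$). So each $\mathrm{graph}(\omega_j)$ is an irreducible component of $(Z_{\rm red})|_{\Omega_x}$, occurring there with multiplicity one, and — being smooth — occurring in the cycle $Z=\sum_i m_iZ_i$ with multiplicity $m_{i(j)}$. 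Therefore
\[
Z|_{\Omega_x}\;=\;\sum_i m_i\,Z_i|_{\Omega_x}\;=\;\sum_{j=1}^{N}m_{i(j)}\,[\mathrm{graph}(\omega_j)],
\]
which is the graph of the finite family of holomorphic sections obtained by repeating each $\omega_j$ exactly $m_{i(j)}$ times. Since every $W_j$ surjects onto $\Omega_x$ and $X^\circ$ is connected, each irreducible component of $Z^\circ:=(\mu|_Z)^{-1}(X^\circ)$ surjects onto $X^\circ$, so $Z^\circ$ is a multivalued section of $E$ over $X^\circ$ in the sense of \cref{def:multivalued}; the displayed identity shows it is splitting; and since $X'\subset X^\circ$ with $Z|_{X'}=Z_\eta$, it restricts to $\eta$. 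This is the required extension.

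The step I expect to be the main (though fairly soft) obstacle is the injectivity-and-multiplicity bookkeeping in the displayed identity: one must be sure that over $X^\circ$ every local branch of $Z$ really is the graph of a single honest section and is weighted by the multiplicity it carries as a component of the cycle $Z$. This rests on the local triviality of the finite étale cover $Z^{\rm norm}_\circ\to X^\circ$ together with the dimension bound on the non-injectivity locus of $\nu$; the rest is a routine unwinding of the normalization and of \cref{def:multivalued}.
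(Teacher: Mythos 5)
Your argument is correct and follows essentially the same route as the paper's proof: over $X^\circ$ the finite map $q\colon Z^{\rm norm}_\circ\to X^\circ$ is \'etale, hence locally trivial, and composing the resulting local sections of $q$ with the normalization map $Z^{\rm norm}\to Z\subset E$ yields holomorphic sections of $E$ over $\Omega_x$ whose graph recovers $Z|_{\Omega_x}$. The only difference is that you make explicit the distinct-sheet and multiplicity bookkeeping that the paper leaves implicit; this is sound but not a genuinely different argument.
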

	\begin{proof} 
		Note that $q: Z^{\rm norm}_\circ\to X^\circ$ is \'etale of degree $m$. 	Hence for every $x\in X^\circ$, it has neighborhood $\Omega_x$ such that  $q^{-1}(\Omega_x)$ is isomorphic to $m$ copy of $\Omega_x$. Thus it gives rise to $m$ natural local sections $s_1,\ldots,s_m:\Omega_x\to Z^{\rm norm}_\circ$  of  $q: Z^{\rm norm}\to X$ such that $s_1(\Omega_x),\ldots,s_m(\Omega_x)$ correspond  to $m$ components of $q^{-1}(\Omega_x)$. Let $\nu_Z:Z^{\rm norm}\to Z$ be the normalization map. Then $\{\nu_Z\circ s_i: \Omega_x\to Z\subset E\}_{i=1,\ldots,m}\subset H^0(\Omega_x, E|_{\Omega_x})$.  Note that the graph of $ \{\nu_Z\circ s_1,\ldots,\nu_Z\circ s_m\}$ is $Z|_{\Omega_x}$.     The claim is proved. 
	\end{proof}
	We still denote by   $\eta$    the extended multivalued section of $E|_{X^\circ}$.  
	\begin{claim}\label{claim:monodromy}
		The \'etale morphism $q|_{Z_\circ^{\rm norm}}:Z^{\rm norm}_\circ\to X^\circ$ gives rise to  a  representation $\phi:\pi_1(X^\circ)\to\mathfrak{S}_m$ where $\mathfrak{S}_m$ is  the symmetric group of $m$ elements. Let $\pi:Y^\circ\to X^\circ$   be the Galois \'etale cover  corresponding to the finite index subgroup $\ker \phi$ of $\pi_1(X^\circ)$. Then 
		\begin{itemize}
			\item  the normalization of the base change  $Z^\circ\times_{X^\circ} Y^\circ$ is a quasi-projective   variety with $m$ connected component such that each component is isomorphic to $Y^\circ$ under the natural map $(Z^\circ\times_{X^\circ} Y^\circ)^{\rm norm}\to Y^\circ$.  
			\item There are sections  $\{\eta_1,\ldots,\eta_m\}\subset H^0(Y^\circ, \pi^*E)$ such that $\{\eta_1,\ldots,\eta_m\} =\pi^*\eta$.
			\item  $G$ acts on $\{\eta_1,\ldots,\eta_m\}$ as a permutation. 
		\end{itemize} 
	\end{claim}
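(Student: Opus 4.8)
The plan is to carry out the classical monodromy (splitting-cover) construction for the finite \'etale cover $q: Z^{\rm norm}_\circ\to X^\circ$, which has degree $m$ by the proof of \cref{claim:extended section}. First I would fix a base point $x_0\in X^\circ$ together with a labelling $q^{-1}(x_0)=\{p_1,\dots,p_m\}$ of its fibre; lifting loops then yields the monodromy homomorphism $\phi:\pi_1(X^\circ,x_0)\to\mathfrak S_m$, which is transitive on the fibre over $x_0$ of each connected component of $Z^{\rm norm}_\circ$ but not necessarily on all of $q^{-1}(x_0)$. A different labelling conjugates $\phi$, so $\phi$ is only well defined up to conjugacy, but its kernel $\ker\phi$ is a well-defined normal subgroup of finite index in $\pi_1(X^\circ)$; let $\pi: Y^\circ\to X^\circ$ be the corresponding connected \'etale cover. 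By the Galois correspondence for covering spaces it is finite Galois with group $G=\pi_1(X^\circ)/\ker\phi\simeq\phi(\pi_1(X^\circ))\subseteq\mathfrak S_m$, and since $X^\circ$ is quasi-projective so is $Y^\circ$.

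Next I would study the base change along $\pi$. As $\pi$ is \'etale, normalisation commutes with it, so $(Z^\circ\times_{X^\circ}Y^\circ)^{\rm norm}\simeq Z^{\rm norm}_\circ\times_{X^\circ}Y^\circ=:W$, a quasi-projective normal scheme that is finite \'etale of degree $m$ over $Y^\circ$. The connected components of $W$ are the orbits of $\pi_1(Y^\circ)$ acting on $q^{-1}(x_0)$; this action is the restriction of $\phi$ to $\ker\phi$, hence trivial, so $W$ has exactly $m$ components $W_1,\dots,W_m$ and each $W_j\to Y^\circ$ is finite \'etale of degree $1$, therefore an isomorphism --- this is the first bullet. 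For the second bullet, for each $j$ I let $\psi_j: Y^\circ\xrightarrow{\ \sim\ }W_j$ be the inverse isomorphism and set $\eta_j:=\nu_Z\circ{\rm pr}_1\circ\psi_j: Y^\circ\to Z\hookrightarrow E$, where ${\rm pr}_1: W\to Z^{\rm norm}_\circ$ is the first projection and $\nu_Z: Z^{\rm norm}\to Z$ the normalisation. Because the two projections of the fibre product $W$ down to $X^\circ$ agree, one checks $\mu\circ\eta_j=\pi$; hence, under the identification $\pi^*E\simeq E\times_{X^\circ}Y^\circ$, the pair $(\eta_j,{\rm id}_{Y^\circ})$ is a holomorphic section of $\pi^*E$ over $Y^\circ$, and its graph in $\pi^*E$ is precisely $W_j$. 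Since the pullback cycle $\pi^*Z_\eta$ is carried by $Z^\circ\times_{X^\circ}Y^\circ$, whose normalisation is $\bigsqcup_j W_j=\bigsqcup_j{\rm graph}(\eta_j)$, we obtain $\pi^*\eta=\{\eta_1,\dots,\eta_m\}$ in the sense of \cref{def:multivalued}.

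For the third bullet, $G$ acts on $Y^\circ$ over $X^\circ$ by deck transformations, hence by functoriality of base change along $\pi$ it acts compatibly on $\pi^*E$ and on $Z^\circ\times_{X^\circ}Y^\circ$, trivially on the $Z^\circ$-factor, and this action permutes the components $W_1,\dots,W_m$. Under the bijection between the set of components of $W$ and $q^{-1}(x_0)$, this permutation action of $G$ is exactly the restriction to $G=\phi(\pi_1(X^\circ))$ of the standard action of $\mathfrak S_m$ on $\{p_1,\dots,p_m\}$; transporting through the $\psi_j$ shows that each $g\in G$ sends ${\rm graph}(\eta_j)$ to ${\rm graph}(\eta_{g\cdot j})$, i.e.\ $g\cdot\eta_j=\eta_{g\cdot j}$, so $G$ permutes $\{\eta_1,\dots,\eta_m\}$. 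The work here is bookkeeping rather than conceptual: the main points to track carefully are that normalisation commutes with the \'etale base change $\pi$, that each $\eta_j$ lands in $\pi^*E$ with the correct structural morphism to $Y^\circ$, and that the deck action on $Y^\circ$, on $Z^\circ\times_{X^\circ}Y^\circ$ and on $\pi^*E$ all induce one and the same permutation action $\phi$; once these compatibilities are pinned down, the three bullets follow.
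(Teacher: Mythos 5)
Your proposal is correct and follows essentially the same route as the paper: both define $\phi$ as the monodromy of the degree-$m$ \'etale cover $Z^{\rm norm}_\circ\to X^\circ$ (the paper phrases it as transport of the local branches $\{s_1,\dots,s_m\}$, which is the same data as the fibre $q^{-1}(x_0)$), pass to the Galois cover attached to $\ker\phi$, and obtain the $m$ single-valued sections whose graphs are the components of the normalized base change. The only difference is organizational — you first split $(Z^\circ\times_{X^\circ}Y^\circ)^{\rm norm}$ into $m$ copies of $Y^\circ$ and then read off the $\eta_j$, while the paper constructs the $\eta_j$ by analytic continuation and then identifies the base change with their graphs — and your bookkeeping (normalization commuting with \'etale base change, the deck action) is sound.
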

	\begin{proof}
		We fix a base point $x_0\in X'$. There exists an open   neighborhood $\Omega_{x_0}$ of $x_0$ such that,  the multivalued section $\eta|_{\Omega_{x_0}}$ is given  by  sections $\{s_1,\ldots,s_m\}\subset H^0(\Omega_{x_0},E|_{\Omega_{x_0}})$.   Consider any loop $\gamma$ of $X^\circ$ based at $x_0$. Since $q|_{Z_\circ^{\rm norm}}:Z^{\rm norm}_\circ\to X^\circ$ is \'etale, by \cref{def:multivalued}, the  transport of  $\{s_1,\ldots,s_m\}$    along $Z^{\rm norm}_\circ|_{\gamma}$ gives a  permutation of $\{s_1,\ldots,s_m\}$ hence an element in the symmetric group $\mathfrak{S}_m$ of $m$ elements. We can see that it only depends  on the choice of homotopy class of $\gamma$ and thus it corresponds to a representation $\phi:\pi_1(X^\circ)\to\mathfrak{S}_m$. Let $\pi: Y^\circ\to X^\circ$  be the Galois \'etale cover with the Galois group $G:=\pi_1(X^\circ)/\ker \phi$. Then for any loop $\gamma\in \pi_1(Y^\circ)$,  the transport of $\{s_1,\ldots,s_m\}$ along  $Z^{\rm norm}_\circ\times_{X^\circ}Y^\circ|_{\gamma}$ is a trivial permutation, which thus gives rise to holomorphic sections $\{\eta_1,\ldots,\eta_m\}\subset H^0(Y^\circ, \pi^*E)$. It follows that    $\pi^*\eta=\{\eta_1,\ldots,\eta_m\}$.  One can see that $G$ acts on $\{\eta_1,\ldots,\eta_m\}$ as a permutation. 
		
		Let $W^\circ\subset \pi^*E$ be the graph variety of $\{\eta_1,\ldots,\eta_m\}$. One can see that $W^\circ$ coincides with $Z^\circ\times_{X^\circ}Y^\circ$.  Hence the normalization $(Z^\circ\times_{X^\circ}Y^\circ)^{\rm norm}$ is isomorphic to $m$ copy of $Y^\circ$.  The claim is proved. 
	\end{proof}
	Note that $\pi: Y^\circ\to X^\circ$ extends to a ramified Galois cover $Y\to X$ with the Galois group $G$, where $Y$ is a projective normal variety.  We still denote by $\pi:Y\to X$ the extended  cover.  
	Let $\nu:\pi^*E\to Y$ the natural projection map. We have the following commutative diagram:
	\[
	\begin{tikzcd}
		\pi^*E\arrow[r,"f"] \arrow[d,"\nu"]& E\arrow[d,"\mu"]\\
		Y\arrow[r,"\pi"] &X
	\end{tikzcd} 
	\]  Let $\lambda'\in H^0(\pi^*E, \nu^*\pi^*E)$ be the Liouville section.  Consider the section
	$$
	Q(\lambda'):=	\lambda'^{m}+\nu^*\pi^*\sigma_1\lambda'^{m-1}+\cdots+\nu^*\pi^*\sigma_m\in H^0(\pi^*E, \nu^*\pi^*{\rm Sym}^mE).
	$$
	\begin{claim}
		Each $\eta_i$ extends to a holomorphic section in $H^0(Y,\pi^*E)$, which we continue to denote by $\eta_i$.
	\end{claim}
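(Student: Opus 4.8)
The plan is to exhibit each $\eta_i$ as the inverse of a finite birational projection and then invoke normality of $Y$. Let $\Gamma_i^\circ\subset\pi^*E|_{Y^\circ}$ be the graph of the section $\eta_i\in H^0(Y^\circ,\pi^*E|_{Y^\circ})$ produced in \cref{claim:monodromy}, and let $\Gamma_i\subset\pi^*E$ be the Zariski closure of $\Gamma_i^\circ$ in the total space of $\pi^*E$, equipped with its reduced structure. Since $\Gamma_i^\circ$ is a graph, $\nu|_{\Gamma_i^\circ}\colon\Gamma_i^\circ\to Y^\circ$ is an isomorphism, so it suffices to show that $\nu|_{\Gamma_i}\colon\Gamma_i\to Y$ is an isomorphism; then $\eta_i:=(\Gamma_i\hookrightarrow\pi^*E)\circ(\nu|_{\Gamma_i})^{-1}\in H^0(Y,\pi^*E)$ is the desired extension.

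The key step is finiteness. Since $\pi^*E=E\times_X Y$ and $f\colon\pi^*E\to E$ is the first projection, a direct evaluation shows that $Q(\lambda')$ vanishes at a point $(e,y)$ (with $e\in E_{\pi(y)}$) if and only if $P_\eta(\lambda)$ vanishes at $e$; hence the zero locus $W$ of $Q(\lambda')$ equals $f^{-1}(Z)=Z\times_X Y$. As $\mu|_Z\colon Z\to X$ is finite, its base change $\nu|_W\colon W\to Y$ is finite as well. Now $\Gamma_i^\circ\subset W|_{Y^\circ}$ and $W$ is closed in $\pi^*E$, so $\Gamma_i\subset W$; being a closed subvariety of $W$, $\Gamma_i$ is finite over $Y$ via $\nu$.

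It remains to conclude. The cover $\pi\colon Y^\circ\to X^\circ$ is connected, being Galois with group $G=\pi_1(X^\circ)/\ker\phi$, so $\Gamma_i^\circ\cong Y^\circ$ is irreducible and $\Gamma_i$ is an integral variety. The finite morphism $\nu|_{\Gamma_i}\colon\Gamma_i\to Y$ is surjective, since its image is closed and contains the dense set $Y^\circ$, and it is birational because it restricts to an isomorphism over $Y^\circ$. As $Y$ is normal, a finite birational morphism onto $Y$ from an integral variety is an isomorphism, so $\nu|_{\Gamma_i}$ is an isomorphism, yielding $\eta_i\in H^0(Y,\pi^*E)$ extending the original section. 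Moreover $G$ acts on $\pi^*E$ over $Y$ and permutes the $\eta_i$ over $Y^\circ$, hence permutes the closures $\Gamma_i$; thus $\{\eta_1,\dots,\eta_m\}$ remains a single $G$-orbit, and comparing with $Q(\lambda')$ on the dense open $\nu^{-1}(Y^\circ)$ gives $Q(\lambda')=\prod_{i=1}^m(\lambda'-\eta_i)$, i.e. $\pi^*\eta=\{\eta_1,\dots,\eta_m\}$ as a splitting multivalued section of $\pi^*E$ over all of $Y$.

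The only genuinely substantive point is the finiteness of $\nu|_W\colon W\to Y$, which I handle via the identification $W=Z\times_X Y$ coming from the cartesian square $\pi^*E=E\times_X Y$; once this is granted, the rest is formal, the only routine verifications being that $\Gamma_i$ is reduced and irreducible, that $\Gamma_i^\circ$ is dense in $\Gamma_i$, and that the coefficients $\sigma_k$ pull back compatibly under the bundle maps, all immediate from the construction.
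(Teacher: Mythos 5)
Your argument is correct, but it takes a genuinely different route from the paper's. The paper argues locally and analytically: it pairs the $\eta_i$ with a local frame of the dual bundle to get holomorphic functions $f_i$ on $U^\circ=U\cap Y^\circ$, observes that their elementary symmetric functions are the restrictions of the everywhere-defined coefficients $\pi^*\sigma_d(e)$, invokes the classical bound on the roots of a monic polynomial in terms of its coefficients to get $\sup_{U^\circ}|f_i|\leq M$, and then extends each $f_i$ across the analytic set $U\setminus U^\circ$ by the Riemann extension theorem on the normal space $Y$. You instead argue globally and geometrically: the graph closure $\Gamma_i$ sits inside the spectral variety $W=f^{-1}(Z)=Z\times_X Y$, which is finite over $Y$ by base change, so $\nu|_{\Gamma_i}$ is finite and birational onto the normal variety $Y$ and hence an isomorphism by Zariski's main theorem, giving the extended section. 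Both proofs hinge on the normality of $Y$, just through different classical theorems (Riemann extension versus finite birational onto normal implies isomorphism), and your version has the side benefit of delivering at the same time the identity $Q(\lambda')=\prod_i(\lambda'-\eta_i)$ on all of $\pi^*E$ and the surjectivity of each component of $W$ onto $Y$, which the paper obtains afterwards by a continuity argument. One point you gloss over: since the $\eta_i$ are a priori only holomorphic on $Y^\circ$, the ``Zariski closure'' of $\Gamma_i^\circ$ needs a word of justification as an analytic (or algebraic) set; this is easily supplied by noting that $\Gamma_i^\circ$ is an irreducible component of $W_{\rm red}\cap\nu^{-1}(Y^\circ)$ whose closure is a component of $W_{\rm red}$ dominating $Y$ (equivalently, by Remmert--Stein inside $W$, using that $W\cap\nu^{-1}(Y\setminus Y^\circ)$ has dimension $<\dim Y$ by finiteness), but it should be said.
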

	\begin{proof}
		Let $y_0 \in Y$ and $U$ be an open neighborhood of $y_0$ such that $\pi^*E|_U$ is trivial. Pick any section $e \in \pi^*E^*(U)$. Then $f_i := \eta_i(e)$ is a holomorphic function on $U^\circ := U \cap Y^\circ$. Let $P_d(X_1, \ldots, X_m)$ denote the elementary symmetric polynomial of degree $d$ in $m$ variables, so that
		\[
		\prod_{i=1}^{m}(T - X_i) = T^m + P_1(X_1, \ldots, X_m) T^{m-1} + \cdots + P_m(X_1, \ldots, X_m).
		\]
		We then have 
		\[
		P_d(f_1, \ldots, f_m) = \pi^*\sigma_d(e)|_{U^\circ}.
		\]
		After shrinking $U$, we may assume there exists a constant $M > 0$ such that 
		\[
		2 \max_{d\in \{1,\ldots,m\}} \sup_{y \in U} |\pi^*\sigma_d(e)(y)| ^{\frac{1}{d}} \leq M.
		\] 
		By the classical inequalities between the norms of the roots and coefficients of a polynomial, we deduce that $\max_{d\in \{1,\ldots,m\}}  \sup_{y \in U^\circ} |f_d(y)| \leq M$. Since $Y$ is normal and $U \setminus U^\circ$ is an analytic subset of $U$, it follows that each $f_i$ extends to a holomorphic function on $U$. Hence, $\eta_i$ extends to a holomorphic section in $H^0(Y, \pi^*E)$. This proves the claim.
	\end{proof} 
 Let $W\subset \pi^*E$  be the zero scheme of  $Q'(\lambda')$.  Note that $W|_{\nu^{-1}(Y^\circ)}=W^\circ$, that is   the graph variety of $\{\eta_1,\ldots,\eta_m\}$.  Therefore, over $\nu^{-1}(Y^\circ)$,  we have
	$$
	Q(\lambda')=\prod_{i=1}^{m}(\lambda'-\eta_i). 
	$$
	By continuity, it follows that 
	the above equality holds over the whole $\pi^*E$.  Thus, each irreducible component of $W$ corresponds to the graph of some $\eta_i$, which is therefore mapped surjectively onto $Y$. Since  we have
	$Q(\lambda')=f^*P_\eta(\lambda)$,  $W$ is equal to the scheme theoretic inverse image $f^{-1}(Z)$.  Consequently, each irreducible component of $Z$ is mapped surjectively onto $X$. Hence, $Z$ forms a multivalued section of $E \to X$. We write $\pi: \xsp \to X$ to denote $\pi: Y \to X$. This completes the proof of the proposition. 
\end{proof} 
\subsection{Invariant 1-forms on Bruhat-Tits buildings} \label{subsec:building}
Let $G$ be a reductive algebraic group over a non-archimedean local field. 
Then  $G$ induces   a real Euclidean space $V$ endowed with a Euclidean metric and  an affine Weyl group $W$ acting on $V$ isometrically.  Such group $W$ is a semidirect product   $T\rtimes W^v$,  where $W^v$ is the \emph{vectorial Weyl group}, which is a finite group generated by reflections on $V$, and $T$ is a translation group of $V$.

For any apartment $A$ in $ \Delta(G)$,  there exists an isomorphism $i_A:A\to V$, which is  called a chart. For two charts $i_{A_1}:A_1\to V$ and $i_{A_2}:A_2\to V$, if $A_1\cap A_2\neq\varnothing$,   it satisfies the following properties:
\begin{enumerate}[label=(\alph*)]
	\item $Y:=i_{A_2}(i_{A_1}^{-1}(V))$ is convex.
	\item \label{item:weyl}There is an element $w\in W$ such that $w\circ i_{A_1}|_{A_1\cap A_2}=i_{A_2}|_{A_1\cap A_2}$.
\end{enumerate} 
Let us fix  orthonormal coordinates $(x_1,\ldots,x_N)$ for $V$.  Since $W^v\subset \GL(V)$ acts on $V$ isometrically,  for any $w\in W^v$, $(w^*x_1,\ldots,w^*x_N)$ are orthonormal coordinates  for $V$. We define a subset of $V^*$ by setting
\begin{align}\label{eq:Phi}
	 \Phi:=\{w^*x_i \}_{i\in  \{1,\ldots,N\}; w\in W^v}.
\end{align}
  Since $W^v$ is a finite group, then $\Phi$ is a finite set. 
Note that  $\Phi$ is invariant under the  action by $W^v$.  We   write 
$ \Phi=\{\beta_1,\ldots,\beta_m\}. 
$ 

We define  real affine functions 
\begin{align}\label{eq:beta}
	 \beta_{A,i}:=\beta_i\circ i_{A}(x) 
\end{align}
on $A$ for each $i$.  
\begin{lem}
	If $A_1\cap A_2\neq\varnothing$, then we have $$\{d\beta_{A_1,1},\ldots,d\beta_{A_1,m}\}|_{A_1\cap A_2}=\{d\beta_{A_2,1},\ldots,d\beta_{A_2,m}\}|_{A_1\cap A_2}.$$
\end{lem}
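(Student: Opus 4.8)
The plan is to reduce the statement to the chart‑transition rule together with the $W^v$‑invariance of the finite set $\Phi$ from \eqref{eq:Phi}. First I would invoke property~(b) of the chart transitions: since $A_1\cap A_2\neq\varnothing$, there is $w\in W$ with $w\circ i_{A_1}|_{A_1\cap A_2}=i_{A_2}|_{A_1\cap A_2}$. Using the semidirect decomposition $W=T\rtimes W^v$, write $w=t\circ\bar w$ with $t$ a translation of $V$ and $\bar w\in W^v$ a linear isometry. Then on $A_1\cap A_2$, for each $i$,
\[
\beta_{A_2,i}=\beta_i\circ i_{A_2}=\beta_i\circ t\circ\bar w\circ i_{A_1}.
\]

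Next I would unwind the right‑hand side. Each $\beta_i\in V^*$ is linear, so $\beta_i\circ t=\beta_i+c_i$ for a constant $c_i\in\bR$, while $\beta_i\circ\bar w=\bar w^*\beta_i$. Because $\Phi$ is invariant under $W^v$ and $\bar w$ is invertible, $\bar w^*$ restricts to a bijection of the finite set $\Phi=\{\beta_1,\dots,\beta_m\}$, so there is a permutation $\tau\in\mathfrak S_m$ with $\bar w^*\beta_i=\beta_{\tau(i)}$. Transporting everything to $A_1$ by the affine isometry $i_{A_1}$ (which identifies $1$‑forms on $A_1\cap A_2$ with $1$‑forms on the convex set $i_{A_1}(A_1\cap A_2)\subset V$), the function $\beta_{A_2,i}$ therefore coincides with $\beta_{A_1,\tau(i)}$ up to the additive constant $c_i$. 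Since the exterior derivative kills constants and, under the canonical identification $T_vV\cong V$, sends a linear functional $\beta\in V^*$ to the constant $1$‑form $\beta$, we get $d\beta_{A_2,i}|_{A_1\cap A_2}=d\beta_{A_1,\tau(i)}|_{A_1\cap A_2}$ for every $i$. Taking the union over $i$ and using that $\tau$ is a permutation yields the claimed equality of sets of $1$‑forms.

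The only point requiring a bit of care, which I would make explicit, is that the two families genuinely live on the same space for the comparison: $A_1\cap A_2$ is a closed convex subset of each apartment, $i_{A_1}$ is an affine isometric embedding onto a convex subset of $V$, and $d$ commutes with pullback by $i_{A_1}$, so the argument goes through even if $A_1\cap A_2$ is not of full dimension. Beyond this, everything is bookkeeping with the semidirect product $W=T\rtimes W^v$ and the definition \eqref{eq:beta} of $\beta_{A,i}$, so I do not expect a genuine obstacle.
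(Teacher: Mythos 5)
Your proposal is correct and follows essentially the same route as the paper: both use property (b) of chart transitions to get $w\in W$, decompose $w$ via $W=T\rtimes W^v$ into a translation plus a vectorial part, use the $W^v$-invariance of $\Phi$ to produce a permutation, and note that the exterior derivative kills the additive constants. The paper's version is just a more condensed statement of the same bookkeeping (its equation \eqref{eq:compatible} is exactly your $\beta_{A_2,k}=\beta_{A_1,\sigma(k)}-a_k$).
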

\begin{proof}
	 By \Cref{item:weyl}, there exists an element $w\in W$ such that  
	 $
	 \beta_k\circ i_{A_2}|_{A_1\cap A_2}=  \beta_k\circ w\circ i_{A_1}|_{A_1\cap A_2} 
	 $ 
	 for any $k=1,\ldots,m$. 
	 Recall that $W^v$ permutes $\Phi$. It follows that there exist $a_1,\ldots,a_m\in \bR$ 
	 and  a permutation $\sigma$ of $m$-elements   such that
	 \begin{align}\label{eq:compatible} 
	 	\beta_k\circ i_{A_2}|_{A_1\cap A_2}=  \beta_k\circ w\circ i_{A_1}|_{A_1\cap A_2} = \beta_{\sigma(k)}\circ i_{A_1}|_{A_1\cap A_2} -a_k
	 \end{align} 
	 for any $k=1,\ldots,m$.   This implies the lemma. 
\end{proof}

\subsection{Mutivalued 1-forms and spectral 1-forms}
We prove \cref{main:spectral}, except for \cref{item:unicity}, whose proof is defered to \cref{sec:unicity}.
\begin{thm} \label{thm:spectral}
	Let  $(\overline{X},\Sigma)$ be a smooth log pair. Let $\varrho$, $G$ and $\tilde{u}$  be as in \cref{main:harmonic}. Then 
	\begin{thmlist}
		\item \label{item:extend}the pluriharmonic map $\tilde{u}$ induces a multivalued logarithmic 1-form $\eta$ on the log pair $(\overline{X},\Sigma)$. 
		\item There exists a ramified Galois cover  $\pi:\overline{\xsp}\to \overline{X}$   such that  $\pi^*\eta$ becomes single-valued; i.e., $\pi^*\eta:= \{\omega_1,\ldots,\omega_m\}\subset H^0(\overline{\xsp},\pi^*\Omega_{ \overline{X}}(\log \Sigma))$. 
		\item Denote by $\xsp=\pi^{-1}(X)$, and let $\Sigma_1:=\overline{\xsp}\backslash \xsp$.  Let $\mu:\overline{Y}\to \overline{\xsp}$ be a  log resolution of $(\overline{\xsp},\Sigma_1)$, with $\Sigma_Y:=\mu^{-1}(\Sigma_1)$ a simple normal crossing divisor.   Then  logarithmic forms  $\{\mu^*\omega_1,\ldots,\mu^*\omega_m\}$  are   \emph{pure imaginary}.
	\end{thmlist}
\end{thm}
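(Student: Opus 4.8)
\emph{Plan.} The strategy has three steps. First I construct $\eta$ over the regular set of $\tilde u$ out of the affine functions $\beta_{A,i}$ from \cref{subsec:building}. Concretely, at a regular point $x\in\mathcal R(\tilde u)$ I take a neighborhood $\mathcal N$ and an apartment $A$ with $\tilde u(\mathcal N)\subset A$; by \cref{phequiv} and the remark after it, under the chart $i_A$ the map $i_A\circ\tilde u$ is smooth with $\partial\bar\partial(i_A\circ\tilde u)=0$, so each $\beta_{A,i}\circ\tilde u$ is a real pluriharmonic function and $\partial(\beta_{A,i}\circ\tilde u)$ is a holomorphic $1$-form on $\mathcal N$. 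By the compatibility \eqref{eq:compatible} the multiset $\{\partial(\beta_{A,1}\circ\tilde u),\dots,\partial(\beta_{A,m}\circ\tilde u)\}$ is independent of $A$, so these local data glue to a splitting multivalued holomorphic $1$-form on $\mathcal R(\tilde u)$; since $\varrho(\gamma)$ carries apartments to apartments through the affine Weyl group $W=T\rtimes W^v$ and $\Phi$ is $W^v$-stable, this multiset is $\varrho$-invariant and descends to a splitting multivalued holomorphic $1$-form $\eta$ on $\mathcal R(u)=X\setminus\mathcal S(u)$, which by \cref{gs} is a dense open subset of $\overline X$.

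Next I extend $\eta$ and invoke \cref{prop:spectral}. Write $P_\eta(T)=T^m+\sigma_1T^{m-1}+\dots+\sigma_m$, so that $\sigma_i\in H^0(\mathcal R(u),\mathrm{Sym}^i\Omega_{\overline X}(\log\Sigma))$ is the $i$-th elementary symmetric function of the $\partial(\beta_{A,j}\circ u)$. Since $\tilde u$ is locally Lipschitz (\cref{thm:Lcon}, \cref{rem:Litschitz}) the coefficients of the $\partial(\beta_{A,j}\circ u)$, hence of $\sigma_i$, are locally bounded near $\mathcal S(u)\cap X$; as $\mathcal S(u)$ has Hausdorff codimension at least two, $\sigma_i$ extends holomorphically across $\mathcal S(u)$ by the Riemann--Shiffman extension theorem. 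Near a point of $\Sigma$ I restrict to a transverse (poly)disk: because $\sum_j\beta_j\otimes\beta_j$ is a $W^v$-invariant positive-definite form on $V$, $\sum_j|\partial(\beta_{A,j}\circ u)|^2$ agrees up to a constant with the energy density of $u$ along the disk, and the bound $E^{u_f}[\bD_{r,\frac12}]\le-\frac{L_\gamma^2}{2\pi}\log r+C$ from \cref{main:harmonic}~\cref{item:pullback} forbids a pole of order $\ge2$ for any $\partial(\beta_{A,j}\circ u)$ (an order-$k$ pole contributes energy of size $\sim r^{2-2k}$; compare the computation leading to \eqref{eq:computation}). Hence each $\partial(\beta_{A,j}\circ u)$ has at worst a logarithmic pole and $\sigma_i\in H^0(\overline X,\mathrm{Sym}^i\Omega_{\overline X}(\log\Sigma))$. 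Applying \cref{prop:spectral} with $E=\Omega_{\overline X}(\log\Sigma)$ and $X'=\mathcal R(u)$ yields \cref{item:extend}, the spectral cover $\pi\colon\overline{\xsp}\to\overline X$, and the Galois-permuted sections $\{\omega_1,\dots,\omega_m\}\subset H^0(\overline{\xsp},\pi^*\Omega_{\overline X}(\log\Sigma))$ with $\pi^*\eta=\{\omega_1,\dots,\omega_m\}$, which is the second assertion.

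For the third assertion, fix an irreducible component $\Sigma_{Y,k}$ of $\Sigma_Y$ and a disk $g\colon\bD\to\overline Y$ transverse to $\Sigma_{Y,k}$ at a general point, with $g^{-1}(\Sigma_Y)=\{0\}$, chosen so that $F:=\pi\circ\mu\circ g$ maps $\bD^*$ into $\mathcal R(u)$ and $F^{-1}(\Sigma)=\{0\}$. Then $g^*\mu^*\omega_j=a_j(z)\,\frac{dz}{z}$ with $a_j\in\sO(\bD)$ and $a_j(0)=\mathrm{Res}_{\Sigma_{Y,k}}\mu^*\omega_j$, and $\{a_1(z)\frac{dz}{z},\dots,a_m(z)\frac{dz}{z}\}=F^*\eta$ on $\bD$, so all $m$ branches of $F^*\eta$ are single-valued near $0$. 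On $\exp\colon\bH\to\bD^*$ the map $\tilde u\circ\widetilde F$ is pluriharmonic with logarithmic energy growth by \cref{main:harmonic}~\cref{item:pullback}, and I will use the key fact that near the puncture $\tilde u\circ\widetilde F$ maps into a single apartment $A$ on which the deck transformation $w\mapsto w+2\pi\sqrt{-1}$ acts, through $\varrho$, by a pure translation $T_0$ with $\|T_0\|=L_\gamma$ (where $\gamma$ is the loop of $X$ corresponding to $F$); passing to the spectral cover is precisely what makes this boundary monodromy purely translational. Then $a_j(z)\frac{dz}{z}=\partial\Psi_j$ for the real pluriharmonic function $\Psi_j=\beta_j\circ i_A\circ(\tilde u\circ\widetilde F)$ on $\bH$, whose period under $w\mapsto w+2\pi\sqrt{-1}$ is $\beta_j(T_0)=-4\pi\,\mathrm{Im}\,a_j(0)$. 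As in the proof of \cref{lem:char}, $E^{\Psi_j}[\bD_{r,\frac12}]=8\pi|a_j(0)|^2\log\frac1r+O(1)$, while $\sum_j E^{\Psi_j}[\bD_{r,\frac12}]$ equals, up to the constant comparing $\sum_j\beta_j\otimes\beta_j$ with the Euclidean form of $V$, the energy $E^{u_f}[\bD_{r,\frac12}]=-\frac{L_\gamma^2}{2\pi}\log r+O(1)$; since $\sum_j\beta_j(T_0)^2=\|T_0\|^2=L_\gamma^2$ up to the same constant, comparison of the leading terms forces $\sum_j|a_j(0)|^2=\sum_j\big(\mathrm{Im}\,a_j(0)\big)^2$, i.e.\ $\mathrm{Re}\,a_j(0)=0$ for every $j$. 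As this holds for every component $\Sigma_{Y,k}$ and every $j$, each $\mu^*\omega_j$ is pure imaginary.

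The decisive obstacle is the asymptotic description of a pluriharmonic map to $\Delta(G)$ with logarithmic energy growth near a transverse puncture: that it is asymptotic to a geodesic ray lying in a single apartment $A$, that $\varrho(\gamma)$ stabilizes $A$, and that on the spectral cover the induced boundary monodromy acts on $A$ by a pure translation of translation length $L_\gamma$. This is the genuine analytic input, combining the local Lipschitz regularity, the harmonicity, and the logarithmic energy bound; granting it, the last step reduces cleanly to the scalar computation of \cref{lem:char}. The remaining points — the polydisk version of the extension at crossings of $\Sigma$, the genericity of the transverse disk, and the bookkeeping of the positive constant relating $\sum_j\beta_j\otimes\beta_j$ to the Euclidean form of $V$ (handled factor by factor over the simple factors of $\sD G$) — are routine.
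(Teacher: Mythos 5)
Your treatment of the first two assertions follows the paper's route: the local $1$-forms $\partial(\beta_{A,j}\circ u)$ glued via the $W$-compatibility \eqref{eq:compatible}, extension of the symmetric functions $\sigma_i$ to $H^0(\overline X,\mathrm{Sym}^i\Omega_{\overline X}(\log\Sigma))$, and then \cref{prop:spectral}. (The paper delegates the extension of the $\sigma_i$ to \cite{BDDM}; your Lipschitz-plus-Shiffman and energy-versus-pole-order heuristics are in the right spirit, though you should also treat the reductive case by splitting $\Delta(G)=\Delta(\sD G)\times V$ as the paper does in its Step~2.)

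The genuine gap is in assertion (iii). Your argument rests on the ``key fact'' that near the puncture the map $\tilde u\circ\widetilde F$ lands in a single apartment $A$ on which the boundary monodromy acts by a pure translation $T_0$ with $\|T_0\|=L_\gamma$. You correctly flag this as the decisive input, but you do not prove it, and it is not available: $\varrho(\gamma)$ need not be semisimple, the infimum defining $L_\gamma$ need not be attained, and passing to the spectral cover does nothing to semisimplify the local monodromy --- the cover is designed only to make the multivalued $1$-form single-valued. The paper avoids this structural claim entirely. Its argument is: choose the transverse disk $g$ so that $(f\circ g)^{-1}(\cS(u))$ has Hausdorff dimension $0$; then by \cite[Corollary 1]{Shi68} almost every circle $\ell_r=\{|z|=r\}$ avoids the singular set, so the image of $\ell_r$ is a path from some point $P$ to $f^*\varrho(\gamma)P$ lying in the regular set, whose length is computed from the $1$-forms and involves only the angular derivative. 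This yields the \emph{upper} bound
\begin{equation*}
L_{f^*\varrho(\gamma)}\;\le\;\int_0^{2\pi}\Bigl(\sum_{i}|2b_i(re^{\sqrt{-1}\theta})|^2\Bigr)^{1/2}d\theta \;\xrightarrow[r\to 0]{}\; 4\pi\Bigl(\sum_i b_i(0)^2\Bigr)^{1/2},
\end{equation*}
where $g^*\mu^*\omega_i=(a_i+\sqrt{-1}b_i)\,d\log z$. Combined with the \emph{equality} $L^2_{f^*\varrho(\gamma)}=16\pi^2\sum_i(a_i(0)^2+b_i(0)^2)$ forced by the two-sided logarithmic energy growth estimate and the identity $|\nabla v|^2=\sum_i|\mu^*\omega_i+\mu^*\bar\omega_i|^2$ (valid everywhere because both sides are $L^1_{\rm loc}$ and agree off a codimension-two set), one gets $a_i(0)=0$ directly. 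To repair your proof you should replace the unproven asymptotic-apartment claim by this length-of-generic-circles bound on the translation length; as written, the final step does not go through.
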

\begin{proof}
	\noindent {\bf Step 1.} We assume that $G$ is semi-simple.    Let $u$ be the corresponding section of $\widetilde{X}\times_{\varrho }\Delta(G)\to X$ of $\tilde{u}$ defined in \cref{sec:pullback}.   Let $\cR(u)\subset X$ be the regular locus of $u$ defined in \cref{def:sing}. Then $X\backslash\cR(u)$   is an open subset of $X$ of Hausdorff codimension at least two by \cref{gs}.     
	
For any regular point $x\in \cR(u)$ of $u$ (cf.~Definition~\ref{def:sing}),     one can choose a   simply-connected open neighborhood  $U$ of $x$   such that
\begin{enumerate}[label=(\arabic*)]
	\item \label{item:connect} the inverse image $\pi_X^{-1}(U)=\coprod_{\alpha\in I}U_\alpha$ is a union of disjoint open sets in $\widetilde{X}$,  each of which is mapped isomorphically onto 
	$U$ by $\pi_X:\widetilde{X}\to X$.
	\item For some $\alpha\in I$,  there is an apartment $A_\alpha$ of $\Delta(G)$ such that $u({U}_\beta)\subset A_\beta$.
\end{enumerate}
Let $\Phi=\{\beta_1,\ldots,\beta_m\}$ be the subset of $V^*$ defined in \eqref{eq:Phi}. For each apartment $A$ of $\Delta(G) $, $\{\beta_{A,1},\ldots,\beta_{A,m}\}$ are the affine functions on $A$ defined in \eqref{eq:beta}.  For each $j\in \{1,\ldots,m\}$, we define a real function  \begin{align}\label{eq:u}
	 u_{\alpha, j}= \beta_{A_\alpha,j}\circ \tilde{u}\circ  (\pi_X|_{U_\alpha})^{-1} : U\to  \bR.
\end{align}  By the pluriharmonicity of $\tilde{u}$,    we have
 $
\hess u_{\alpha, j} =0
$ 
for each $j$. Hence $\partial u_{\alpha, j}$   is a holomorphic 1-form on $U$.     By \cite[\S 4.2]{BDDM},  the set of holomorphic 1-forms $\{\partial u_{\alpha, 1},\ldots, \partial u_{\alpha, m}\}$  on $U$ will glue together  into  a splitting multivalued 1-forms $\eta$ over $\cR(u)$.  Moreover, for the characteristic polynomial $P_\eta(T):=T^m+\sigma_1 T^{m-1}+\cdots+ \sigma_m$ of $\eta$ defined in \cref{subsec:spectral},  each $\sigma_i$ extends to a logarithmic 1-form in $H^0(\overline{X},\Omega_{ \overline{X}}(\log \Sigma))$. Hence conditions in \cref{prop:spectral} are fulfilled. It implies that,  $\eta$ extends to a multivalued logarithmic 1-form over $(\overline{X},\Sigma)$, and there exists a spectral cover $\pi:\overline{\xsp}\to\overline{X}$ with respect to $\eta$.   The first two assertions of the theorem are proved. 
	
	We denote by $\bar{f}:(\overline{Y},\Sigma_Y)\to (\overline{X},\Sigma)$ be the morphism between log smooth pairs, that is the composition of $\mu$ and $\pi$.  Let $f:Y\to X$ be the restriction of $\bar{f}$ over $Y$.  Then by \cref{main:harmonic}, $\tilde{u}\circ\tilde{f}:\widetilde{Y}\to \Delta(G)$ is an $f^*\varrho$-equivariant pluriharmonic map with logarithmic energy growth. Here we denote by $\tilde{f}:\widetilde{Y}\to \widetilde{X}$ the lift of $f$ between the universal covers. Write $\tilde{v}:=\tilde{u}\circ \tilde{f}$ and let $v$ be the corresponding section.  
	
	We fix an irreducible component $\Sigma_o$ of $\Sigma_Y$.  Since $\cS(u):=X\backslash\cR(u)$ has Hausdorff codimension at least two, we can choose an embedded transverse disk $g:\bD\to \overline{Y}$, such that $g^{-1}(\Sigma_o)=g^{-1}(\Sigma_Y)=\{0\}$, and $g(\bD)$ intersects with  $\Sigma_o$ transversely. Furthermore, $(f\circ g)^{-1}(\cS(u))$ has Hausdorff dimension 0. 
	
	We fix the Euclidean metric $\frac{\sn}{2}dz\wedge d\bar{z}$ over $\bD^*$. By the above construction,  the multivalued 1-forms associated with the equivariant pluriharmonic maps we defined are functorial.  In other words,   $$f^*\eta=\{\mu^*\omega_1,\ldots,\mu^*\omega_m\}\subset H^0(\overline{Y},\Omega_{ \overline{Y}}(\log \Sigma_Y))$$ corresponds to the multivalued 1-form induced by $\tilde{v}$.   Thus, applying \eqref{lem:density} below,  after   rescaling of $\eta$  by multiplying it by $\frac{1}{\sqrt{|W^v|}}$,   we obtain the following over $f^{-1}(\cR(u))$:
	$$
	|\nabla v|^2=\sum_{i=1}^{m}|\mu^*\omega_i+\mu^*\bar{\omega}_i|^2,
	$$
	where  $ |\nabla v|^2$ is  the energy density function of $v$. Since  $ |\nabla v|^2\in L_{\rm loc}^1$, we conclude that the above equality holds over the whole $Y$. 
	
	Let $\nu_g$ be the section of $\bD^*\times_{(f\circ g)^*\varrho}\Delta(G)\to  \bD^*$ defined in \cref{sec:pullback}. On the other hand, since $(f\circ g)^{-1}(\cS(u))$ has Hausdorff dimension 0, by the same argument as above, we can conclude that
	$$
	|\nabla v_g|^2=\sum_{i=1}^{m}|g^*\mu^*\omega_i+g^*\mu^*\bar{\omega}_i|^2. 
	$$
	Write $g^*\mu^*\omega_i=(a_i(z)+\sn b_i(z))d\log z$, where $a_i(z)$ and $b_i(z)$ are real harmonic functions on $\bD$. Then  by the same computation as in \eqref{eq:local} and \eqref{eq:computation},  there exists a constant $C>0$ such that
	\begin{align}\label{eq:computation2}
		8\pi(\sum_{i=1}^{m}|a_i(0)|^2+|b_i(0)|^2)\log\frac{ 1}{r}\leq 	E^{v_g}[\bD_{r,1}] \leq  		8\pi(\sum_{i=1}^{m}|a_i(0)|^2+|b_i(0)|^2)\log\frac{ 1}{r}+C,\quad \forall\ \  r\in (0,1).
	\end{align}
	Let $\gamma\in \pi_1(Y)$ be the element representing the loop $\theta\mapsto  g(\frac{1}{2}e^{\sn\theta})$.  Since $v$ has logarithmic energy growth, by \cref{def:log energy}, we have 
	\begin{align}\label{eq:translation com}
		L_{f^*\varrho(\gamma)}^2=16\pi^2(\sum_{i=1}^{m}|a_i(0)|^2+|b_i(0)|^2). 
	\end{align}
	 	Since $(f\circ g)^{-1}(\cS(u))$ has Hausdorff dimension 0,  by \cite[Corollary 1]{Shi68} there exists a   subset $I\subset (0,1)$ of Lebesgue measure 1, such that  for each $r\in I$, the loop $\ell_r$ in $\bD^*$  defined by $\theta\mapsto re^{\sn\theta}$, does not intersect with $(f\circ g)^{-1}(\cS(u))$. Let $\gamma\in \pi_1(Y)$ be the element representing the loop $\theta\mapsto  g(re^{\sn\theta})$.  In this case, the translation length $L_{f^*\varrho(\gamma)}$ satisfies that, for any $r\in I$, we have
	\begin{align*}
		L_{f^*\varrho(\gamma)}&\leq \oint_{\ell_r} \sqrt{\sum_{i=1}^{m}|(g^*\mu^*\omega_i+g^*\mu^*\bar{\omega}_i)(\frac{\partial}{\partial\theta})|^2} d\theta \\
		&=\int_{0}^{2\pi} \sqrt{\sum_{i=1}^{m}|2b_i(re^{\sn\theta})|^2}d\theta.
	\end{align*}
	If letting $r\in I$ tends to $0$, we have
	$$
	L_{f^*\varrho(\gamma)}^2\leq 16\pi^2 \sum_{i=1}^{m}b^2_i(0).
	$$
	It follows from \eqref{eq:translation com} that $a_i(0)=0$ for each $i$. Thus, for each $i\in \{1,\ldots,m\}$, we have  ${\rm Res}_{\Sigma_o}\mu^*\omega_i=\sn b_i(0)$, which is pure imaginary.   Since $\Sigma_o$ is an arbitrary irreducible component of $\Sigma_Y$, it follows that $\mu^*\omega_1,\ldots,\mu^*\omega_m$ are pure imaginary logarithmic forms. The theorem is thus proved when $G$ is semisimple. 
	
	\medspace
	
	\noindent {\bf Step 2.} 
	We assume that $G$ is reductive. We shall use the notation introduced in the proof of \cref{main:harmonic} without recalling them explicitly.  Recall that $\Delta(G) = \Delta(\sD G) \times V$, where $V$ is isometric to $\bR^N$, and $G(K)$ acts on $V$ by translation. Note that $\tilde{u}$ is the product of a $\sigma$-equivariant pluriharmonic map $\tilde{u}_0: \widetilde{X} \to \Delta(\sD G)$ with logarithmic energy growth, and a $\tau$-equivariant pluriharmonic map $\tilde{v} \circ \tilde{a}: \widetilde{X} \to V$, also with logarithmic energy growth. Thus, the multivalued 1-form $\eta$ induced by $\tilde{u}$ is merely the union of the multivalued 1-form $\eta_0$ induced by $\tilde{u}_0$,   and the logarithmic 1-forms $\{\zeta_1, \ldots, \zeta_k\} \subset H^0(\overline{X}, \Omega_{\overline{X}}(\log \Sigma))$ induced by $\partial (\tilde{v} \circ \tilde{a})$. Hence,   the spectral cover $\pi: \overline{\xsp} \to \overline{X}$ with respect to $\eta$ coincides with the spectral cover with respect to $\eta_0$, whose existence is ensured by Step 1. The first two items are proved. 
	
	By Step 1, $f^*\eta_0$ is a finite set of pure imaginary logarithmic 1-forms.  Recall that in \cref{main:harmonic}, we prove that  $\{\zeta_1, \ldots, \zeta_k\} $  are pure imaginary. Thus, $f^*\zeta_i$ is also pure imaginary for each $i$. We conclude that $f^*\eta=f^*\eta_0\cup \{f^*\zeta_1,\ldots,f^*\zeta_k\}$ is a set of pure imaginary logarithmic 1-forms. This completes the proof of the theorem. 
\end{proof}
By the proof of \cref{item:extend},  if $G$ is semi-simple,  at each point $x_0$ of $\cR(u)$, there exists an open neighborhood $U$ of $x_0$ such that,  $\eta$ is given by holomorphic 1-forms  $\{\partial u_{\alpha,1},\ldots,\partial u_{\alpha,m}\}$, where $u_{\alpha,j}:U\to \bR$ is defined in \eqref{eq:u}.  Let $U_\alpha$ be a connected component of $\pi_X^{-1}(U)$ introduced in \cref{item:connect}.  Note that
$$
|\nabla u|^2=\sum_{i=1}^{N}|dx_i\circ i_{A_\alpha}\circ \tilde{u}\circ (\pi_X|_{U_\alpha})^{-1} |^2,
$$
where $\{x_1,\ldots,x_N\}$ is some orthogonal coordinates for $V$ defined in \cref{subsec:building}. For any $w\in W$, note that $\{w^*dx_1,\ldots,w^*dx_N\}$  is a   orthogonal basis for $TV^*$. Hence, by the definition of $\Phi$ defined in \eqref{eq:Phi}, we have
\begin{align} \nonumber 
	 \sum_{j=1}^{m}|\partial u_{\alpha,j}|&=\sum_{w\in W^v}\sum_{i=1}^{N}|w^*\partial x_i\circ i_{A_\alpha}\circ  \tilde{u}\circ (\pi_X|_{U_\alpha})^{-1} |^2\\\nonumber
	& =|W^v|\cdot \sum_{i=1}^{N}|\partial x_i\circ i_{A_\alpha}\circ \tilde{u}\circ (\pi_X|_{U_\alpha})^{-1} |^2\\ \label{eq:density}
	& =\frac{|W^v|}{2}|\nabla u|^2.
\end{align}  
The following result will be used in \cref{sec:singular}.
\begin{lem}\label{lem:5}
Let $X$, $G$, $\varrho$ and $\tilde{u}$ be as in \cref{main:harmonic}. Then there exists a   multivalued logarithmic 1-form $\eta$ on $(\overline{X},\Sigma)$, that is splitting over $\cR(u)$, such that for any point $x\in \cR(u)$, it has a simply connected open neighborhood $U$ satisfying:
\begin{thmlist}
	 \item \label{lem:density} over $U$,   $\eta$ is represented by  some  holomorphic 1-forms $\{\omega_1,\ldots,\omega_{N\ell}\}$ on $\Omega$, and  
\begin{align}\label{eq:energy3}
	  |\nabla u|^2=2\sum_{j=1}^{N\ell}|\omega_{j}|^2,
\end{align} 
	 where $N$ is the $K$-rank of $ G$, and $\ell$ is the cardinality of the vectorial Weyl group $W^v$ of $\sD G$. 
	 \item \label{item:ortho} There exists a  partition  of  $\sqcup_{i=1}^{\ell}\{\omega_{i,1},\ldots,\omega_{i,N}\}= \{\omega_1,\ldots,\omega_{\ell N}\}$  satisfying
	 \begin{itemize}
	 	\item  for each $i=2,\ldots,\ell$, there exists a constant matrix $M_i\in {\rm O}(N,\bR)$ such that
\begin{align}\label{eq:orth}
	 	 \begin{bmatrix}
	 	\omega_{i,1}, & \cdots, &\omega_{i,N}
	 \end{bmatrix} =	 	 \begin{bmatrix}
	 	\omega_{1,1}, & \cdots, &\omega_{1,N}
	 \end{bmatrix}  \cdot M_i.
\end{align} 
	 	\item If  there exists some apartment $A$ of $\Delta(G)$,  such that  $\tilde{u}(U_\alpha)\subset A$, where $U_\alpha$ is some connected component of $\pi_X^{-1}(U)$, then  for   any isometry  $i:A\to \bR^N$, denoting $(u_1,\ldots,u_N)=i\circ \tilde{u}\circ (\pi_X|_{U_\alpha})^{-1}:U\to \bR^N$,  we have 
\begin{align}\label{eq:orth2}
	 	\begin{bmatrix}
	 	\partial u_1, & \cdots,  & \partial u_N
	 \end{bmatrix} =	 \begin{bmatrix}
	 	\omega_{1,1}, & \cdots, &\omega_{1,N}
	 \end{bmatrix} \cdot M \cdot  \frac{1}{\sqrt{\ell}}
\end{align} 
	 	for some constant matrix $M\in {\rm O}(N,\bR)$.
	 \end{itemize} 
	 \item \label{item:p}For each $p\in \{1,\ldots,n\}$, $\eta$ induces a multivalued section $\eta^p$ on $\Omega^p_{ \overline{X}}(\log \Sigma)$.
\end{thmlist} 
\end{lem}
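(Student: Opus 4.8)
The plan is to reuse the local construction of the multivalued $1$-form attached to $\tilde u$ and then to extract the three properties from the linear algebra of the vectorial Weyl group $W^v$ of $\sD G$ acting on an apartment of $\Delta(G)$. At a regular point $x\in\cR(u)$ one fixes a simply connected neighbourhood $U\subset X$, a sheet $U_\alpha$ of $\pi_X^{-1}(U)$ with $\tilde u(U_\alpha)$ contained in an apartment $A_\alpha$, a chart $i_{A_\alpha}\colon A_\alpha\to\bR^N$, and orthonormal coordinates $x_1,\dots,x_N$; since regular points have $\tilde u$ mapping into a single apartment, $\tilde u$ is smooth on $U_\alpha$, and by pluriharmonicity the functions $u^{(w,i)}:=(w^*x_i)\circ i_{A_\alpha}\circ\tilde u\circ(\pi_X|_{U_\alpha})^{-1}$, for $(w,i)\in W^v\times\{1,\dots,N\}$, satisfy $\partial\bar\partial u^{(w,i)}=0$, so each $\partial u^{(w,i)}$ is holomorphic on $U$. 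By \cite[\S 4.2]{BDDM} the family $\{\partial u^{(w,i)}\}$, indexed by the pairs $(w,i)$ --- equivalently by $\Phi$ of \eqref{eq:Phi} counted with multiplicity --- glues to a splitting multivalued $1$-form over $\cR(u)$ whose characteristic polynomial has coefficients that extend to logarithmic sections of $\Sym^\bullet\Omega_{\overline X}(\log\Sigma)$; hence \cref{prop:spectral} extends it to a multivalued logarithmic $1$-form $\eta$ on $(\overline X,\Sigma)$. I then set $\omega_j:=\tfrac{1}{\sqrt{\ell}}\,\partial u^{(w,i)}$ with $\ell:=|W^v|$, which is exactly the normalisation that turns the pointwise identity \eqref{eq:density} into \eqref{eq:energy3}; this proves \cref{lem:density}.

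For \cref{item:ortho} I would organise the $\ell N$ forms into blocks indexed by $w\in W^v$, the block $w=\mathrm{id}$ being $\{\omega_{1,1},\dots,\omega_{1,N}\}$. Since $W^v$ acts on $V$ isometrically, $w^*x_k=\sum_l (M_w)_{lk}\,x_l$ for an orthogonal matrix $M_w$, and applying $\tfrac{1}{\sqrt{\ell}}\partial$ to $u^{(w,k)}$ gives $\omega_{w,k}=\sum_l (M_w)_{lk}\,\omega_{1,l}$, which is \eqref{eq:orth}. For the second bullet, suppose some apartment $A$ contains $\tilde u(U_\alpha)$; using \cref{item:weyl} on $A\cap A_\alpha$ to compare the chart of $A$ with $i_{A_\alpha}$, any isometry $A\to\bR^N$ differs from $i_{A_\alpha}$ over $A\cap A_\alpha$ by a Euclidean motion $x\mapsto Qx+b$ of $\bR^N$ with $Q$ orthogonal. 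Pulling back $x_1,\dots,x_N$ and applying $\partial$ kills the translation $b$, so $[\partial u_1,\dots,\partial u_N]$ is an orthogonal change of basis of $[\omega_{1,1},\dots,\omega_{1,N}]$ up to the normalising scalar of \eqref{eq:energy3}; this yields \eqref{eq:orth2}.

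For \cref{item:p} I would apply the exterior power (compound) construction. Over $\cR(u)$, the $p$-fold wedges $\omega_{i_1}\wedge\dots\wedge\omega_{i_p}$ with $i_1<\dots<i_p$ of the local sheets of $\eta$ are holomorphic logarithmic $p$-forms, and since the chart changes permute those sheets by \eqref{eq:compatible}, the finite set $\{\pm\,\omega_{i_1}\wedge\dots\wedge\omega_{i_p}\}$ is invariant as a set; it therefore glues to a splitting multivalued section of $\Omega^p_{\overline X}(\log\Sigma)$ over $\cR(u)$. Its characteristic polynomial $\prod_{i_1<\dots<i_p}\bigl(T^2-(\omega_{i_1}\wedge\dots\wedge\omega_{i_p})^2\bigr)$ has coefficients which are symmetric functions of the sheets of $\eta$, hence universal polynomial expressions in the coefficients of the characteristic polynomial of $\eta$; since the latter extend to logarithmic sections of $\Sym^\bullet\Omega_{\overline X}(\log\Sigma)$ by \cref{thm:spectral}, the former extend to logarithmic sections of $\Sym^\bullet\Omega^p_{\overline X}(\log\Sigma)$, and \cref{prop:spectral} then produces the multivalued section $\eta^p$ on $(\overline X,\Sigma)$.

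The obstacles are of bookkeeping type rather than conceptual. The first is keeping track of the normalising powers of $\sqrt{\ell}$ so that \eqref{eq:energy3} and the orthogonality relations \eqref{eq:orth}--\eqref{eq:orth2} hold simultaneously, and, for a merely reductive $G$, remembering to let $W^v$ act trivially on the central Euclidean factor $V$ of $\Delta(G)=\Delta(\sD G)\times V$ so that the count $\ell N$ and the identity \eqref{eq:density} remain uniform. The second, in \cref{item:p}, is that $W^v$ permutes the $p$-fold wedges only up to sign, so one must symmetrise as above (or, equivalently, pass to an auxiliary double cover) before invoking \cref{prop:spectral}; apart from that, the argument is a direct transcription of the material already set up in \cref{subsec:spectral} and \cref{subsec:building}.
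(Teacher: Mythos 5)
Your treatment of \cref{lem:density} and \cref{item:ortho} is essentially the paper's proof: at a regular point you take the family $\{(w^*x_i)\circ i_{A_\alpha}\circ\tilde u\circ(\pi_X|_{U_\alpha})^{-1}\}$ indexed by $W^v$ (acting trivially on the Euclidean factor $V$ of $\Delta(G)=\Delta(\sD G)\times V$, the remaining $k=N-N'$ slots of each block being filled by the quasi-Albanese forms $\xi_j$), normalise by $1/\sqrt\ell$, glue via \cite[\S 4.2]{BDDM}, and extend by \cref{prop:spectral}; the relations \eqref{eq:orth} and \eqref{eq:orth2} then come from orthogonality of $W^v$ and of chart changes, exactly as in the paper, which simply says these follow from the construction. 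Your caveat about tracking powers of $\sqrt\ell$ is warranted: with $\omega_{1,j}=\tfrac{1}{\sqrt\ell}\,\partial\big(x_j\circ i_{A_\alpha}\circ\tilde u\circ(\pi_X|_{U_\alpha})^{-1}\big)$ the scalar relating $[\partial u_j]$ to $[\omega_{1,j}]$ is $\sqrt\ell$, and one must check this against the way \eqref{eq:orth2} is invoked in \cref{sec:singular}.

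\cref{item:p} is where you genuinely depart from the paper, and where there is a gap. First, you wedge all $p$-tuples of the $\ell N$ sheets, whereas the paper defines $\eta^p$ by wedging only \emph{within} each block, $\sqcup_{j=1}^{\ell}\{\pm\,\omega_{j,i_1}\wedge\cdots\wedge\omega_{j,i_p}\}_{1\leq i_1<\cdots<i_p\leq N}$; the block-restricted object is the one used in \cref{claim:ord2} (via \eqref{eq:sum}), so your $\eta^p$, though a legitimate multivalued section, is not the one the lemma is setting up. Second, your extension step rests on the claim that the coefficients of $P_{\eta^p}$ are universal polynomials in those of $P_\eta$. This is not automatic: the former live in $\Sym^d\Omega^p_{\overline X}(\log\Sigma)$, and what you actually need is that every $\mathfrak{S}_{\ell N}$-invariant polynomial map $W^{\oplus \ell N}\to \Sym^d\Lambda^pW$ factors through the elementary symmetric functions of the $\omega_j$ --- this is Weyl's first fundamental theorem for vector invariants of the symmetric group (true in characteristic $0$ via polarized power sums and Newton's identities), a genuine theorem you neither state nor cite, and one that moreover does not apply to the block-restricted $\eta^p$, whose characteristic polynomial is invariant only under block-preserving permutations. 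The paper avoids all of this: it fixes a hermitian metric on $\Omega_{\overline X}(\log\Sigma)$, uses compactness of the support $|Z_\eta|$ to bound the local sheets uniformly, hence bounds the coefficients of $P_{\eta^p}$ over $\cR(u)$, and extends them across $\cS(u)$ by the Hartogs-type theorem of \cite{Shi68} before applying \cref{prop:spectral}. You should either substitute this boundedness argument or supply the invariant-theoretic justification and restrict to a fully symmetric version of $\eta^p$.
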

\begin{proof}
We shall use the notations in Step 2 of the proof of \cref{thm:spectral}.  For   each point $x_0$ of $\cR(u_0)$, there exists a simply connected neighborhood $U$ of $x_0$ such that, for some connected component $U_\alpha$ of $\pi_X^{-1}(U)$,    $\tilde{u}_0(U_\alpha)$ is contained in some apartment $A$ of $\Delta(\sD G)$.  Let $(W^v,V)$ be data of $\sD G$  defined in \cref{subsec:building}.  
Let $N'$ be the dimension of $\Delta(\sD G)$ and $\ell$ be the cardinality of $ W^v$.  Fix  orthonormal coordinates $(x_1,\ldots,x_{N'})$ for $V$.

We use the notations in \cref{subsec:building}.  Define a set of holomorphic 1-forms on $U$  with a partition as follows:
\begin{align} \label{eq:partition}
\sqcup_{w\in W^v}\{ \frac{1}{\sqrt{\ell}}w^*\partial x_1\circ i_{A}\circ\tilde{u}_0\circ  (\pi_X|_{U_\alpha})^{-1},\ldots, \frac{1}{\sqrt{\ell}}w^*\partial x_{N'}\circ i_{A}\circ\tilde{u}_0\circ  (\pi_X|_{U_\alpha})^{-1}, \frac{1}{\sqrt{\ell}}\xi_{1},\ldots, \frac{1}{\sqrt{\ell}}\xi_{k}\},
\end{align}
 where $\xi_1,\ldots,\xi_k$ are logarithmic 1-forms on $(\overline{X},\Sigma)$ induced by the pluriharmonic map $\tilde{v}\circ \tilde{a}$. Thus we have $N=N'+k$, as $N$ is also the dimension of $\Delta(G)$.   By Step 1 of the proof of \cref{item:extend}, \eqref{eq:partition} gives rise to a   multivalued logarithmic 1-form on $(\overline{X},\Sigma)$, denoted by $\eta$.
By \eqref{eq:density}, we have
$$
|\nabla u_0|^2=2\sum_{i=1}^{N'}\frac{1}{ {|W^v|}} \left|  \partial x_{1}\circ i_{A}\circ\tilde{u}_0\circ  (\pi_X|_{U_\alpha})\right|^2+\cdots+\left|  \partial x_{N'}\circ i_{A}\circ\tilde{u}_0\circ  (\pi_X|_{U_\alpha})\right|^2
$$  
 Note that $\partial \tilde{v}\circ \tilde{a}=(\pi_X^*\xi_1,\ldots,\pi_X^*\xi_k)$. Since $|\nabla u|^2=|\nabla u_0|^2+|\nabla \tilde{v}\circ \tilde{a}|^2$,  it yields \eqref{eq:energy3}. 
 
   Note that $w$ is an isometry of $V$. 
\cref{item:ortho} follows directly from the construction of $\eta$  in \eqref{eq:partition}. 

Let us prove \cref{item:p}. For each $I=\{i_1,\ldots,i_p\}$ with $1\leq i_1<\cdots<i_p\leq n$, we define  a set of holomorphic $p$-forms with a partition given by 
$$
\sqcup_{j=1}^{\ell}\{\pm \omega_{j,i_1}\wedge\cdots\wedge \omega_{j,i_p} \}_{1\leq i_1<\cdots<i_p\leq N}.
$$
By \eqref{eq:partition}, this is a well-defined  splitting multivalued $p$-form on $\cR(u)$, denoted by $\eta^p$.

	We fix a smooth hermitian metric $h$ for  the vector bundle $\Omega_{ \overline{X}}(\log \Sigma)$. It induces a hermitian metric $h_p$ on $\Omega^p_{ \overline{X}}(\log \Sigma)$.  Since the support $|Z_\eta|$ is compact,  there exists a uniform  constant $C>0$ such that   
\begin{align*}
	\left| \omega_{j,i_1}\wedge\cdots\wedge \omega_{j,i_p}(x)\right|_{h^p}\leq C,\quad \forall \  x\in U\cap \cR(u) 
\end{align*} 
for each $I$. 
Let $P_{\eta^p}(T)=T^M+\sigma_1 T^{M-1}+\cdots+\sigma_M$ be the characteristic polynomial of $\eta^p$ defined in \cref{subsec:spectral}, with $\sigma_i\in H^0(\cR(u),{\rm Sym}^i\Omega^p_{ \overline{X}}(\log \Sigma)|_{\cR(u)})$.   Then the norm of $\sigma_i$ with respect to the metric   $h^p$ is uniformly bounded. By the Hartogs theorem in \cite{Shi68}, each $\sigma_i$ extends to a section of ${\rm Sym}^i\Omega^p_{ \overline{X}}(\log \Sigma)$ on $\overline{X}$. The conditions in \cref{prop:spectral} are fulfilled. We conclude that $\phi^p$ extends to a multivalued section of $\Omega^p_{ \overline{X}}(\log \Sigma)$ on $\overline{X}$.The last assertion is proved. 
\end{proof}

\begin{rem} 
When $X$ is a compact K\"ahler manifold, spectral covers associated with equivariant harmonic maps to Euclidean buildings were   systematically studied  by Eyssidieux in \cite{Eys04}. The construction of spectral covers presented here follows the approach of Klingler \cite{Kli13}, while the  definition of multivalued 1-forms builds on the ideas of \cite{Eys04}, which differ slightly from those in \cite[\S 4]{BDDM}.
\end{rem}

\section{Unicity of pluriharmonic maps} \label{sec:unicity}

\subsection{Uniqueness of  energy density function}\label{sec:unique}
Throughout this subsection,  $G$ is a reductive algebraic group defined over a non-archimedean local field $K$. We begin with the following definition. 
\begin{dfn}[Directional energy]
	Let $u:\Omega \rightarrow \Delta(G)$ be a locally finite energy map from a Riemannian domain $\Omega$.
	For $V \in \Gamma(\Omega, T_\Omega)$, the \emph{directional energy} defined in \cite[Theorem 1.9.6]{KS} is denoted by  $|u_*(V) |^2$.  By \cite[Lemma 1.9.3 and Theorem 2.3.2]{KS}, 
	\[
	|u_*(V)|^2(p)=\lim_{t \rightarrow 0} \frac{d^2(u(p),u(\exp_p(tV)))}{t^2} \ \ \mbox{for a.e.~$p \in \Omega$}.
	\]
\end{dfn}

\begin{rem}
	Let $M$ be a Riemannian manifold,  $\varrho:\pi_1(M) \rightarrow G(K)$ be a representation and $u:\widetilde M \rightarrow \Delta(G)$ be a $\varrho$-equivariant  map.  Given a vector field $V$ defined on $M$,  lift it to $\tilde M$ and denote it again by $V$.  Then the energy density function $|u_*(V)|^2$ is a $\pi_1(M)$-invariant function on $\tilde M$ and thus descends to a well-defined $L^1_{\rm loc}$-function on $M$.
\end{rem}
\begin{proposition} \label{prop:uv}
	Let $X$ be a smooth quasi-projective variety  of dimension $n$ and  $\varrho:\pi_1(X) \rightarrow G(K)$ be a  representation.  If $\tilde{u},\tilde{v}: \widetilde{X} \rightarrow \Delta(G)$ are two $\varrho$-equivariant pluriharmonic maps of logarithmic energy growth, then we have
	\begin{thmlist}
		\item  $d(\tilde{u},\tilde{v})=c$ for some constant $c\geq 0$;
		\item  \label{item:density}$|\tilde{u}_*(V)|^2 = |\tilde{v}_*(V)|^2$ for any holomorphic vector field $V \in \Gamma(\Omega, T_\Omega)$, where $\Omega\subset \widetilde{X}$ is an open set.  
	\end{thmlist} 
\end{proposition}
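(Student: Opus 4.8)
The plan is to prove (i) by showing that $\phi(x):=d(\tilde u(x),\tilde v(x))$ descends to a plurisubharmonic function on $X$ which is bounded above near the boundary divisor, hence extends to the compactification and is constant by compactness; and then to deduce (ii) from the constancy of $\phi$ by a local computation on the regular set.

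First I would check that $\phi$ is $\pi_1(X)$-invariant: since $\varrho(\gamma)$ acts isometrically on $\Delta(G)$ and both maps are $\varrho$-equivariant, $d(\tilde u(\gamma x),\tilde v(\gamma x))=d(\varrho(\gamma)\tilde u(x),\varrho(\gamma)\tilde v(x))=\phi(x)$, so $\phi$ descends to a continuous function on $X$. To see it is plurisubharmonic, fix a holomorphic disk $\psi:\bD\to X$ and lift it to $\tilde\psi:\bD\to\widetilde X$; by pluriharmonicity the pair $(\tilde u\circ\tilde\psi,\tilde v\circ\tilde\psi):\bD\to\Delta(G)\times\Delta(G)$ is a harmonic map into the NPC space $\Delta(G)\times\Delta(G)$ (energy is additive over the two factors), and composing it with the convex $1$-Lipschitz function $D(a,b)=d(a,b)$ gives, by \cite{KS}, a subharmonic function on $\bD$, namely $\phi\circ\psi$. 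Since $\psi$ is arbitrary, $\phi$ is psh on $X$, and so is $\phi^2$.

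The crux is then to show $\phi$ is locally bounded above near $\Sigma:=\overline X\setminus X$; granting this, $\phi$ extends across the pluripolar set $\Sigma$ to a psh function on $\overline X$, which must be a constant $c\ge 0$ since $\overline X$ is compact, proving (i). To bound $\phi$ near $\Sigma$ it suffices to bound it along generic transverse holomorphic disks $f:\bD\to\overline X$ with $f^{-1}(\Sigma)=\{0\}$, chosen to meet $\Sigma$ outside the crossings and outside the closures of the Zariski closed, codimension $\ge 2$ bad loci of \cref{item:singular} for both $\tilde u$ and $\tilde v$ (possible since those loci have Hausdorff codimension $\ge 2$ by \cref{gs}); then $u_f:=\tilde u\circ\tilde f$ and $v_f:=\tilde v\circ\tilde f$ are $f^*\varrho$-equivariant pluriharmonic maps of $\bD^*$ with logarithmic energy growth that are \emph{regular} near the puncture. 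Pulling back via $\exp:\bH\to\bD^*$, each lands near $\mathrm{Re}=-\infty$ in a single apartment $A\cong\bR^N$, and — exactly as in \cref{lem:special} and \cref{lem:char} — the logarithmic energy bound forces $\partial u_f=\Psi(w)\,dw$ with $\Psi$ a bounded holomorphic $\bC^N$-valued function whose limiting value has vanishing real part, so that up to a bounded term $u_f(w)$ is linear in $\mathrm{Im}(w)$ at rate $L_\gamma/2\pi$ along the translation axis (min-set) of $\varrho(\gamma)$; the same holds for $v_f$ with the same data. Hence both $u_f$ and $v_f$ stay within bounded distance of the min-set of $\varrho(\gamma)$, which is the same for both, and therefore $\phi_f=d(u_f,v_f)$ is bounded near $0$. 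This is the step I expect to be the main obstacle — pinning down the boundary asymptotics of the two maps closely enough to compare them — and it rests on the punctured-disk analysis of \cite{BDDM,DMrs,DMks}.

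For (ii), set $c:=d(\tilde u,\tilde v)$; if $c=0$ then $\tilde u=\tilde v$ and there is nothing to prove, so assume $c>0$. It is enough to establish $|\tilde u_*(V)|^2=|\tilde v_*(V)|^2$ on $\cR(\tilde u)\cap\cR(\tilde v)$, which has full Lebesgue measure by \cref{gs}. Near a point of this set, after shrinking, one finds a common apartment $A\cong\bR^N$ of $\Delta(G)$ containing the images of the lifts of both maps (the geodesic joining $\tilde u$ to $\tilde v$ is combinatorially constant by continuity); in $A$ the maps are smooth pluriharmonic $\bR^N$-valued functions with $\hess\tilde u=\hess\tilde v=0$, so $\mathbf w:=\tilde u-\tilde v$ is pluriharmonic with $|\mathbf w|\equiv c$. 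Then $0=\hess(|\mathbf w|^2)=2\sum_i\partial w_i\wedge\db w_i$, whose associated positive semidefinite Hermitian matrix must vanish, forcing $\partial w_i/\partial z_j=0$ for all $i,j$; hence $\mathbf w$ is constant and $\partial\tilde u=\partial\tilde v$ on the regular set. Since on the regular set the directional energies are computed from the holomorphic form $\partial\tilde u$ alone (as in the proof of \cref{lem:5}), we conclude $|\tilde u_*(V)|^2=|\tilde v_*(V)|^2$ there, hence a.e. on $\widetilde X$. (One may replace the common-apartment step by the flat-strip phenomenon for equidistant harmonic maps into NPC targets.)
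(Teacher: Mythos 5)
Your overall strategy is genuinely different from the paper's. The paper proves the proposition by induction on $\dim X$: the base case $\dim_\bC X=1$ is quoted from \cite[Lemma 5.8]{DMrs}, and the inductive step uses the Bertini/Lefschetz statement (\cref{prop:Bertini}) together with \cref{item:pullback} to slice by hyperplane sections through a prescribed point and tangent to a prescribed vector, so that both the constancy of $d(\tilde u,\tilde v)$ and the equality of directional energies propagate from curves up to $X$. You instead try to prove (i) directly in top dimension by showing $\phi=d(\tilde u,\tilde v)$ is psh on $X$, bounded near $\Sigma$, hence extends to a psh function on the compact $\overline X$ and is constant. The psh part of your argument is fine, but the boundary step contains a genuine gap, and it is exactly the hard analytic content that the paper's induction delegates to the one-variable result of \cite{DMrs}.

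Concretely, two inferences in your boundary analysis fail as stated. First, regularity of $u_f$ on a punctured neighborhood of $0$ does not imply that $u_f$ maps that punctured neighborhood into a single apartment: the punctured disk is not simply connected, and the local apartments can be permuted by the monodromy around the puncture (this is precisely why the paper has to introduce multivalued forms and spectral covers). Second, even granting an asymptotic description of $u_f$ and $v_f$ relative to $\mathrm{Min}(\varrho(\gamma))$, the implication ``both maps stay within bounded distance of the same min-set, therefore $d(u_f,v_f)$ is bounded'' is invalid: for a semisimple isometry of a building the min-set splits as $Y\times\bR$ with $Y$ possibly unbounded, so two points each close to the min-set can be arbitrarily far apart. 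Pinning down that the two maps track \emph{the same} axis up to bounded error is the substance of \cite[Lemma 5.8]{DMrs}, and your sketch does not supply it. (There is also a circularity risk in invoking \cref{item:singular}, which is proved later in the paper; the Hausdorff-codimension bound of \cref{gs} alone does not guarantee that a transverse disk avoids the singular set on a full punctured neighborhood of $0$.)

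Your derivation of (ii) from (i) also has a gap: at a point of $\cR(\tilde u)\cap\cR(\tilde v)$ you get apartments $A_u\supset\tilde u(\Omega)$ and $A_v\supset\tilde v(\Omega)$, but there is in general no single apartment containing both images; the building axioms only guarantee a common apartment for two \emph{chambers}. This is why the paper proves \cref{omegastar} (the set where the image lies in the closure of a single chamber has full measure, which uses pluriharmonicity to control the preimage of the codimension-one skeleton) before running the translation argument in \cref{pretheorem1.1} --- and note that in the paper this argument is downstream of \cref{prop:uv}, used to prove \cref{main:unicity}, whereas \cref{prop:uv}(ii) itself is again proved by slicing and induction. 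If you repair the boundary estimate in (i), you would still need the content of \cref{omegastar} to make your (ii) work.
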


\begin{proof}
	If $\dim_\bC X=1$, then the proposition  follows from \cite[Lemma 5.8]{DMrs}.
	Assume by induction  that the assertions are both true  if  $\dim X=n-1$.   We take a smooth projective compactification $\overline{X}$ for $X$ such that $\Sigma:=\overline{X}\backslash X$ is a simple normal crossing divisor.  
	
	We fix an projective embedding $\iota:\overline{X}\hookrightarrow \bP^N$ and denote by $L:=\iota^*\sO_{\bP^N}(3)$.  Let $\bU(q)\subset H^0(\overline{X},L)$ be defined in \cref{prop:Bertini}.   
	For any element $s \in H^0(\overline{X},L)$, let  $\iota_{Y_s}:Y_s \rightarrow X$ be  the inclusion map defined in \cref{prop:Bertini}. 
	
	Choose any $q\in X$, and any $\tilde{q}\in \widetilde{X}$ such that $\pi_X(\tilde{q})=q$.   By \cref{item:Lefschetz}, for  any section $s\in \bU(q)$,  letting   $\widetilde{\iota_{Y_{\! s}}}:\widetilde{Y_{\! s}}\to \widetilde{X}$ be the lift of $\iota_{Y_{\! s}}$ between universal covers, we have 
	$
	\pi_X^{-1}(q)\subset  \widetilde{\iota_{Y_{\! s}}}(\widetilde{Y}_s).$ Hence there exists $\widetilde{q_s}\in \widetilde{Y_s}$ such that $\widetilde{\iota_{Y_{\! s}}}(\widetilde{q_s})=\tilde{q}$.    By \cref{main:harmonic},  the  $\varrho_{Y_{\! s}}$-equivariant maps $\widetilde{u_{Y_{\! s}}}$ and $\widetilde{v_{Y_{\! s}}}$  defined in \cref{sec:pullback} are pluriharmonic maps  of logarithmic energy. The inductive hypothesis implies that there exists a constant $c_{Y_{\! s}} \geq 0$ such that $d\left(\widetilde{u_{Y_{\! s}}}(y), \widetilde{v_{Y_{\! s}}}(y)\right) = c_{Y_{\! s}}$ for each $y\in \widetilde{Y_{\! s}}$.   Since $\widetilde{u_{Y_{\! s}}} =\tilde{u}\circ \widetilde{\iota_{Y_{\! s}}}$  and  $\widetilde{v_{Y_{\! s}}} =\tilde{v}\circ \widetilde{\iota_{Y_{\! s}}}$, it follows that for any other $s^{\prime} \in \mathbb{U}(q)$, we have
	$$
	c_{Y_{\! s}}=d\left(\widetilde{u_{Y_{\! s}}}(\tilde{q}_s), \widetilde{v_{Y_{\! s}}}(\tilde{q}_s)\right)= d\left(\tilde{u}(\tilde{q}),\tilde{v}(\tilde{q})\right)=d\left(\widetilde{u_{Y_{s'}}}(\tilde{q}_{s'}), \widetilde{v_{Y_{s'}}}(\tilde{q}_{s'})\right) = c_{Y_{s^{\prime}}}.
	$$ 
	Hence $c_{Y_{\! s}}$ does not depend on the choice of $s\in \bU(q)$, which we shall denote by $c$.
	
	Let $p$ be any other point in $X$. Then by \cref{item:tangent}, there exists $s\in \bU(q)$ such that $p\in Y_s$.  By \cref{item:Lefschetz}, for any   $\tilde{p}\in \pi_X^{-1}(p)$,     there exists $\widetilde{p_s}\in \widetilde{Y_s}$ such that $\widetilde{\iota_{Y_{\! s}}}(\widetilde{p_s})=\tilde{p}$. It follows that
	$$
	d\left(\tilde{u}(\tilde{p}),\tilde{v}(\tilde{p})\right)=d\left(\widetilde{u_{Y_{\! s}}}(\tilde{p}_s), \widetilde{v_{Y_{\! s}}}(\tilde{p}_s)\right)= c.
	$$
	Thus, we conclude that $d(\tilde{u}(x), \tilde{v}(x)) \equiv c$  for  each $x\in \widetilde{X}$. 
	
	\medspace

	Let us prove the second assertion. For any local smooth vector field $V$ on $\widetilde{X}$, we know that $|\tilde{u}_*(V)|^2,|\tilde{v}_*(V)|^2\in L^1_{\rm loc}$, and thus it suffices to prove \cref{item:density} over the dense open subset $\cR(\tilde{u})\cap \cR(\tilde{v})$.  Since $\tilde{u}$ and $\tilde{v}$ are both smooth over  $\cR(\tilde{u})\cap \cR(\tilde{v})$, it suffices to prove that for any  point $\tilde{q}\in \cR(\tilde{u})\cap \cR(\tilde{v})$, and any $V\in T_{\tilde{q}}\widetilde{X}$, we have
	$$
	|\tilde{u}_*(V)|^2=|\tilde{v}_*(V)|^2.
	$$ Set $q=\pi_X(\tilde{q})$.  By \cref{item:tangent}, there exists $s\in \bU(q)$ such that $(\pi_X)_*V\in T_{q}Y_s$.   Hence $V\in T_{\tilde{q}}(\widetilde{Y_s})$. 
	
	By the inductive hypothesis, we have
	$$|\tilde{u}_*(V)|^2=|(\widetilde{u_{Y_s}})_*(V)|^2=|(\widetilde{v_{Y_s}})_*(V)|^2=|\tilde{v}_*(V)|^2.$$  This yields the second assertion. The proposition is proved. 
\end{proof}

\subsection{Proof of  unicity theorem }\label{subsec:unicity}
Recall the following definition from \cite{GS92}.
\begin{dfn}[\cite{GS92}, Section 6]
	We say that a nonpositively curved $N$-dimensional complex $\cF$ is {\it $F$-connected} if any two adjacent simplices are contained in an apartment, i.e.~a totally geodesic subcomplex $A$ which is isometric to a subset of the Euclidean space $\mathbb R^N$.
\end{dfn}

The regular set and the singular set of a harmonic map into a $F$-connected complex is defined analogously as in \cref{def:sing}.

A neighborhood of a point $P_0 \in \Delta(G)$ is isometric to a neighborhood of the origin in the tangent cone $T_{P_0}\Delta(G)$.  Two simplices (which are actually simplicial cones) in $T_{P_0}\Delta(G)$ are contained in a totally geodesic subcomplex $T_{P_0}A$ where $A$ is an apartment of $\Delta(G)$. In other words, $T_{P_0}\Delta(G)$ is  an  $N$-dimensional $F$-connected  complex.
Thus, when we study the local behavior of harmonic maps $u:\Omega \to \Delta(G)$ at a point $x_0 \in \Omega$, we can assume that $u$ maps into the $N$-dimensional, $F$-connected complex   $T_{P_0}\Delta(G)$ where $P_0=u(x_0)$.


\begin{lem}[\cite{GS92}, proof of Proposition 2.2]  \label{order}
	Let $u:\Omega \rightarrow \cF$ be a harmonic map from an $n$-dimensional Riemannian domain to a $F$-connected complex and $x_0 \in \Omega$.  Then  there exists a constant $c>0$ and $\sigma_0>0$ such that 
	\[
	\sigma \rightarrow \frac{e^{c\sigma^2} \sigma \displaystyle{ \int_{B_\sigma(x_0)} |\nabla u|^2 d\mu} }
	{\displaystyle{ \min_{Q \in \Delta(G)} \int_{\partial B_\sigma(x_0)} d^2(u,Q) d\Sigma}}
	\]
	is a  non-decreasing functions in the interval $(0,\sigma_0)$.\qed
\end{lem}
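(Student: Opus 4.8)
The plan is to prove the stated monotonicity by an Almgren-type frequency argument adapted to NPC targets, following the scheme in \cite{GS92} together with the variational calculus of Korevaar--Schoen \cite{KS}; note that although $\cF$ is assumed $F$-connected, all we use is that it is NPC (complete CAT(0)). Working in geodesic normal coordinates centred at $x_0$, so that the metric differs from the Euclidean one by $O(|x|^2)$ and the sphere $\partial B_\sigma(x_0)$ has mean curvature $\frac{n-1}{\sigma}+O(\sigma)$, set
\[
E(\sigma)=\int_{B_\sigma(x_0)}|\nabla u|^2\,d\mu,\qquad I_Q(\sigma)=\int_{\partial B_\sigma(x_0)}d^2(u,Q)\,d\Sigma,\qquad I(\sigma)=\min_{Q}I_Q(\sigma),
\]
and let $\Phi(\sigma)=e^{c\sigma^2}\sigma E(\sigma)/I(\sigma)$ be the function appearing in the statement (the degenerate case in which $u$ is constant near $x_0$ being excluded, so $E,I>0$ on $(0,\sigma_0)$ for $\sigma_0$ small). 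Since $Q\mapsto d^2(\cdot,Q)$ is strictly convex and coercive on $\cF$, for each $\sigma$ the minimum defining $I(\sigma)$ is attained at a unique $Q(\sigma)$, and $I$ is locally Lipschitz in $\sigma$; by \cite{KS}, $\sigma\mapsto E(\sigma)$ and $\sigma\mapsto I_Q(\sigma)$ are absolutely continuous. Hence it suffices to show $\Phi'(\sigma)\geq 0$ for a.e.\ $\sigma\in(0,\sigma_0)$, and at a point of differentiability a standard envelope (Danskin) argument gives $I'(\sigma)=\partial_\sigma I_Q(\sigma)\big|_{Q=Q(\sigma)}$.

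First I would record the two variational (in)equalities. The inner-variation (Pohozaev) identity for harmonic maps to NPC spaces (\cite[Theorem 2.3.2]{KS}), combined with the $O(|x|^2)$ deviation of the metric from the Euclidean one, produces a constant $c_1>0$ with
\[
\frac{d}{d\sigma}\log\!\big(\sigma^{2-n}E(\sigma)\big)\ \geq\ \frac{2\,J(\sigma)}{E(\sigma)}-c_1\sigma,\qquad J(\sigma):=\int_{\partial B_\sigma(x_0)}\big|u_*(\tfrac{\partial}{\partial r})\big|^2\,d\Sigma.
\]
Differentiating $I_Q$ and using the formula for the mean curvature of $\partial B_\sigma(x_0)$ yields a constant $c_2>0$ with
\[
\frac{d}{d\sigma}\log\!\big(\sigma^{1-n}I_Q(\sigma)\big)\ \leq\ \frac{2\,K_Q(\sigma)}{I_Q(\sigma)}+c_2\sigma,\qquad K_Q(\sigma):=\int_{\partial B_\sigma(x_0)}d(u,Q)\,\tfrac{\partial}{\partial r}d(u,Q)\,d\Sigma.
\]
The crucial input here is the subharmonicity inequality $\Delta\, d^2(u(\cdot),Q)\geq 2|\nabla u|^2$, valid for harmonic maps into NPC spaces, which via Green's formula gives $K_Q(\sigma)\geq E(\sigma)$ for every $Q$; together with Cauchy--Schwarz and $|\tfrac{\partial}{\partial r}d(u,Q)|\leq|u_*(\tfrac{\partial}{\partial r})|$ it gives $K_Q(\sigma)^2\leq I_Q(\sigma)\,J(\sigma)$. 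Consequently $K_Q/I_Q\leq J/K_Q\leq J/E$.

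Putting these together at a point of differentiability, with $Q=Q(\sigma)$ and using the elementary identity $\sigma^{-1}+(n-2)\sigma^{-1}=(n-1)\sigma^{-1}$ to cancel the $\sigma^{-1}$-contributions,
\[
\frac{\Phi'(\sigma)}{\Phi(\sigma)}=2c\sigma+\frac{d}{d\sigma}\log\!\big(\sigma^{2-n}E(\sigma)\big)-\frac{d}{d\sigma}\log\!\big(\sigma^{1-n}I(\sigma)\big)\ \geq\ 2c\sigma-(c_1+c_2)\sigma+2\Big(\frac{J(\sigma)}{E(\sigma)}-\frac{K_Q(\sigma)}{I(\sigma)}\Big)\ \geq\ 0,
\]
the last bracket being $\geq 0$ by the previous step and the remainder being nonnegative once $c\geq(c_1+c_2)/2$; here $\sigma_0$ is chosen so that $\overline{B_{\sigma_0}(x_0)}$ lies inside the normal coordinate chart. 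This proves the monotonicity. The main obstacle is establishing the two variational formulas with the correct $O(\sigma)$ error terms in the singular NPC setting — for which I rely on the Korevaar--Schoen calculus and the NPC subharmonicity of $d^2(u(\cdot),Q)$ — together with the bookkeeping required to push the argument through the envelope $\min_{Q}$; once these are in place, the final combination is a one-line Cauchy--Schwarz.
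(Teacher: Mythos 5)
Your argument is correct and is essentially the proof the paper points to: the paper does not reproduce a proof but cites the frequency-monotonicity computation from the proof of \cite[Proposition 2.2]{GS92}, and your combination of the domain-variation inequality, the weak subharmonicity $\Delta d^2(u,Q)\geq 2|\nabla u|^2$, Cauchy--Schwarz, and the one-sided envelope bound for $\min_Q I_Q$ is exactly that argument (note that for the sign you need only the inequality $I'(\sigma)\leq \partial_\sigma I_{Q(\sigma)}(\sigma)$, which is the easy direction of Danskin, so your appeal to an equality there is harmless).
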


\begin{dfn}\label{def:ord}
	For $u$ and $x_0$ as in Lemma~\ref{order}, we set
	\[
	{\rm Ord}^u(x_0) = \lim_{\sigma \rightarrow 0} \frac{e^{c\sigma^2} \sigma \displaystyle{ \int_{B_\sigma(x_0)} |\nabla u|^2 d\mu} }
	{\displaystyle{\min_{Q \in \Delta(G)} \int_{\partial B_\sigma(x_0)} d^2(u,Q) d\Sigma}}.
	\]
\end{dfn} 
As a limit of non-decreasing sequence of functions, $x \mapsto {\rm Ord}^u(x)$ is a upper semicontinuous function.  Thus, we have the following:
\begin{enumerate}[label=(\alph*)]
	\item By \cite[Lemma 1.3]{GS92}, ${\rm Ord}^u(x) \geq 1$ for all $x \in \Omega$.
	\item By \cite[Theorem 6.3.(i)]{GS92},  if $x_i \rightarrow x$ and ${\rm Ord}^u(x_i)>1$, then ${\rm Ord}^u(x) >1$. 
\end{enumerate} 
\begin{lem}[\cite{GS92},  proof of Theorem 6.4]  \label{snot}
	Let $u$ be as in \cref{order} and $\tilde{\mathcal S}_0(u)$ to be the set of points $x \in \Omega$ such that ${\rm Ord}^u(x)>1$.  Then $\tilde{\mathcal S}_0(u)$ is a closed set such that $\dim_{\mathcal H}(\tilde{\cS}_0(u)) \leq n-2$. \qed
\end{lem}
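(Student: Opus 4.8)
The plan is to reproduce the Federer dimension-reduction argument of Gromov--Schoen, treating closedness and the dimension bound separately. Closedness is immediate from the facts already recorded: if $x_i\in\tilde{\mathcal S}_0(u)$ and $x_i\to x$, then ${\rm Ord}^u(x_i)>1$ for every $i$, so item (b) above (that is, \cite[Theorem~6.3.(i)]{GS92}) forces ${\rm Ord}^u(x)>1$, i.e.\ $x\in\tilde{\mathcal S}_0(u)$; hence $\tilde{\mathcal S}_0(u)$ is closed in $\Omega$. Note that upper semicontinuity of ${\rm Ord}^u$ alone would not suffice here, since $\{{\rm Ord}^u>1\}$ is a superlevel set for a strict inequality, so item (b) (or, alternatively, a ``gap'' lower bound $\{{\rm Ord}^u>1\}=\{{\rm Ord}^u\ge 1+\delta_0\}$) is genuinely used.

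For the Hausdorff-dimension estimate I would set up the blow-up analysis. Given $x_0\in\Omega$, the monotonicity of \cref{order} allows one to extract, along any sequence $\sigma_i\downarrow 0$, a subsequential limit of the rescaled maps $y\mapsto u(\exp_{x_0}(\sigma_i y))$ normalized in $L^2$ on the unit sphere; the limit $u_\star\colon\mathbb R^n\to T_{P_0}\Delta(G)$ with $P_0=u(x_0)$ is nonconstant, harmonic, and \emph{homogeneous} of degree $\alpha:={\rm Ord}^u(x_0)$, and the target $T_{P_0}\Delta(G)$ is again an $N$-dimensional $F$-connected complex. Homogeneity of $u_\star$ forces ${\rm Ord}^{u_\star}$ to be invariant under the dilations of $\mathbb R^n$ fixing the origin, so $\tilde{\mathcal S}_0(u_\star)$ is a cone; moreover the tangent map of a homogeneous map at a point $y_\star\neq 0$ is invariant under translations along $\mathbb R y_\star$ (cone splitting). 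Together with the semicontinuity of ${\rm Ord}$ under this blow-up convergence (\cite[Theorem~6.3]{GS92}), these are the structural inputs.

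With these in hand I would argue by induction on $n$. For $n=1$ a nonconstant harmonic map into an NPC space is a constant-speed geodesic, of order $1$ everywhere, so $\tilde{\mathcal S}_0(u)=\varnothing$. Assuming the bound $\dim_{\mathcal H}\le m-2$ in all dimensions $m<n$, suppose for contradiction that $d:=\dim_{\mathcal H}\tilde{\mathcal S}_0(u)>n-2$ for some $u$ on an $n$-dimensional domain (so $n\ge 2$). Federer's selection lemma, applied at a point of positive upper $\mathcal H^d$-density of $\tilde{\mathcal S}_0(u)$, produces $x_0\in\tilde{\mathcal S}_0(u)$ and scales $\sigma_i\downarrow 0$ along which the blow-up $u_\star$ above satisfies $\dim_{\mathcal H}\tilde{\mathcal S}_0(u_\star)\ge d$ (the rescaled singular sets lie inside $\tilde{\mathcal S}_0$ of the rescaled maps and Hausdorff-converge into $\tilde{\mathcal S}_0(u_\star)$ by the semicontinuity above) and with $\alpha={\rm Ord}^u(x_0)>1$. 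Since $d>n-2\ge 0$, the cone $\tilde{\mathcal S}_0(u_\star)$ has positive dimension, hence contains some $y_\star\neq 0$; applying the selection lemma once more while blowing $u_\star$ up at $y_\star$, cone splitting yields a nonconstant homogeneous harmonic map $v\colon\mathbb R^{n-1}\to\mathcal C$ of degree $>1$ into a lower-dimensional $F$-connected complex whose tangent map has $\tilde{\mathcal S}_0=(\mathbb R y_\star)\times\tilde{\mathcal S}_0(v)$ of dimension $\ge d$; hence $\dim_{\mathcal H}\tilde{\mathcal S}_0(v)\ge d-1>(n-1)-2$, contradicting the inductive hypothesis. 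This contradiction gives $\dim_{\mathcal H}\tilde{\mathcal S}_0(u)\le n-2$.

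The main obstacle is not the bookkeeping of this induction but the two analytic inputs beneath it: the blow-up compactness theory for harmonic maps into $F$-connected complexes (existence, nonconstancy, homogeneity of degree ${\rm Ord}^u(x_0)$, and cone splitting for the tangent maps), which is the technical core of \cite{GS92}; and the Federer-type selection statement guaranteeing that the Hausdorff dimension of $\tilde{\mathcal S}_0$ is not lost on passing to a tangent map. The latter is the delicate point, since Hausdorff dimension behaves badly under Hausdorff convergence in general, and controlling it requires the density-point normalization together with the monotonicity of \cref{order} and the semicontinuity of order.
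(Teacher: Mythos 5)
The paper gives no proof of this lemma---it is imported directly from the proof of Theorem 6.4 in \cite{GS92} with a \emph{qed}---and your sketch is a faithful reconstruction of exactly that argument: closedness from the recorded consequence of \cite[Theorem 6.3.(i)]{GS92} (correctly noting that upper semicontinuity of the order alone would not close a strict superlevel set), and the dimension bound by Federer dimension reduction on homogeneous tangent maps. The two inputs you leave as black boxes (the blow-up compactness/homogeneity theory and the Federer selection lemma) are precisely the content of \cite[\S\S 3, 6]{GS92}, so your proposal relies on nothing beyond what the paper itself cites.
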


\begin{lem}[\cite{GS92}, proof of Proposition 2.2, Theorem 2.3] \label{lem23}
	Let $u$ and $x_0$ be as in Lemma~\ref{order} and let $\alpha:={\rm Ord}^u(x_0)$. There exists a constant $c>0$ and $\sigma_0>0$ such that 
	\[
	\sigma \rightarrow \frac{e^{c\sigma^2}}{\sigma^{n-1+2\alpha}} \int_{\partial B_\sigma(x_0)} d^2(u,u(x_0)) d\Sigma
	\]
	and
	\[
	\sigma \rightarrow \frac{e^{c\sigma^2}}{\sigma^{n-2+2\alpha}} \int_{B_\sigma(x_0)} |\nabla u|^2 d\mu
	\]
	are non-decreasing functions in the interval $(0,\sigma_0)$. \qed
\end{lem}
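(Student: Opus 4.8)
The plan is to deduce both statements from Almgren‑type differential inequalities for $\sigma\mapsto\log I(\sigma)$ and $\sigma\mapsto\log E(\sigma)$, where throughout I abbreviate
\[
E(\sigma):=\int_{B_\sigma(x_0)}|\nabla u|^2\,d\mu,\qquad I(\sigma):=\int_{\partial B_\sigma(x_0)}d^2(u,u(x_0))\,d\Sigma .
\]
After passing to a coordinate chart centered at $x_0$ one may assume $B_\sigma(x_0)$ is a geodesic ball on which the domain metric differs from the Euclidean one by $O(|x|^2)$; every error arising from this perturbation is $O(\sigma)$ times the leading term in the inequalities below, which is exactly what the factor $e^{c\sigma^2}$ is designed to absorb. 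The three analytic inputs I would use are all available for maps into an $F$‑connected complex (hence into $\Delta(G)$ and its tangent cones) by Korevaar--Schoen and Gromov--Schoen \cite{KS,GS92}: (i) for harmonic $u$ and a fixed point $P$, $\Delta\,d^2(u,P)\ge 2|\nabla u|^2$ in the weak sense, up to the metric error; (ii) the Rellich--Pohozaev identity $\sigma E'(\sigma)=(n-2)E(\sigma)+2\sigma\int_{\partial B_\sigma(x_0)}|\partial_r u|^2\,d\Sigma$, up to the metric error; (iii) the pointwise bound $\partial_r d^2(u,u(x_0))\le 2\,d(u,u(x_0))\,|\partial_r u|$.

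First I would integrate (i) with $P=u(x_0)$ over $B_\sigma(x_0)$ and apply the divergence theorem. Writing $I(\sigma)=\sigma^{n-1}\int_{S^{n-1}}d^2(u(\sigma\omega),u(x_0))\,d\omega$ one obtains $I'(\sigma)=\tfrac{n-1}{\sigma}I(\sigma)+\int_{\partial B_\sigma(x_0)}\partial_r d^2(u,u(x_0))\,d\Sigma$, and (i) gives $\int_{\partial B_\sigma(x_0)}\partial_r d^2(u,u(x_0))\,d\Sigma\ge 2E(\sigma)$; hence
\[
\frac{d}{d\sigma}\log I(\sigma)\ \ge\ \frac{n-1}{\sigma}+\frac{2E(\sigma)}{I(\sigma)}-C\sigma .
\]
Next, by \cref{order} --- more precisely by the variant of its proof in which $\min_{Q}\int_{\partial B_\sigma(x_0)}d^2(u,Q)$ is replaced by $\int_{\partial B_\sigma(x_0)}d^2(u,u(x_0))$, cf.~\cite[Theorem 2.3]{GS92} --- the corrected frequency $\sigma\mapsto e^{c\sigma^2}\sigma E(\sigma)/I(\sigma)$ is non-decreasing on $(0,\sigma_0)$ and tends to $\alpha={\rm Ord}^u(x_0)$ as $\sigma\to 0$, so $E(\sigma)/I(\sigma)\ge \alpha\,\sigma^{-1}e^{-c\sigma^2}\ge \alpha\,\sigma^{-1}(1-c\sigma^2)$. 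Substituting into the last display yields $\tfrac{d}{d\sigma}\log I(\sigma)\ge (n-1+2\alpha)\sigma^{-1}-C'\sigma$, i.e.\ $\tfrac{d}{d\sigma}\log\!\big(e^{C'\sigma^2/2}\sigma^{-(n-1+2\alpha)}I(\sigma)\big)\ge 0$. Enlarging the constant $c$ suitably, this is the first assertion.

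For the second assertion I would run the same scheme on $E(\sigma)$. From (iii) and Cauchy--Schwarz on $\partial B_\sigma(x_0)$ together with (i),
\[
2E(\sigma)\ \le\ \int_{\partial B_\sigma(x_0)}\partial_r d^2(u,u(x_0))\,d\Sigma\ \le\ 2\,I(\sigma)^{1/2}\Big(\int_{\partial B_\sigma(x_0)}|\partial_r u|^2\,d\Sigma\Big)^{1/2},
\]
whence $\big(\int_{\partial B_\sigma(x_0)}|\partial_r u|^2\,d\Sigma\big)/E(\sigma)\ge E(\sigma)/I(\sigma)$. Feeding this and (ii) into $\tfrac{d}{d\sigma}\log E(\sigma)=E'(\sigma)/E(\sigma)$ gives $\tfrac{d}{d\sigma}\log E(\sigma)\ge \tfrac{n-2}{\sigma}+\tfrac{2E(\sigma)}{I(\sigma)}-C\sigma$, and the frequency lower bound $E(\sigma)/I(\sigma)\ge \alpha\sigma^{-1}(1-c\sigma^2)$ again yields $\tfrac{d}{d\sigma}\log E(\sigma)\ge (n-2+2\alpha)\sigma^{-1}-C''\sigma$; integrating proves that $\sigma\mapsto e^{c\sigma^2}\sigma^{-(n-2+2\alpha)}E(\sigma)$ is non-decreasing. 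The main obstacle is the rigorous bookkeeping of the error terms for a singular target: the subharmonicity inequality (i), with its non-negative defect, and the Pohozaev identity (ii) must be used in the weak Korevaar--Schoen formulation for maps into $F$‑connected complexes rather than into a smooth manifold, and one must check that the deviation of the domain metric from the Euclidean one contributes only corrections of size $O(\sigma)$ times the leading terms, uniformly for $\sigma\in(0,\sigma_0)$ --- this is precisely the technical heart of \cite[Proposition 2.2 and Theorem 2.3]{GS92}, which I would invoke for the precise estimates. Everything else is the classical frequency‑function computation.
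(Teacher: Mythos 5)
Your proposal is correct and coincides with the argument the paper relies on: the paper gives no proof of this lemma, quoting it directly from \cite[Proposition 2.2, Theorem 2.3]{GS92}, and your sketch is a faithful reconstruction of that frequency-function argument (weak subharmonicity of $d^2(u,u(x_0))$, the domain-variation/Pohozaev identity, Cauchy--Schwarz on the boundary term, and the lower bound $E(\sigma)/I(\sigma)\geq \alpha\sigma^{-1}e^{-c\sigma^2}$ coming from the monotonicity in Lemma~\ref{order}). The only point you rightly flag --- that the order defined via $\min_Q\int_{\partial B_\sigma}d^2(u,Q)$ agrees with the exponent governing $\int_{\partial B_\sigma}d^2(u,u(x_0))$ --- is likewise handled in \cite{GS92}, so deferring the error bookkeeping there is exactly what the paper does.
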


\begin{rem} \label{rem:energydensity}
	For a finite energy map $u:\Omega \rightarrow \cF$ into a $F$-connected complex,   $|\nabla u|^2\in L^1_{\rm loc}$ is not necessarily defined at all points of $\Omega$.  On the other hand,  it follows from Lemma~\ref{lem23} that 
	for a harmonic map $u$, 
	we can define $|\nabla u|^2$ at every point of $x_0 \in \Omega$ by setting
	\[
	|\nabla u|^2(x_0) = \lim_{\sigma \rightarrow 0} \frac{1}{c_n \sigma^n}
	\int_{B_\sigma(x_0)} |\nabla u|^2 \, d\mu
	\] 
	where $c_n \sigma^n$ is the volume of a ball or radius $\sigma$ in Euclidean space.  
\end{rem}

Let $u:\Omega \rightarrow \Delta(G)$ be a harmonic map and  $x_0 \in \Omega$.  Use normal coordinates centered at $x_0$ to identify $x_0=0$ and let  $\bB_r(0)=\{x=(x^1, \dots, x^n) \in \bR^n:  |x|<r\}$.  
As mentioned above, we can identify a neighborhood of $u(0)$ with a neighborhood of the origin $\cO$ of the tangent cone $T_{u(0)}\Delta(G)$.  For $\mu>0$ and $P \in T_{u(0)}\Delta(G)$, denote by $\mu P$ to be the point in $T_{u(0)}\Delta(G)$ on the geodesic ray emanating from $\cO$ and going through $P$ at a distance $\mu d(\cO, P)$ from $\cO$.  
Let 
\[
\mu(\sigma)= \left(\sigma^{1-n} \int_{\partial B_\sigma(0)} d^2(u,u(0)) d\Sigma \right)^{-1}.
\]

\begin{dfn} \label{blowups} The {\it blow up map} is defined by
	\[
	u_\sigma:\bB_1(0) \rightarrow T_{u(0)}\Delta(G), \ \ \ u_\sigma(x)=\mu(\sigma) u(\sigma x).
	\]
\end{dfn}

By \cite[Proposition 3.3]{GS92} and the paragraph proceeding it, there exists a sequence $\sigma_i \rightarrow 0$ such that  $u_{\sigma_i}$ converges locally uniformly to a non-constant homogeneous harmonic map $u_*$ of degree $\alpha:={\rm Ord}^u(x_0)$.

If ${\rm Ord}^u(x_0)=1$, then have the following: 
\begin{enumerate}[label=(\alph*)]
	\item  By~\cite[Proposition 3.1]{GS92},  there exists $m \in \{1, \dots, \min\{ n,N\} \}$ such that 
	\[
	u_*=J \circ v\big|_{B_1(0)}
	\]
	for  an isometric and totally geodesic  embedding $J:\bR^m \rightarrow T_{u(0)} \Delta(G)$ and   a  linear map $v:\bR^n \rightarrow \bR^m$ of full rank.  
	\item By \cite[Lemma 6.2]{GS92}, the union of all $N$-flats of $T_{u(0)} \Delta(G)$ containing $J(\bR^m)$ is isometric to   $\bR^m \times \cF$ where $\cF$ is a $(N-m)$-dimensional, $F$-connected complex.
	\item \label{item:splitting}By  \cite[Theorem 6.3]{GS92}, there exists $\sigma_0>0$ such that $u(B_{\sigma_0}(x_0)) \subset  \bR^m \times \cF$.  If we write 
	\begin{equation} \label{product}
		u=(u^1, u^2): B_{\sigma_0}(x_0) \rightarrow \bR^m \times \cF,
	\end{equation}
	then $u^1: B_{\sigma_0}(x_0) \rightarrow \bR^m$ is a smooth harmonic map of rank $m$ and $u^2:B_{\sigma_0}(x_0) \rightarrow \cF$ is a harmonic map  with $\alpha_2:={\rm Ord}^{u^2}(x_0) \geq 1+\epsilon$  for $\epsilon>0$. 
\end{enumerate}

%

	\begin{lem} \label{ord1}
		Let $u$ and $x_0$ be as in Lemma~\ref{order}.  Then
		\[
		{\rm Ord}^u(x_0) >1 \ \ \Leftrightarrow \ \ |\nabla u|^2(x_0) =0.
		\]
	\end{lem}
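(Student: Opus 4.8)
The plan is to prove the two implications separately, using only the monotonicity formulas already recorded in \cref{lem23}, the pointwise description of $|\nabla u|^2$ in \cref{rem:energydensity}, and the order-one structure theory \ref{item:splitting} (i.e.\ \cite[Theorem 6.3]{GS92}). For the implication $\mathrm{Ord}^u(x_0)>1\ \Rightarrow\ |\nabla u|^2(x_0)=0$, set $\alpha:=\mathrm{Ord}^u(x_0)>1$. By the second monotonicity statement of \cref{lem23}, the function
\[
\sigma\ \longmapsto\ \frac{e^{c\sigma^2}}{\sigma^{n-2+2\alpha}}\int_{B_\sigma(x_0)}|\nabla u|^2\,d\mu
\]
is non-decreasing on $(0,\sigma_0)$, hence bounded above near $0$ by a constant $K$ (its value at, say, $\sigma_0/2$); since $c>0$ we have $e^{-c\sigma^2}\le 1$, so $\int_{B_\sigma(x_0)}|\nabla u|^2\,d\mu\le K\sigma^{n-2+2\alpha}$ for small $\sigma$, and therefore
\[
\frac{1}{c_n\sigma^n}\int_{B_\sigma(x_0)}|\nabla u|^2\,d\mu\ \le\ \frac{K}{c_n}\,\sigma^{2\alpha-2},
\]
which tends to $0$ as $\sigma\to 0$ because $2\alpha-2>0$. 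By \cref{rem:energydensity} this limit is exactly $|\nabla u|^2(x_0)$, so $|\nabla u|^2(x_0)=0$. This direction is routine.

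For the converse it suffices, since $\mathrm{Ord}^u(x)\ge 1$ for all $x$ (recalled after \cref{def:ord}), to show that $\mathrm{Ord}^u(x_0)=1$ forces $|\nabla u|^2(x_0)>0$. Assume $\mathrm{Ord}^u(x_0)=1$. By the order-one structure theorem \ref{item:splitting}, after shrinking we may assume a neighborhood of $u(x_0)$ is isometric to $\bR^m\times\cF$ with $1\le m\le\min\{n,N\}$ and $\cF$ an $F$-connected complex, that $u(B_{\sigma_0}(x_0))\subset\bR^m\times\cF$, and that, writing $u=(u^1,u^2)$, the component $u^1\colon B_{\sigma_0}(x_0)\to\bR^m$ is a smooth harmonic map of rank $m$. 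Since the target carries the product metric, squared distances add, so the Korevaar--Schoen energy density satisfies $|\nabla u|^2\ge|\nabla u^1|^2$ almost everywhere; integrating over $B_\sigma(x_0)$ for $\sigma<\sigma_0$ and passing to the limit via \cref{rem:energydensity} (and continuity of $|du^1|^2$ for the smooth map $u^1$) gives
\[
|\nabla u|^2(x_0)=\lim_{\sigma\to0}\frac{1}{c_n\sigma^n}\int_{B_\sigma(x_0)}|\nabla u|^2\,d\mu\ \ge\ \lim_{\sigma\to0}\frac{1}{c_n\sigma^n}\int_{B_\sigma(x_0)}|\nabla u^1|^2\,d\mu=|du^1_{x_0}|^2 .
\]
I would then conclude by checking that $du^1$ does not vanish at $x_0$ itself: the degree-one tangent map of $u$ at $x_0$ equals $J\circ v$ with $v\colon\bR^n\to\bR^m$ linear of full rank $m$ (property (a) of the order-one structure), and because $\mathrm{Ord}^{u^2}(x_0)>1$ the $\cF$-component contributes nothing to this tangent map, so $J\circ v$ is also the tangent map of $u^1$; for a smooth map this forces $du^1_{x_0}$ to have rank $m\ge 1$. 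Hence $|\nabla u|^2(x_0)\ge|du^1_{x_0}|^2>0$.

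The main obstacle is precisely this last point: relating the rank-$m$ property of the smooth factor $u^1$ to the non-vanishing of $du^1$ \emph{at the base point} $x_0$, and, more generally, making sure the order-one structure theorem of \cite{GS92} is applied at $x_0$ rather than merely at a generic nearby point. Everything else is an immediate consequence of the monotonicity formulas already in the excerpt. (One could alternatively bypass the splitting and argue from the non-constancy of the degree-one tangent map $u_*$ together with $W^{1,2}$-convergence of the blow-ups $u_{\sigma_i}\to u_*$, but tracking the rescaling normalizations makes the product-decomposition route cleaner.)
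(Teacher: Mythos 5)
Your forward implication is correct and complete: you use the second monotonicity statement of \cref{lem23} (energy on balls) to get $\int_{B_\sigma(x_0)}|\nabla u|^2\,d\mu\le K\sigma^{n-2+2\alpha}$ and conclude via \cref{rem:energydensity}. The paper instead uses the first monotonicity statement (the boundary integral of $d^2(u,u(x_0))$), but the two routes are interchangeable and yours is, if anything, more direct.

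The converse is where the content lies, and the step you yourself flag as ``the main obstacle'' is exactly the step your proposal does not carry out. You assert that the degree-one tangent map of $u$ at $x_0$ receives no contribution from the $\cF$-factor and therefore coincides with ``the tangent map of $u^1$,'' whence $du^1_{x_0}$ has rank $m$. As written this is a genuine gap, for two reasons: the blow-ups $u_\sigma$ of \cref{blowups} are normalized by $\mu(\sigma)$, not by $\sigma^{-1}$, so convergence $u_{\sigma_i}\to J\circ v$ says nothing about $du^1_{x_0}$ until one controls $\sigma\mu(\sigma)$; and the claims that the $\cF$-component drops out of the limit and that uniform convergence of the $\bR^m$-component upgrades to $C^1$ convergence at $x_0$ both require proof. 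The paper closes this as follows: it works with the unnormalized rescalings ${}_\theta u(x)=\theta^{-1}u(\theta x)$, which by \cite[(5.14)]{GS92} converge uniformly on compact sets to a non-constant homogeneous degree-one map $L$; since $\alpha_2={\rm Ord}^{u^2}(x_0)>1$, the same monotonicity computation as in the forward direction gives $\int_{\partial B_1(0)}d^2({}_\theta u^2,u^2(0))\,d\Sigma\le C\theta^{2\alpha_2-2}\to 0$, and the maximum principle then forces ${}_\theta u^2\to u^2(0)$ uniformly on compact sets; hence ${}_\theta u^1\to L$ uniformly, and because the ${}_\theta u^1$ are smooth $\bR^m$-valued harmonic maps this convergence holds in $C^1$, so $|\nabla u^1|^2(x_0)=|\nabla\,{}_\theta u^1|^2(0)=|\nabla L|^2(0)>0$. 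With that supplied, your inequality $|\nabla u|^2(x_0)\ge|\nabla u^1|^2(x_0)$ finishes the argument; without it, the converse direction remains a plan rather than a proof.
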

	
	\begin{proof}
		First, assume $\alpha:= {\rm Ord}^u(x_0) >1$.  
		Lemma~\ref{lem23} implies that there exists a constant $C>0$ and $\sigma_0>0$ such that for $\sigma \in (0,\sigma_0)$ \[
		\int_{\partial B_\sigma(x_0)} d^2(u,u(x_0)) d\Sigma \leq C \sigma^{n-1+2\alpha}.
		\]
		By 
		\cref{rem:energydensity}, the above inequality and $\alpha>1$ imply (with $c_n$ equal to the volume of the unit ball in $\bR^n$)
		\begin{equation} \label{secgrad}
			|\nabla u|^2(x_0) = \lim_{\sigma \rightarrow 0}  \frac{1}{c_n\sigma^n} \int_{B_\sigma(x_0)} |\nabla u|^2 \, d\mu =    \lim_{\sigma \rightarrow 0}  \frac{1}{c_n \sigma^{n+1}} \int_{\partial B_\sigma(x_0)} d^2(u,u(x_0)) d\Sigma=0
		\end{equation}
		Next, assume ${\rm Ord}^u(x_0)=1$.  Use normal coordinates centered at $x_0$  and write $u=(u^1, u^2)$ as in \cref{product}.  Define ${_\theta u}=({_\theta u}^1, {_\theta u}^2)$ by setting ${_\theta u}(x)= \theta^{-1} u(\theta x)$.  From \cite[(5.14)]{GS92}, ${_\theta u} \rightarrow L$ uniformly on compact subsets  to a non-constant homogeneous degree 1 map  $L$.  Furthermore, since $\alpha_2:={\rm Ord}^{u^2}(x_0) >1$ (cf.~(c)),  arguing analogously as (\ref{secgrad}), we get
		\[
		\lim_{\theta \to 0}\int_{\partial B_1(0)} d^2({_\theta u}^2, {_\theta u}^2(0)) d\Sigma = \lim_{\theta \to 0} \frac{1}{\theta^{n+1}} \int_{\partial B_\theta (0)} d^2(u^2, u^2(0)) d\Sigma = \lim_{\theta \to 0} C\theta^{2\alpha_2-2}=0.
		\]
		By the maximum principle, this implies that ${_\theta u}^2 \to {_\theta u}^2(0) =u^2(0)$ uniformly on compact subsets of $B_1(0)$.  This in turn implies that ${_\theta u}^1 \to L$ uniformly on compact subsets of $B_1(0)$.
		Since   ${_\theta u}^1$ is a smooth harmonic map, ${_\theta u}^1 \to L$  in $C^k$ for any $k$ in any compact subset of $B_1(0)$.  
		Since 
		$
		|\nabla L|^2(0) > 0$,  we also have $  |\nabla u^1|^2(0) = |\nabla {_\theta u}^1|^2(0)> 0$.  Therefore,  $|\nabla u|^2 > 0$.   
	\end{proof} 

\begin{dfn}  \label{dfn:M}
	Let $u:\Omega \rightarrow \Delta(G)$ be a harmonic map.  For $x_0 \in \tilde \cS_0(u)$, define $m_{x_0}=0$.  For $x_0 \in \Omega \backslash \tilde \cS(u)$, let $m_{x_0}$ be  the integer $m$ in \cref{product}.   Let
	$M:=\sup_{x_0 \in \Omega} m_{x_0}$.  We say the  point $x_0 \in \Omega$ is a  \emph{critical point} if $m_{x_0} < M$.  We denote the set of critical points by $\tilde \cS(u)$.  Define $\tilde \cR(u) = \Omega \backslash \tilde \cS(u)$.
\end{dfn}

\begin{lem}\label{lem:compl}
	If $u:\Omega \rightarrow \Delta(G)$ is a non-constant harmonic map, then $\tilde \cR(u) \subset \cR(u)$.
\end{lem}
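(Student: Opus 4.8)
The plan is to reduce the statement, via the Gromov--Schoen splitting recorded in \cref{item:splitting}, to the same assertion for harmonic maps into $F$-connected complexes of \emph{strictly smaller} dimension, and then to induct on $\dim\Delta(G)$. First I would note that $M\geq 1$: since $u$ is non-constant, $|\nabla u|^2\not\equiv 0$, so by \cref{ord1} some point $x$ has ${\rm Ord}^u(x)=1$, and then $m_x\geq 1$ by \cref{item:splitting}. Now fix $x_0\in\tilde\cR(u)$, so $m_{x_0}=M\geq 1$ and in particular ${\rm Ord}^u(x_0)=1$; applying \cref{item:splitting} at $x_0$ gives $\sigma_0>0$ with $u(B_{\sigma_0}(x_0))\subseteq\bR^M\times\cF$, a splitting $u=(u^1,u^2)$ with $u^1\colon B_{\sigma_0}(x_0)\to\bR^M$ smooth harmonic of rank $M$, and $u^2\colon B_{\sigma_0}(x_0)\to\cF$ harmonic into the $(N-M)$-dimensional $F$-connected complex $\cF$, with ${\rm Ord}^{u^2}(x_0)\geq 1+\epsilon$ when $u^2$ is non-constant. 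Because $\bR^M$ is a flat factor near $u(x_0)$, a flat of $\bR^M\times\cF$ through $(w,c)$ that contains the $\bR^M$-direction is $\bR^M\times(\text{a flat of }\cF\text{ through }c)$; hence $x_0\in\cR(u)$ if and only if $x_0\in\cR(u^2)$, and it suffices to prove the latter (which is trivial if $u^2$ is constant).

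The second ingredient is that the rank is entirely carried by $u^1$ in a whole neighborhood. For $y\in B_{\sigma_0}(x_0)$ one has $|\nabla u|^2(y)\geq|\nabla u^1|^2(y)>0$ (as $u^1$ is smooth of rank $M\geq 1$), so ${\rm Ord}^u(y)=1$ by \cref{ord1}; since the tangent map of $u$ at $y$ has the rank-$M$ linear map $du^1(y)$ as its first product component, $m_y\geq M$, whence $m_y=M$ for every $y\in B_{\sigma_0}(x_0)$ (so $\tilde\cR(u)$ is open). Consequently, at any $y$ with ${\rm Ord}^{u^2}(y)=1$, where $u^2$ splits as $((u^2)^1,(u^2)^2)$ with $(u^2)^1$ of rank $m'\geq 1$ by \cref{item:splitting}, the smooth map $(u^1,(u^2)^1)$ has rank $\leq m_y=M$ at $y$, forcing $\ker du^1(y)\subseteq\ker d(u^2)^1(y)$: the Euclidean factor peeled off from $u^2$ is absorbed by the one coming from $u^1$. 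Since $\cS(u^2)$ and $\tilde\cS_0(u^2)=\{{\rm Ord}^{u^2}>1\}$ are closed of Hausdorff codimension $\geq 2$ (\cref{gs} and \cref{snot}), it follows that $du^2$ annihilates the distribution $\ker du^1$ on the open dense set $\cR(u^2)\setminus\tilde\cS_0(u^2)$, and then on all of $\cR(u^2)$ by continuity of $du^2$ there; thus on $\cR(u^2)$ the map $u^2$ is locally constant along the fibres of $u^1$.

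Finally I would run the induction on $N=\dim\Delta(G)$ (the statement being the evident one for harmonic maps into $F$-connected complexes). The base case $N\leq 1$ is immediate: then $M=1$, the complementary complex $\cF$ has dimension $0$, so $u$ is locally apartment-valued. For the inductive step: if $M=N$ then $\cF$ is a point, $u^2$ is constant and $x_0\in\cR(u^2)=\cR(u)$; if $M<N$, then $u^2$ takes values in the $(N-M)$-dimensional $\cF$, so the inductive hypothesis gives $\cS(u^2)\subseteq\tilde\cS(u^2)$, and it remains to show $x_0\notin\cS(u^2)$. Here one combines the previous paragraph with the blow-up analysis of $u^2$ at $x_0$: the identity $du^2=L\cdot du^1$ on $\cR(u^2)$ together with $m_y\equiv M$ force, via repeated application of \cref{item:splitting} inside $\cF$ (whose dimension strictly decreases at each peeling, so the process terminates), the branched remainder of $u^2$ at $x_0$ to be trivial, i.e.\ $u^2$ is flat-valued near $x_0$; hence $x_0\in\cR(u^2)=\cR(u)$. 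The main obstacle is exactly this last step: promoting \emph{``the rank is saturated by $u^1$, so $u^2$ cannot branch''} from the generic locus (the open dense set of order-$1$, topologically regular points of $u^2$) to the exceptional point $x_0$, where ${\rm Ord}^{u^2}(x_0)>1$. This is the technical heart, and it requires the unique-continuation / directional-energy bookkeeping underlying Gromov--Schoen's proofs of their Theorems 6.3 and 6.4, applied one level at a time in order to reconcile the only-locally-defined iterated splittings of $u^2$.
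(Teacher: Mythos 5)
Your proposal contains a genuine, self-acknowledged gap, and it also misses the short argument that makes the lemma elementary. You correctly establish the key preliminary fact that $m_y=M$ for every $y\in B_{\sigma_0}(x_0)$, but you then stop short of the decisive consequence: since the restriction of the splitting $u=(u^1,u^2)$ to a small ball about $y$ exhibits a smooth Euclidean factor of full rank $M=m_y$, it \emph{is} the splitting \eqref{product} at $y$, so \cref{item:splitting} gives ${\rm Ord}^{u^2}(y)>1$, and hence $|\nabla u^2|^2(y)=0$ by \cref{ord1} (i.e.\ by \eqref{secgrad}) at \emph{every} point $y$ of the ball. A harmonic map whose energy density vanishes identically on a connected ball is constant, so $u^2\equiv P_0$ and $u(B_{\sigma_0}(x_0))\subset \bR^M\times\{P_0\}$ lies in a flat; this is the paper's entire proof. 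In particular, the points $y$ with ${\rm Ord}^{u^2}(y)=1$ around which you build your kernel-containment analysis ($\ker du^1\subseteq\ker d(u^2)^1$) do not exist, and the whole inductive scaffolding on $N=\dim\Delta(G)$ is unnecessary.

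More importantly, the induction as you set it up does not close. You reduce to showing $x_0\notin\cS(u^2)$, and you candidly state that the passage from the generic order-one locus of $u^2$ to the exceptional point $x_0$ (where ${\rm Ord}^{u^2}(x_0)>1$) ``is the technical heart'' and ``requires the unique-continuation / directional-energy bookkeeping'' of Gromov--Schoen. That is precisely the step a proof must supply, and the machinery you invoke (the relation $du^2=L\cdot du^1$ on $\cR(u^2)$ together with ``repeated peeling'' inside $\cF$) is not shown to force $u^2$ to be flat-valued at a point where it could a priori branch. As written, the argument proves at best that $u^2$ is locally constant along the fibres of $u^1$ on the regular set of $u^2$, which does not by itself exclude $x_0\in\cS(u^2)$. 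The fix is not more unique continuation but the pointwise vanishing $|\nabla u^2|^2\equiv 0$ described above.
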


\begin{proof}
	Let $x_0 \in \tilde \cR(u)$; i.e.~$m_{x_0}=M$ where $M$ is as in Definition~\ref{dfn:M} and there exists $\sigma>0$ such that we can write 
	\begin{equation} \label{prod}
		u=(u^1,u^2): B_\sigma(x_0) \rightarrow \bR^M \times \cF.
	\end{equation}  
	By choosing $\sigma>0$ smaller if necessary,  we can assume that $u^1$ is of rank $M$ at all points $x \in B_{\sigma_0}(x_0)$.  Therefore, the restriction of \eqref{prod} to $B_r(x)$ is an expression of $u$ as $u=(u_1,u_2)$ as in \cref{product} in $B_r(x) \subset B_{\sigma_0}(x_0)$.  By \cref{secgrad}, $|\nabla u^2|^2(x) =0$ for all $x \in B_{\sigma_0}(x_0)$.  Thus, we conclude that $u^2 \equiv P_0$ for some $P_0 \in \cF$. 
	Hence $u(B_\sigma(x_0)) \subset \bR^M \times \{P\}$ which implies  $B_\sigma(x_0) \subset  \cR(u)$. 
\end{proof}

\begin{lem}
	The set of critical points $\tilde \cS(u)$ is a closed set of Hausdorff dimension at most $n-2$. 
\end{lem}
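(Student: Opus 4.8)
The plan is to establish the two assertions separately: first that $\tilde\cS(u)$ is closed, then that $\dim_{\cH}\tilde\cS(u)\le n-2$. Throughout I may assume $u$ is non\-constant (the constant case being vacuous), so that $M\ge 1$.

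For closedness I would show that the complement $\tilde\cR(u)=\{x\in\Omega:m_x=M\}$ is open. Fix $x_0$ with $m_{x_0}=M$. The proof of \cref{lem:compl} produces a radius $\sigma>0$ such that $u(B_\sigma(x_0))\subset\bR^M\times\{P_0\}$ for some $P_0$, together with a smooth harmonic map $u^1\colon B_\sigma(x_0)\to\bR^M$ of rank $M$ at every point of $B_\sigma(x_0)$. For $x\in B_\sigma(x_0)$ the map $u$ is smooth near $x$ (it takes values in the flat $\bR^M$), its differential $du(x)=du^1(x)$ has rank $M\ge1$, hence ${\rm Ord}^u(x)=1$ by \cref{ord1}, and the blow\-up $u_*$ of $u$ at $x$ is the linear map $du^1(x)$ of rank $M$; thus $m_x=M$. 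Hence $B_\sigma(x_0)\subset\tilde\cR(u)$, so $\tilde\cR(u)$ is open and $\tilde\cS(u)$ is closed.

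For the dimension estimate I would proceed by induction on $N=\dim\Delta(G)$, phrasing all the auxiliary statements of this section for a general $F$\-connected complex (the generality in which Gromov--Schoen prove them). When $N=0$ there is nothing to prove. For the inductive step, stratify $\tilde\cS(u)=\{m_x=0\}\sqcup\bigsqcup_{j=1}^{M-1}\{m_x=j\}$. Since a blow\-up of the form $J\circ v$ with $v$ of full rank is never constant, $m_x=0$ forces ${\rm Ord}^u(x)>1$; together with \cref{ord1} this gives $\{m_x=0\}=\tilde\cS_0(u)$, which has Hausdorff dimension at most $n-2$ by \cref{snot}. For $1\le j\le M-1$ it suffices, by second countability of $\Omega$, to show that each $x_0$ with $m_{x_0}=j$ has a neighborhood $B$ with $\dim_{\cH}(\{m_x=j\}\cap B)\le n-2$. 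Take $B$ to be a ball on which the Gromov--Schoen splitting $u=(u^1,u^2)\colon B\to\bR^j\times\cF$ holds, with $u^1$ of rank $j$ everywhere on $B$, $\dim\cF=N-j<N$, and ${\rm Ord}^{u^2}(x_0)>1$; note $u^2$ is non\-constant, for otherwise $m_x=j$ on all of $B$, contradicting $j<M$. One then checks that the flat\-factor dimension of $u$ at $x\in B$ is $m_x=j$ plus the rank of the combined differential of $u^1$ and of the flat part of $u^2$ at $x$; in particular $m_x=j$ either because ${\rm Ord}^{u^2}(x)>1$, i.e.\ $x\in\tilde\cS_0(u^2)$ (a set of dimension $\le n-2$ by \cref{snot} applied to $u^2$), or because $u^2$ does split off a flat at $x$ yet that flat differential factors through $du^1(x)$. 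Summing over $j$, the proof reduces to bounding this last locus.

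The main obstacle is precisely that last locus: the degenerate points $x\in B$ where the flat direction split off by $u^2$ becomes "dependent'' on $\bR^j$, so that $m_x=j$ even though $u^2$ is not singular at $x$. I expect this to be handled by observing that such a coincidence of differentials is a rank\-drop condition for the harmonic map $(u^1,u^2)$, hence --- in our application, where $u$ is pluriharmonic --- cut out by the vanishing of a holomorphic section (an exterior power of the multivalued $1$\-form of \cref{lem:char}-type constructions, cf.\ \cref{lem:5}), which is complex\-analytic of codimension $\ge1$ and therefore of real codimension $\ge2$. An alternative, entirely self\-contained route to the dimension bound, valid for arbitrary harmonic maps, is a Federer\-type dimension reduction mimicking the Gromov--Schoen proof of \cref{snot}: if $\dim_{\cH}\tilde\cS(u)>n-2$, blow up at a density point to obtain a homogeneous harmonic map whose critical set is a cone, iteratively split off lines to lower the dimension of the domain, and reach a contradiction with the one\-dimensional case, in which every non\-constant harmonic map is a geodesic with empty critical set.
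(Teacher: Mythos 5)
Your closedness argument is fine and is essentially the content of the proof of \cref{lem:compl}: on the ball $B_\sigma(x_0)$ produced there, $u$ takes values in $\bR^M\times\{P_0\}$ and $du=du^1$ has rank $M$ everywhere, so $m_x=M$ throughout and $\tilde\cR(u)$ is open. The dimension estimate, however, is not proved. You yourself isolate the unresolved step --- the locus where $m_x=j$ although $u^2$ is not singular, i.e.\ where the differential of the flat part of $u^2$ is absorbed into the row space of $du^1$ --- and neither of your two proposed repairs is carried out. This is not a peripheral technicality: once $\cS(u)$ and $\tilde\cS_0(u)$ are removed (both of dimension $\le n-2$ by \cref{gs} and \cref{snot}), what remains of $\tilde\cS(u)$ is exactly the set of points of $\cR(u)$ where the differential of a local $\bR^N$-valued harmonic map has rank $<M$, and bounding that rank-drop set \emph{is} the lemma. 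Your route (a) cannot work as described: the lemma is stated for an arbitrary harmonic map into $\Delta(G)$, with no pluriharmonicity assumed, and even for pluriharmonic maps the real rank-drop locus need not be complex-analytic nor of real codimension two --- for $u=({\rm Re}\,z,\,{\rm Re}(-iz^2/2))=(x,xy)$ on $\bC$, a tuple of real parts of holomorphic functions, $du$ has rank $2$ off $\{{\rm Re}\,z=0\}$ and rank $1$ on that real hypersurface. The same example shows that route (b), a dimension reduction valid for arbitrary harmonic maps into Euclidean targets, cannot close the gap either: the Gromov--Schoen monotonicity controls the order-$>1$ set, not the intermediate rank strata.

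For comparison, the paper's proof is a two-line reduction with no induction on $N$ and no splitting off of $\cF$: it writes $\tilde\cS(u)=\cS(u)\cup(\tilde\cS(u)\cap\cR(u))$ using \cref{lem:compl}, disposes of $\cS(u)$ by \cite[Theorem 6.4]{GS92}, and then handles the second piece by invoking, as a known fact, that the critical set of a harmonic map into Euclidean space has Hausdorff dimension at most $n-2$. On $\cR(u)$ the map is locally an $\bR^N$-valued harmonic map and $m_x$ is simply ${\rm rank}\,du(x)$, so your stratification by $j$ and the auxiliary complex $\cF$ add nothing; the one step your machinery cannot complete is precisely the step the paper treats as a black box. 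As written, your proposal therefore does not constitute a proof of the dimension bound.
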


\begin{proof}
	By \cref{lem:compl},  $\tilde \cS(u) = \cS(u) \cup (\tilde \cS(u) \cap \cR(u))$.   By \cite[Theorem 6.4]{GS92}, $\cS(u)$ is a closed set of Hausdorff dimension at most $n-2$.   Thus, the assertion follows from the fact that the Hausdorff dimension of the set of critical points of a harmonic map into Euclidean space is at most $n-2$. 
\end{proof}

Note that $\Delta(G)$ is a simplicial complex.  For $k=0,\dots,N$, denote the $k$-skeleton of $\Delta(G)$ by $\Delta(G)^{(k)}$.  
\begin{proposition} \label{omegastar}
Let  $u: \Omega \rightarrow \Delta(G)$ be a non-constant harmonic map from  some connected open domain $\Omega$ in $\bC^n$, that is also pluriharmonic.  Then there exists a positive integer $k$ such that,
\begin{thmlist}
	\item\label{item:subspace}  for any $x\in \cR(u)$, there exists an apartment $A$ and  an open neighborhood $\Omega_x$ of $x$ such that    $u(\Omega_x)\subset L\subset A\cap \Delta(G)^k$, where $L$ is some  $k$-dimensional affine  subspace in $A\cap \Delta(G)^{k}$.
	\item \label{item:notcontain}    $u(\Omega_x)\not\subset \Delta(G)^{k-1}$.   
\end{thmlist} 
Moreover, 	let 
	$
	\Omega^*(u)$ be the set of points $x \in \Omega$ such that there exists $r>0$ and a chamber $C$ of $\Delta(G)$ such that $u(B_r(x)) \subset \overline{C}$.  Then $\Omega^*(u)$ is an open set of full measure in $\Omega$.
\end{proposition}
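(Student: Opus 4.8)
The plan is to run the entire argument over the regular locus $\cR(u)$ of \cref{def:sing}. Two features will be used repeatedly. First, $\cR(u)$ is connected: its complement $\cS(u)$ is closed of Hausdorff codimension at least two in $\Omega\subseteq\bR^{2n}$ by \cref{gs} (i.e.~\cite[Theorem~6.4]{GS92}), and removing such a set from a connected open set does not disconnect it. Second, over any connected neighbourhood $\cN$ on which $u$ maps into an apartment $A\simeq\bR^N$, pluriharmonicity gives $\partial\bar\partial u^i=0$, so each component $u^i$ is a real-analytic harmonic function and enjoys strong unique continuation. With this in hand I would define, for $x\in\cR(u)$, the integer $k(x)$ to be the least $j$ with $u(V)\subset\Delta(G)^{(j)}$ for some neighbourhood $V$ of $x$; it is finite, and $k(x)\geq 1$, since $k(x)=0$ forces $u$ constant near $x$, hence constant on the dense connected set $\cR(u)$, hence on $\Omega$.

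Next I would prove items (i) and (ii) locally with $k=k(x)$, and simultaneously that $k(\cdot)$ is locally constant. Fix $x$ and a connected $\cN\ni x$ with $u(\cN)\subset A\cap\Delta(G)^{(k(x))}$ and $u(\cN)\not\subset\Delta(G)^{(k(x)-1)}$; picking a point whose image lies in the interior of a $k(x)$-face $F$, continuity forces $u$ into the affine span $L$ of $F$ on a small ball, and $L$ — being the intersection of the walls of $A$ through $F$ — is a $k(x)$-dimensional affine subspace contained in $A\cap\Delta(G)^{(k(x))}$. Writing $u|_\cN$ in affine coordinates on $A$ and applying unique continuation to the affine functionals cutting out $L$ yields $u(\cN)\subset L$, which is (i); (ii) holds by the choice of $\cN$. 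The same construction at $y\in\cN$ gives an affine subspace $L_y\subset A$ of dimension $k(y)$ (and $k(y)\leq k(x)$ since $u(\cN)\subset L$); if $k(y)<k(x)$ then $L_y$ is the affine span of a face of $\Delta(G)$ lying in $L$ of dimension $<\dim L$, hence lies on a wall $H$ of $L$, and $H\subset\Delta(G)^{(k(x)-1)}$; the functional defining $H$ vanishes on the image near $y$, hence on $\cN$, forcing $u(\cN)\subset H\subset\Delta(G)^{(k(x)-1)}$, a contradiction. So $k(\cdot)$ is locally constant, hence constant $=k$ on $\cR(u)$. I expect the main obstacle to be exactly this last step: one must ensure that the affine subspace into which the image collapses is a genuine wall of the apartment's simplicial structure — so that it lies in the $(k-1)$-skeleton — rather than an arbitrary hyperplane, which is where the combinatorics of Euclidean buildings (every proper face lies on a wall, affine spans of faces are subcomplexes) is used.

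Finally, for $\Omega^*(u)$: openness is immediate, since $u(B_r(x))\subset\overline C$ gives $u(B_{r/2}(y))\subset\overline C$ whenever $|y-x|<r/2$. For full measure it is enough — because $\cS(u)$ is null and $\cR(u)$ is covered by countably many charts $\cN$ as above — to show each $\cN\setminus\Omega^*(u)$ is null. On such a $\cN$ we have $u(\cN)\subset L\simeq\bR^{k}$; if $u(x')$ lies in the interior of a $k$-face of $L$, then a small ball about $x'$ maps into that face, hence into $\overline C$ for any top-dimensional chamber $C$ of $\Delta(G)$ having that face, so $x'\in\Omega^*(u)$. Therefore $\cN\setminus\Omega^*(u)\subset u^{-1}\!\big(\Delta(G)^{(k-1)}\cap L\big)$, and $\Delta(G)^{(k-1)}\cap L$ is a locally finite union of walls $H_i\subset L$. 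For each $i$, $u(\cN)\not\subset H_i$ (otherwise $k(x)\leq k-1$), so composing $u$ with an affine functional cutting out $H_i$ gives a non-trivial harmonic function on $\cN$, whose zero set $u^{-1}(H_i)\cap\cN$ has Lebesgue measure zero; a countable union completes the proof.
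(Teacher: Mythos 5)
Your proposal is correct and follows essentially the same route as the paper: unique continuation for the (real-analytic) harmonic components forces the image near a regular point into a $k$-dimensional affine subspace $L\subset A\cap\Delta(G)^{(k)}$, connectedness of $\cR(u)$ (whose complement has Hausdorff codimension two) makes $k$ globally constant, and the preimage of $\Delta(G)^{(k-1)}$ is null because it is a countable union of zero sets of non-trivial harmonic functions. The only cosmetic difference is in that last step, where the paper invokes pluriharmonicity to realize each such function locally as the real part of a holomorphic function before concluding its zero set is null, while you appeal directly to real-analyticity; both are fine.
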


\begin{proof}   We first prove the following claim.
\begin{claim}\label{claim:contain}
	Let $u: \Omega_0 \to \bR^N$ be a pluriharmonic map from a connected open domain $\Omega_0 \subset \bC^n$, and suppose that for some open subset $U \subset \Omega_0$, we have $u(U) \subset \bR^\ell \times \{(0,\ldots,0)\}$. Then $u(\Omega_0) \subset \bR^\ell \times \{(0,\ldots,0)\}$.
\end{claim}
\begin{proof}
	Write $u = (u_1, \ldots, u_N)$. By the assumption, for each $i \in \{\ell+1, \ldots, N\}$, we have $u_i(x) = 0$ for all $x \in U$. Since $u_i$ is a harmonic function, it follows that $u_i \equiv 0$ for each $i \in \{\ell+1, \ldots, N\}$. This proves the claim.
\end{proof} 
\begin{claim}\label{claim:smaller}
Assume that  $\cN$ is a  connected open subset of $\cR(u)$ such that there exists an apartment $A$ of $\Delta(G)$ such that $u(\cN) \subset A \cap \Delta(G)^k$ and $u(\cN) \not\subset \Delta(G)^{k-1}$. Then there exists a $k$-dimensional affine subspace in $A \cap \Delta(G)^k$ that contains the image $u(\cN)$. 
\end{claim}
\begin{proof}
	Let $E := \Delta(G)^k \cap A$. Then $E$ is a countable union of $k$-dimensional affine subspaces of $\bR^N$. Since $A \backslash \Delta(G)^{k-1}$ is an open subset of $A$, the preimage $\Omega_0 := u^{-1}(A \backslash \Delta(G)^{k-1})$ is an open subset of $\cN$. By assumption, $\Omega_0$ is non-empty. Furthermore, $u(\Omega_0) \subset A \cap (\Delta(G)^k \backslash \Delta(G)^{k-1})$. 
	
	Note that $A \cap (\Delta(G)^k \backslash \Delta(G)^{k-1}) = \bigsqcup_{i=1}^\infty E_i$, where each $E_i$ is an open subset of some $k$-dimensional affine subspace $L_i \subset \Delta(G)^k \cap A$. Thus, for some $L_i$, we have $u^{-1}(L_i) \cap \cN \neq \varnothing$. By \cref{claim:contain}, we conclude that $u(\cN) \subset L_i$. This proves the claim.
\end{proof}

\begin{claim}\label{claim:zero}
	Let $\cN$ be as in \cref{claim:smaller}. Then $u^{-1}(\Delta(G)^{k-1}) \cap \cN$ has zero Lebesgue measure.
\end{claim}
\begin{proof}
	By \cref{claim:smaller}, we have $u(\cN) \subset L\subset \Delta(G)^k$, where $L$ is a $k$-dimensional affine subspace of $\Delta(G)^k \cap A$. Note that $\Delta(G)^{k-1} \cap L=\cup_{j=1}^{\infty}H_j$, where each $H_j$ is an    affine hyperplanes  in $L$.  Fix some $H_j$ such that $u^{-1}(H_j)\cap \cN\neq\varnothing$.   We can choose a coordinate system $(x_1, \ldots, x_k)$ on $L$ such that $H_j$ is given by $(x_1 = 0)$. Write $u = (u_1, \ldots, u_k)$ with respect to this coordinate system. Then $$u^{-1}(H_j) \cap \cN = \{z \in \cN \mid u_1(z) = 0\}.$$ 
 	Note that $u_1|_\cN$ is not identically zero; otherwise, $u(\cN) \subset H \subset \Delta(G)^{k-1}$, contradicting the assumption. Let $E$ be the critical set of $u_1$. Then $E$ is a proper  analytic subset of $\cN$. Indeed, since $u$ is pluriharmonic by our assumption, it follows that $\partial\bar{\partial}u_1=0$, which implies that locally $u_1$ is the real part of a holomorphic function $h$. Thus, the critical set of $u$ coincides with the critical set of $h$, which is a proper analytic subset.
	
	On $\cN \backslash E$, the derivative $du_1$ is nowhere zero. Hence, $u_1^{-1}(0) \cap (\cN \backslash E)$ is a real $(2n-1)$-dimensional smooth submanifold of $\cN$, which has Lebesgue measure zero. Therefore, $u^{-1}(H_j)\subset E\cup u_1^{-1}(0) \cap (\cN \backslash E)$  has zero Lebesgue measure.  Since  $\cN\cap u^{-1}(\Delta(G)^{k-1})=\cup_{j=1}^\infty u^{-1}(H_j)$,  it follows that   $\cN\cap u^{-1}(\Delta(G)^{k-1})$ has zero Lebesgue measure. The claim is proved.
\end{proof}
We are now ready to prove the lemma.  

For any $x_0 \in \cR(u)$, we can choose a connected open neighborhood $\Omega_{x_0}$ of $x_0$ such that there exists an apartment $A$ of $\Delta(G)$ and a unique $k$ satisfying \( u(\Omega_{x_0}) \subset A \cap \Delta(G)^k \) and \( u(\Omega_{x_0}) \not\subset \Delta(G)^{k-1} \). We denote \( c(\Omega_{x_0}) := k \). By \cref{claim:smaller}, there exists a $k$-dimensional affine subspace \( L \) in \( A \cap \Delta(G)^k \) that contains the image \( u(\Omega_{x_0}) \).  

Let \( H \subset L \cap \Delta(G)^{k-1} \) be an affine hyperplane of \( L \). Then for any open subset \( U \subset \Omega_{x_0} \), we have \( u(U) \not\subset H \); otherwise, by \cref{claim:smaller}, it would follow that \( u(\Omega_{x_0}) \subset H \), contradicting the assumption that \( u(\Omega_{x_0}) \not\subset \Delta(G)^{k-1} \). This shows that, for any connected open subset \( U \subset \Omega_{x_0} \), we always have \( c(U) = c(\Omega_{x_0}) \).  

Thus, we can define a function \( c : \cR(u) \to \bZ \) by setting \( c(x_0) = c(\Omega_{x_0}) \). This function is well-defined and does not depend on the choice of \( \Omega_{x_0} \). Moreover, \( c \) is locally constant. Since \( \cR(u) \) is connected, it follows that \( c : \cR(u) \to \bZ \) is a constant function. Let \( k \) denote its value. Then \cref{item:subspace,item:notcontain} follow from \cref{claim:smaller} together with the definition of \( c : \cR(u) \to \bZ \).  

Now, for any \( x_0 \in \Omega \), let \( \Omega_{x_0} \) be a connected open neighborhood of \( x_0 \) as above. By \cref{claim:zero}, \( u^{-1}(\Delta(G)^{k-1}) \cap \Omega_{x_0} \) has zero Lebesgue measure. Since \( \Omega \) can be covered by countably many such open subsets, it follows that \( u^{-1}(\Delta(G)^{k-1}) \) has zero Lebesgue measure.  

Define \( \Omega^*(u) := \cR(u) \backslash u^{-1}(\Delta(G)^{k-1}) \). Since \( \Omega \backslash \cR(u) \) has Hausdorff codimension at least two, \( \Omega^*(u) \) has full measure in \( \Omega \). Note that \( u(\Omega^*(u)) \subset \Delta(G)^k \backslash \Delta(G)^{k-1} \), which is a disjoint union of \emph{facets} in \( \Delta(G)^k \) (cf. \cite[\S 2]{Rou09} for the definition). Each facet \( F \) in \( \Delta(G)^k \backslash \Delta(G)^{k-1} \) is an open subset of \( \Delta(G)^k \). Consequently, \( u^{-1}(F) \cap \cR(u) \) is an open subset of \( \Omega^*(u) \).  

Since each such \( F \) is contained in the closure of some chamber of \( \Delta(G) \), it follows that \( \Omega^*(u) \) satisfies the properties stated in the final assertion of the proposition.  

We complete  the proof of the proposition.   
\end{proof} 

\begin{rem}
	Note that \cref{omegastar} is of independent interest. For instance, it played a crucial role in \cite{DW24b} in the study of Koll\'ar’s conjecture on the positivity of the holomorphic Euler characteristic for varieties with large fundamental groups.
\end{rem}

\begin{proposition} \label{pretheorem1.1}
	Let $u_0,u_1:\Omega \rightarrow \Delta(G)$  be harmonic maps from a bounded Riemannian domain. If  $
	d(u_0,u_1)=c$ for some constant $c\geq 0$ and $|\nabla u_0|^2=|\nabla u_1|^2$, then for almost every point  $x\in \Omega$, there exists  $r>0$ satisfying the following:
	\begin{thmlist}
		\item There is a $N$-flat $A$ containing both $u_0(B_r(x))$ and $u_1(B_r(x))$;
		\item  If we fix an isometry $\nu:A\to \bR^N$, then $\nu\circ u_0:B_r(x)\to \bR^N$ is a translation of $\nu\circ u_1:B_r(x) \to \bR^N$.  
	\end{thmlist}
\end{proposition}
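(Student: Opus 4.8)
The plan is to work at a point $x$ in the full-measure set $\Omega^*(u_0) \cap \Omega^*(u_1)$, where (by \cref{omegastar}) both maps send a small ball into the closed chamber of $\Delta(G)$; in particular both are smooth there, taking values in a common apartment locally. So fix such an $x$, pick $r>0$ small so that $u_0(B_r(x)) \subset \overline{C_0}$ and $u_1(B_r(x))\subset \overline{C_1}$ for chambers $C_0,C_1$, and choose apartments $A_0 \supset \overline{C_0}$, $A_1\supset \overline{C_1}$. After identifying $A_0 \simeq \bR^N$ via an isometry $\nu_0$, the map $w := \nu_0 \circ u_0$ is a smooth (pluri)harmonic map $B_r(x)\to \bR^N$. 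The first task is to produce the flat $A$ containing both images. Because $d(u_0,u_1)\equiv c$, for each $p\in B_r(x)$ the point $u_1(p)$ lies on the sphere of radius $c$ about $u_0(p)$; I would use the local structure of $\Delta(G)$ near a point of $\overline{C_0}$ — namely that a neighborhood is isometric to a neighborhood of the origin of the tangent cone, which splits as $\bR^m \times \cF$ for some $F$-connected complex $\cF$ — together with the fact that geodesics from $u_0(p)$ of length $c$ sweep out the tangent cone, to show that after shrinking $r$ the segment from $u_0(p)$ to $u_1(p)$ stays in a single flat. The cleanest route: the function $p \mapsto d(u_0(p),u_1(p))^2$ is constant, hence its Hessian vanishes; in an NPC space the second variation formula for the distance between two harmonic maps (the Jäger–Kaul / Korevaar–Schoen convexity computation) forces the geodesic homotopy $t\mapsto u_t$ between $u_0$ and $u_1$ to consist of harmonic maps, and forces the "Jacobi field" along it to be parallel. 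This is precisely the mechanism that yields the splitting $\bR^m\times\cF$ in Gromov–Schoen's argument, and it gives a totally geodesic flat strip interpolating $u_0(B_r(x))$ and $u_1(B_r(x))$; enlarging to a maximal flat $A$ of dimension $N$ proves (i).

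With $A$ fixed and $\nu:A \to \bR^N$ an isometry, write $\nu\circ u_0 = (f_1,\dots,f_N)$ and $\nu\circ u_1 = (g_1,\dots,g_N)$: two smooth harmonic (indeed pluriharmonic) maps into $\bR^N$. We are given $|\nabla u_0|^2 = |\nabla u_1|^2$, i.e. $\sum_i |\nabla f_i|^2 = \sum_i |\nabla g_i|^2$ pointwise; we are also given $\sum_i (f_i - g_i)^2 \equiv c^2$. The goal is to show $f_i - g_i$ is constant for each $i$. Differentiating the second identity once gives $\sum_i (f_i - g_i)\, df_i = \sum_i (f_i-g_i)\, dg_i$; differentiating again and using harmonicity ($\Delta f_i = \Delta g_i = 0$) gives $\sum_i |d(f_i-g_i)|^2 + \sum_i (f_i-g_i)\,\Delta(f_i-g_i) = \sum_i \langle df_i, df_i\rangle - \langle dg_i,dg_i\rangle$-type terms; more precisely, applying $\Delta$ to $\sum_i (f_i-g_i)^2 \equiv c^2$ yields $0 = 2\sum_i |d f_i - dg_i|^2 + 2\sum_i (f_i - g_i)\Delta(f_i - g_i)= 2\sum_i|df_i - dg_i|^2$, since each $f_i-g_i$ is harmonic. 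Hence $d(f_i - g_i) = 0$ for every $i$, so $\nu\circ u_0$ and $\nu\circ u_1$ differ by the constant vector $(f_i - g_i)_i$, which proves (ii). (The pluriharmonicity is a bonus that makes the component functions real parts of holomorphic functions, but only harmonicity is needed for this computation.)

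The main obstacle is step~(i): producing a single flat $A$ containing both local images. On the chamber interior both maps are smooth and land in apartments, but a priori the apartments $A_0, A_1$ could differ, and one must use the constancy of $d(u_0,u_1)$ to glue them. The rigorous tool is the convexity/second-variation analysis along the geodesic interpolation $u_t$: one shows $t \mapsto E^{u_t}$ is convex with $E^{u_0}=E^{u_1}$ (here I would invoke that $|\nabla u_0|^2=|\nabla u_1|^2$ gives equal energies and that Gromov–Schoen/Korevaar–Schoen convexity is strict unless the interpolation is "flat"), concluding that each $u_t$ is harmonic and the variation field $V = \frac{d}{dt}u_t$ is parallel of constant norm $c$; parallelism of $V$ together with $u_0(B_r(x))$ lying in the flat $A_0$ forces $u_0(p)$, $u_1(p)$ and the whole segment between them to lie in a common flat, which one then extends to a maximal flat $A$. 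Care is needed because $\Delta(G)$ is only piecewise Euclidean, so all of this must be carried out in the tangent cone model $\bR^m\times\cF$ where the notions of parallel field and flat strip make literal sense; restricting $r$ so that the images stay within one such local model (guaranteed by $x\in\Omega^*(u_0)\cap\Omega^*(u_1)$ and continuity) is what makes the argument go through.
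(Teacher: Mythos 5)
Your part (ii) is exactly the paper's computation: once both local images lie in a common flat $A\simeq\bR^N$, the convexity of apartments gives $|\nu\circ u_0-\nu\circ u_1|\equiv c$, and applying $\Delta$ to $|\nu\circ u_0-\nu\circ u_1|^2$ together with harmonicity of the components kills the gradient of the difference, so the two maps differ by a translation. That part is fine.

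For part (i), however, you have manufactured a difficulty that is not there, and the argument you sketch to resolve it is not actually carried out. After restricting to the full-measure set $\Omega^*(u_0)\cap\Omega^*(u_1)$ from \cref{omegastar}, you have $u_0(B_r(x))\subset\overline{C_0}$ and $u_1(B_r(x))\subset\overline{C_1}$ for two chambers $C_0,C_1$. At this point one of the defining axioms of a building (and its enlarged version $\Delta(\sD G)\times V$) applies directly: \emph{any two chambers are contained in a common apartment}. So one simply takes $A$ to be an apartment containing both $C_0$ and $C_1$; no "gluing" of $A_0$ and $A_1$ is required, and neither the constancy of $d(u_0,u_1)$ nor the equality of energy densities is needed for this step. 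This is the paper's entire proof of (i). By contrast, your proposed route --- second variation of $d^2(u_0,u_1)$, harmonicity of the geodesic interpolation $u_t$, parallelism of the variation field, flat strips in the tangent cone model --- is left at the level of "I would use" and "the cleanest route is", with none of the convexity or rigidity statements actually verified in the piecewise-Euclidean setting; as written it is a gap. The machinery you invoke is the engine behind the Gromov--Schoen splitting theorems used elsewhere in the paper, but here it is unnecessary: the elementary combinatorial fact about apartments already places both local images in one $N$-flat, and the rest is the Euclidean computation you gave.
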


\begin{proof}
	For $i=0,1$, let $\Omega^*(u_i)$ be the open set of full measure as in Lemma~\ref{omegastar}.   Thus, $\Omega^*(u_0) \cap \Omega^*(u_1)$ is of full measure.  Lemma~\ref{omegastar} implies that, for any $x_0 \in \Omega^*(u_0) \cap \Omega^*(u_1)$, there exists $r>0$ and a chamber $C_i$ such that $u_i(B_r(x_0)) \subset C_i$ for $i=0,1$.  Let $A$ be $N$-flat containing chambers $C_0$ and $C_1$ and $\nu: A \rightarrow \mathbb R^N$ be an isometry.  Thus, $v \circ u_0$ and $v \circ u_1$ are harmonic maps into $\mathbb R^N$.  The assumption that $d(u_0, u_1)=c$ implies that $|\nu \circ u_0(x)-\nu \circ u_1(x)|=c$.  Thus, $0=\Delta |\nu \circ u_0-\nu \circ u_1|^2= 2\left|\nabla  (\nu \circ u_0-\nu \circ u_1) \right|^2$ which implies $\nu \circ u_0$ is a translation of $\nu \circ u_1$. 
\end{proof} 
 We are able to prove \cref{main:unicity}.
\begin{proof}[Proof of \cref{main:unicity}]
	The assertion follows immediately from Proposition~\ref{prop:uv} and Proposition~\ref{pretheorem1.1}.
\end{proof}
\begin{proof}[Proof of \cref{item:unicity}]
	By \cref{main:unicity}, there exists a dense open subset $\widetilde{X}^\circ\subset \widetilde{X}$ of  full Lebesgue measure such that,  for any $x\in \widetilde{X}^\circ$, 
	\begin{enumerate}[label=(\alph*)]
		\item there exists an open neighborhood $\Omega$ of $x$ and an apartment $A$ of $\Delta(G)$ such that
		$\tilde{u}_i(\Omega)\subset A$ for $i=0,1$;
		\item the map $\tilde{u}_0|_{\Omega}:\Omega\to A$ is a translate of $\tilde{u}_1|_{\Omega}:\Omega\to A$ 
	\end{enumerate}
	By the construction  in \cite{BDDM}, the  multivalued 1-forms $\eta_i $  induced $\tilde{u}_i$  for $i=0,1$ are equal over $\widetilde{X}^\circ$, and splitting over $\widetilde{X}^\circ$.  By \cref{def:multivalued}, we conclude that $\eta_1=\eta_2$ over the entire $X$. The claim is proved. 
\end{proof}
\section{On the singular set of harmonic maps into Euclidean buildings}\label{sec:singular}
In this section, we apply \cref{lem:5} and the results from \cref{subsec:unicity}
to prove \cref{item:singular}, following the idea by Eyssidieux in \cite[Proposition 1.3.3]{Eys04}.
\begin{thm}[=\cref{item:singular}]\label{thm:singular}
	Let $X$, $\varrho$, $G$ and $\tilde{u}$ be as in \cref{main:harmonic}.   Then the singular set $\cS(u)$  defined in \cref{def:sing} is contained in a proper Zariski closed subset of $X$.
\end{thm}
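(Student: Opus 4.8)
The plan is to exploit the spectral data attached to $\tilde u$ by \cref{lem:5} together with the stratification of $\cR(u)$ coming from \cref{omegastar}. Recall that by \cref{lem:5} the pluriharmonic map $\tilde u$ induces a splitting multivalued logarithmic $1$-form $\eta$ on $(\overline X,\Sigma)$; over $\cR(u)$ it is locally given by holomorphic $1$-forms $\{\omega_1,\dots,\omega_{N\ell}\}$ with $|\nabla u|^2 = 2\sum_j|\omega_j|^2$, and more generally for each $p\in\{1,\dots,n\}$ a multivalued section $\eta^p$ of $\Omega^p_{\overline X}(\log\Sigma)$. By \cref{omegastar} there is a well-defined integer $k$ such that, over $\cR(u)$, the map $u$ locally takes values in a $k$-dimensional affine subspace $L$ of an apartment, with $u(\Omega_x)\not\subset\Delta(G)^{k-1}$. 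I claim that, away from a proper Zariski closed set, the multivalued section $\eta^{k+1}$ vanishes identically: indeed on the stratum $\cR(u)$, the $\mathbb C$-span of the local $\partial u_1,\dots,\partial u_k$ (in the notation of \cref{item:ortho}) has rank $\le k$ at every point, because $u$ is smooth with image in a $k$-dimensional flat, so any $(k+1)$-fold wedge of these forms is zero; the multivalued section $\eta^{k+1}$ built from wedges of the $\omega_j$ is then zero on $\cR(u)$, hence zero everywhere on $X$ by \cref{def:multivalued} (a multivalued section whose every branch vanishes on a dense open set is the zero section). Similarly $\eta^k\neq 0$ somewhere, since $u$ is genuinely $k$-dimensional on $\cR(u)$.

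The next step is to translate "$x\in\cS(u)$" into an algebraic condition. The key point, following Eyssidieux \cite[Prop.~1.3.3]{Eys04}, is that the order function ${\rm Ord}^u$ and the local product structure \eqref{product} detect the drop in dimension: a point $x\in X$ lies in the regular set exactly when the local splitting of $u$ near (a lift of) $x$ has a Euclidean factor of the maximal dimension $k$, equivalently when $\eta^k$ does not vanish to "too high order" at $x$ in a sense made precise by the energy-density blow-up. Concretely, I would argue: $x\in\cR(u)$ if and only if, near $x$, the multivalued $k$-form $\eta^k$ is locally (after passing to the spectral cover) generated by a single branch that is nonvanishing — this is where the ortho-normality relations \eqref{eq:orth} and \eqref{eq:orth2} in \cref{item:ortho} are used, since they guarantee that on the regular set the $k$ forms $\omega_{1,1},\dots,\omega_{1,N}$ restricted to the $k$-flat form, up to an orthogonal constant matrix, the differentials $\partial u_1,\dots,\partial u_N$ of the coordinates, so that $\eta^k$ is literally $\partial u_1\wedge\cdots\wedge\partial u_k$ up to a unit. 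Thus the non-vanishing locus of the characteristic section $\sigma_{\min}$ of $\eta^k$ (the lowest nonzero elementary-symmetric coefficient), pulled back to the spectral cover $\overline{\xsp}$ of \cref{thm:spectral} and then to a log resolution $\overline Y$, is Zariski open; its complement is a proper Zariski closed subset $\Xi_Y\subset\overline Y$, and I take $\Xi$ to be (the image in $X$ of) the union of $\pi(\mu(\Xi_Y))\cap X$ with the branch locus and the indeterminacy/exceptional loci, which is proper Zariski closed in $X$. One then checks $\cS(u)\subset\Xi$: if $x\notin\Xi$ then on a neighbourhood the branch of $\eta^k$ is a nonvanishing decomposable $k$-form whose kernel foliation is integrable (as $d$ of each branch is zero, $\eta$ being closed), and integrating gives a local submersion onto an open set in $\mathbb R^k\subset A$ through which $u$ factors with $u$ mapping into that flat — hence $x\in\cR(u)$.

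The main obstacle I anticipate is making rigorous the equivalence "$x\in\cR(u)\iff$ the spectral $k$-form is non-degenerate at $x$", i.e.\ showing that the purely algebraic non-vanishing condition on $\overline Y$ genuinely recovers the differential-geometric regular set. The subtle direction is that non-vanishing of the branch of $\eta^k$ forces $u$ to land locally in a single $k$-flat: one has the local forms $\omega_{1,1},\dots,\omega_{1,N}$ on a neighbourhood $\Omega_x$ of a lift $\tilde x$, they are closed (branches of a logarithmic form that is $\partial$ of a pluriharmonic function), hence each $\omega_{1,j}+\bar\omega_{1,j}=2\,\mathrm{Re}\,\omega_{1,j}$ is an exact real $1$-form $d\phi_j$, and one must show the map $(\phi_1,\dots,\phi_N):\Omega_x\to\mathbb R^N$ agrees with an isometric embedding of $u$ into an apartment. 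This is exactly the content of \cref{item:ortho}\eqref{eq:orth2} read in reverse, combined with the monodromy analysis of \cref{claim:monodromy}: on the spectral cover the single-valued branches $\{\omega_i\}$ are permuted by the Galois group, and the regular locus is precisely where the Galois action on the relevant branch is trivial and the branch is nowhere zero, so the local primitive $(\phi_1,\dots,\phi_N)$ descends and realises $u$ as harmonic into a fixed flat. Once this dictionary is in place, the $L^1_{\mathrm{loc}}$-identity $|\nabla u|^2 = 2\sum|\omega_j|^2$ from \cref{lem:density} and the upper semicontinuity of ${\rm Ord}^u$ (\cref{snot}) together with \cref{ord1} close the argument: the bad set is contained in the common zero locus of finitely many algebraic sections on $\overline Y$, hence descends to a proper Zariski closed $\Xi\subset X$ with $\cS(\tilde u)\subset\pi_X^{-1}(\Xi)$.
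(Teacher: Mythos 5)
Your proposal is built on the wrong integer, and this breaks the detection mechanism at the start. The $k$ you take from \cref{omegastar} measures the skeleton level of the local image of $u$ on $\cR(u)$ (the smallest $k$ with $u(\Omega_x)\subset\Delta(G)^{(k)}$), not the rank of the map; the paper's argument instead uses the integer $M$ of \cref{dfn:M}, the maximal dimension of the Euclidean factor in the Gromov--Schoen order-one splitting \eqref{product}, and forms $\eta^M$. These integers genuinely differ: if $u$ maps a neighborhood onto a segment of a line in general position inside a two-dimensional apartment, then $k=2$ (the line is not contained in $\Delta(G)^{(1)}$) while the map has rank one, so every branch of $\eta^{2}$ is a wedge of proportional forms and $\eta^{k}\equiv 0$. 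Hence your claims that ``$\eta^{k+1}$ vanishes identically'' while ``$\eta^{k}\neq 0$ somewhere'' are incompatible with your choice of $k$: the lowest nonzero coefficient $\sigma_{\min}$ of $\eta^k$ may not exist, and the proposed $\Xi$ is then either undefined or all of $X$.

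Even with $M$ in place of $k$, the central step of your plan is not established. The equivalence you announce, ``$x\in\cR(u)$ iff the spectral $M$-form is nondegenerate at $x$,'' is false in the direction you do not need (at a critical point of the smooth factor $u^1$ the point is still regular, yet all branches of $\eta^M$ vanish there); the paper only proves, and only needs, the inclusion $\cS(u)\subset\tilde\cS(u)=X\setminus X^\circ$ via \cref{lem:compl} and \cref{claim:ord2}. For the direction you do need --- a nonvanishing branch at $x$ forces $x\in\cR(u)$, or equivalently that singular/critical points lie in the degeneracy locus --- your argument is circular: away from $\cR(u)$ there is no a priori identification between the holomorphically extended branches of $\eta$ (obtained by the Hartogs-type extension in \cref{prop:spectral} and \cref{lem:5}) and the differentials of $u$, so ``integrating the branch'' does not produce a flat containing the image of $u$; creating that link at points not yet known to be regular is precisely the analytic content of the theorem. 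The paper supplies it by working at a point $x_0\in\tilde\cS(u)$: the order-one splitting $u=(u^1,u^2)$ with ${\rm Ord}^{u^2}(x_0)>1$, \cref{ord1} giving $|\nabla u^2|^2(x_0)=0$, the orthogonal comparison \eqref{eq:orth}--\eqref{eq:orth2} between the local spectral forms of $(u^1,u^2)$ and the global $\eta$, and the estimate \eqref{eq:tends}, which together show every branch of $\eta^M$ tends to zero at $x_0$, so $\tilde\cS(u)$ is exactly the Zariski-closed locus where $\eta^M$ degenerates. Your sketch contains none of these ingredients beyond a passing citation of \cref{ord1}, so the main step is missing.
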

\begin{proof}
	We assume that $\tilde{u}$ is non-constant. 	We shall use the notions in \cref{subsec:unicity} with $\Omega$ being $\widetilde{X}$. Let $M$ be the positive integer defined in \cref{dfn:M}.  Let $\eta$ be the logarithmic multivalued 1-form induced by  $\tilde{u}$ defined in \cref{lem:5}.  Let   $Z_\eta=\sum_{i=1}^{k}n_iZ_i$ be the formal sum  corresponding to $\eta$  defined in \cref{def:multivalued}, where each $Z_i$ is an irreducible closed subvariety of $E$ such that the natural map $Z_i\to X$ is surjective and finite. Let $|Z_\eta|=\cup_{i=1}^{k}Z_i\subset \Omega_{ \overline{X}}(\log \Sigma)$ be the support of $Z_\eta$. 
	
	Let $M$ be the positive integer defined in \cref{dfn:M}.	Consider the holomorphic  bundle  $E:=\Omega^M_{ \overline{X}}(\log \Sigma)$ on $\overline{X}$.  By \cref{item:p}, $\eta$ induces a multivalued section $\eta^M$ of $E$.     
	Let $|Z_{\eta^M}|\subset E$ be the support of the formal sum $Z_{\eta^M}$ induced by    ${\eta^M}$ defined in \cref{def:multivalued}.   Let $\overline{X}^\circ$ be the set of points $x$ in $\overline{X}$ such that  $|Z_{\eta^M}|_x\not\subset \{0\}$. We shall prove that the Zariski open subset $\overline{X}^\circ$ is dense in $\overline{X}$.
	
	By our definition of $M$ in \cref{dfn:M} and \cref{lem:compl}, for any point $x_0\in \tilde{\cR}(u)$, there exists $r>0$ such that:
	\begin{enumerate}[label=(\arabic*)]
		\item   for some connected component $\Omega$ of $\pi_X^{-1}(B_r(x_0))$, $\pi_X|_{\Omega}:\Omega\to B_r(x_0)$ is an isomorphism, where $B_r(x_0)$  is the geodesic ball centered at $x_0$ of radius $r$. 
		\item  We have the decomposition 	\begin{equation*}  
			\tilde{u}\circ (\pi_{X}^{-1}|_{\Omega})|_{B_r(x_0)}=(u^1,u^2): B_r(x_0) \rightarrow \bR^M \times \{P_0\},
		\end{equation*}  
		where  $u^1$ is a harmonic map with rank $M$ at each point of $B_r(x_0)$.
	\end{enumerate}  By \cref{item:ortho}, $\eta$ is represented by  $\partial u^1$ up to some orthogonal transformation and rescaling. It follows that $|Z_{\eta^M}|_x$ is not $\{0\}$ for every $x\in B_r(x_0)$. Hence we have
\begin{align}\label{eq:inclusion}
	\tilde{\cR}(u)\subset \overline{X}^\circ,
\end{align}
which implies that $ \overline{X}^\circ$ is  non-empty.  Since $\overline{X}^\circ$ is Zariski open in $\overline{X}$, it follows that $ {X}^\circ:=X\cap \overline{X}^\circ$ is  a dense  and Zariski open subset of $X$. The theorem follows from  \cref{lem:compl} together with  \cref{claim:ord2} below.
\end{proof}

\begin{lem}\label{claim:ord2}
	We have $ \tilde{\cS}(u) =  X\backslash {X}^\circ$. 
\end{lem}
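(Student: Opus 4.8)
Here is the plan.

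\smallskip

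\noindent\emph{Plan.} The strategy is to identify \emph{both} $\tilde{\cS}(u)$ and $X\setminus X^\circ$ with a single Zariski‑closed set. Write $\overline X^\circ$ for the set introduced in the proof of \cref{item:singular} (the $x$ with $|Z_{\eta^M}|_x\not\subset\{0\}$, $\eta^M$ the multivalued $M$-form of \cref{lem:5}), so that $X\setminus X^\circ=X\cap Z_M$ with $Z_M:=\overline X\setminus\overline X^\circ$ Zariski closed. For a multivalued $1$-form call the \emph{rank} at $y$ the largest dimension spanned by its branches over $y$. By \cref{item:ortho}, over $y\in\cR(u)$ the branches of $\eta$ are, up to an orthogonal transformation and rescaling, the components $\partial u_1,\dots,\partial u_N$ of $\tilde u$ in an apartment chart at any lift; hence $\rank\eta(y)$ is the rank of that differential, a branch of $\eta^M$ is nonzero at $y$ exactly when $\rank\eta(y)=M$, and $X\cap Z_M=\{y\in X:\rank\eta(y)<M\}$. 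Using the Gromov-Schoen splitting \cref{product} --- $\tilde u=(u^1,u^2)$ near any $\tilde y$ with ${\rm Ord}^u(\tilde y)=1$, where $u^1$ is smooth of rank $m_{\tilde y}$ and $u^2$ has order $\ge 1+\epsilon$, so $\partial u^2(\tilde y)=0$ --- one gets $\rank\eta(y)\le m_{\tilde y}$ for $y\in\cR(u)$ (the case ${\rm Ord}^u(\tilde y)>1$ giving rank $0$); since $m_{\tilde y}\le M$ and $\rank\eta>M$ is an open condition, $\rank\eta\le M$ on all of $\overline X$. Finally $\tilde{\cS}(u)$ is closed (as established above) and $\pi_X$-saturated (because $m_x$ depends only on $\pi_X(x)$, by equivariance of $\tilde u$), so $\cS_M(u):=\pi_X(\tilde{\cS}(u))$ is closed in $X$; it suffices to prove $\cS_M(u)=X\cap Z_M$.

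The inclusion $X\cap Z_M\subseteq\cS_M(u)$ is just a restatement of \eqref{eq:inclusion}: that inclusion says $\pi_X(\tilde{\cR}(u))\subseteq X^\circ$, so $X\cap Z_M=X\setminus X^\circ\subseteq X\setminus\pi_X(\tilde{\cR}(u))=\cS_M(u)$. For the reverse inclusion I will show that $\eta^M$ vanishes at $\pi_X(x_0)$ for every $x_0\in\tilde{\cS}(u)$. If $x_0\in\cR(\tilde u)$, then $m_{x_0}<M$ by \cref{dfn:M}, and by the paragraph above $\rank\eta(\pi_X x_0)\le m_{x_0}<M$, so $\eta^M$ vanishes at $\pi_X(x_0)$.

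The main case is $x_0\in\cS(\tilde u)$. I will work with the function $N_{\eta^M}$ on $\overline X$ equal to the sum of the squared norms, in a fixed smooth hermitian metric on $\Omega^M_{\overline X}(\log\Sigma)$, of the branches of $\eta^M$: it is continuous, being the corresponding symmetric function of the roots of the monic polynomial $P_{\eta^M}(T)$ with holomorphic coefficients, and by the construction in \cref{lem:5} it coincides on $\cR(u)$ with a positive multiple of $\|\wedge^M\partial u\|^2$, the squared norm of the family of $M$-fold wedges $\partial u_{i_1}\wedge\dots\wedge\partial u_{i_M}$. Hence it is enough to show $\|\wedge^M\partial u\|^2(\tilde y)\to 0$ as $\tilde y\to x_0$ through $\cR(\tilde u)$. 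Note that $|\nabla u|^2$ agrees on $\cR(u)$ --- hence, being $L^1_{\rm loc}$ with the continuous representative $2N_\eta$ and by \cref{rem:energydensity}, everywhere --- with the continuous function $2N_\eta$; consequently $|\nabla u^2|^2=|\nabla u|^2-|\nabla u^1|^2$ is continuous too. If ${\rm Ord}^u(x_0)>1$, then $|\nabla u|^2$ vanishes at $\pi_X(x_0)$ by \cref{ord1} and $\|\wedge^M\partial u\|^2\le C\bigl(\tfrac12|\nabla u|^2\bigr)^M\to 0$. If ${\rm Ord}^u(x_0)=1$, I use the splitting $\tilde u=(u^1,u^2):B\to\bR^m\times\cF$ from \cref{product} with $m=m_{x_0}<M$: working in an apartment chart at $\tilde y\in\cR(\tilde u)\cap B$ compatible with this product, $\partial\tilde u=(\partial u^1,\partial u^2)$, and since $u^1$ has only $m<M$ components every $M$-fold wedge of components of $\partial\tilde u$ carries at least one factor $\partial u^2$; as $\tilde y\to x_0$ these factors tend to $0$ (because $\|\partial u^2\|^2=\tfrac12|\nabla u^2|^2$ is continuous and vanishes at $x_0$, by \cref{ord1} applied to $u^2$, whose order there is $\ge 1+\epsilon$) while the factors $\partial u^1$ stay bounded, so again $\|\wedge^M\partial u\|^2(\tilde y)\to 0$. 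In both sub-cases continuity of $N_{\eta^M}$ gives $N_{\eta^M}(\pi_X x_0)=0$, so every branch of $\eta^M$ vanishes at $\pi_X(x_0)$, i.e.\ $\pi_X(x_0)\in X\cap Z_M$. Combining the two inclusions yields $\cS_M(u)=X\cap Z_M=X\setminus X^\circ$, which is \cref{claim:ord2}.

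The step I expect to be the main obstacle is this last one: $\eta^M$ must be controlled at the singular points of $\tilde u$, where $\tilde u$ is not differentiable and the description of $\eta$ by single‑valued branches breaks down. The device that makes it go through is to trade the (only locally defined, and only on $\cR(u)$) quantity $\|\wedge^M\partial u\|^2$ for the globally defined continuous function $N_{\eta^M}$, and to invoke the Gromov-Schoen structure theory --- the order dichotomy and the splitting \cref{product} --- only in order to obtain the vanishing $\|\wedge^M\partial u\|^2\to 0$ at singular points; that vanishing is in the end forced by the purely numerical fact that the flat factor of \cref{product} has dimension $m_{x_0}<M$, so contributes nothing to an $M$‑fold exterior power.
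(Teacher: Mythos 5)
Your proposal is correct and follows essentially the same route as the paper's proof: the order dichotomy and Gromov--Schoen splitting $\tilde u=(u^1,u^2)$, the observation that $m_{x_0}<M$ forces a $\partial u^2$-factor into every $M$-fold wedge, the vanishing of $|\nabla u^2|^2$ at $x_0$ together with boundedness of the $\partial u^1$-factors, and continuity of the characteristic-polynomial coefficients of $\eta^M$ to conclude $|Z_{\eta^M}|_{x_0}\subset\{0\}$. The only differences are organizational (you route the comparison through the continuous function $N_{\eta^M}$ and treat regular-but-critical points separately via a rank count, where the paper uses the explicit orthogonal comparison \eqref{eq:sum}--\eqref{eq:tends}), not mathematical.
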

\begin{proof}
	Let $x_0\in X$.  If ${\rm Ord}^u(x_0) >1$, then $|\nabla u|^2(x_0)=0$ by \cref{ord1}.  If ${\rm Ord}^u(x_0)=1$, then we  apply \Cref{item:splitting}  above \eqref{prod}.  Thus, in either case,  there exists $r>0$ and an $F$-connected complex $\cF$ such that 
	\begin{enumerate}[label=(\alph*)]
		\item   for some connected component $\Omega$ of $\pi_X^{-1}(B_r(x_0))$, $\pi_X|_{\Omega}:\Omega\to B_r(x_0)$ is an isomorphism.
		\item  We have 	\begin{equation}  \label{product2}
			\tilde{u}\circ (\pi_{X}^{-1}|_{\Omega})|_{B_r(x_0)}=(u^1,u^2): B_r(x_0) \rightarrow   \bR^k \times \cF,
		\end{equation}
		such that $u^1: B_{r}(x_0) \rightarrow \bR^k$ is a smooth pluriharmonic map with rank at each point of $B_\sigma(x_0)$ equal to  $k$ (see the proof of \cref{lem:compl}) and  $u^2:B_{r}(x_0) \rightarrow \cF$ is a pluriharmonic map with ${\rm Ord}^{u^2}(x_0) \geq 1+\ep$ for some $\ep>0$ and $|\nabla u^2|(x_0) =0$ by \cref{ord1}.  Here, we are using the following convention:  If $k=M$, then $u^2$ is a constant map, and if $k=0$, then $(u^1,u^2)=u^2$. 
	\end{enumerate}
 	Note that $\cF$ has an Euclidean building structure.  	 By the proof of \cref{item:extend} and \cref{lem:5},    the pluriharmonic map $u^2$ in \eqref{product2} induces a multivalued   1-form $\psi_0$ on $B_r(x_0)$ satisfying the properties in \cref{lem:5}.  Then for each $x_1\in B_r(x_0)\cap \cR(u)$, it has a neighborhood $\Omega_{x_1}$  over  which the multivalued   1-form $\psi$ are given by holomorphic 1-forms $\sqcup_{i=1}^{\ell}\{\psi_{i,1},\ldots,\psi_{i,N-k}\}$, that is the  partition of $\psi_0$ in \cref{item:ortho}.  
	By \eqref{eq:energy3}, one has 
	\begin{align}\label{eq:density2}
		|\nabla u^2|^2=2\sum_{i=1}^{\ell}\sum_{j=1}^{N-k}|\psi_{i,j}|^2.
	\end{align} 
	We define $\psi_{i,N-k+j}:=\frac{1}{\sqrt{\ell}}\partial u_{j}^1$ for each $i\in \{1,\ldots,\ell\}$ and $j\in \{1,\ldots,k\}$.  Therefore,  $\sqcup_{i=1}^{\ell}\{\psi_{i,1},\ldots,\psi_{i,N}\}$ is a multivalued 1-form associated with $(u^1,u^2)$ defined in \cref{lem:5}. 
	
	We  can shrink $\Omega_{x_1}$ such that $\eta$ is given by  holomorphic 1-forms $\sqcup_{i=1}^{\ell'}\{\omega_{i,1},\ldots,\omega_{i,N}\}$ on $\Omega_{x_1}$, that is the  partition of $\eta$ in \cref{item:ortho}.  Hence, by \eqref{eq:orth} and \eqref{eq:orth2},  for each $i$ and $j$, there exists a constant matrix $M_{i,j}\in {\rm O}(N,\bR)$ such that
	\begin{align}\label{eq:ortho}
		\begin{bmatrix}
			\omega_{i,1}, & \ldots, & \omega_{i,N}
		\end{bmatrix} =	 \begin{bmatrix}
			\psi_{j,1}, & \cdots, &\psi_{j,N}
		\end{bmatrix} \cdot M_{i,j} \cdot  \frac{\sqrt{\ell'}}{\sqrt{\ell}}.
	\end{align} 
	By \eqref{eq:orth} and the definition of $\eta^M$ in \cref{item:p}, over $\Omega_{x_1}$,  the multivalued section ${\eta^M}$ of $E$ is given by 
	$$
	\sqcup_{j=1}^{\ell'}\{\pm \omega_{j,i_1}\wedge\cdots\wedge \omega_{j,i_M} \}_{1\leq i_1<\cdots<i_M\leq N}.
	$$
On the other hand, 	by \cref{item:p}, $(u^1,u^2)$ induces another multivalued section of  $E|_{B_r(x_0)}$, which is locally represented by 
	$$
	\sqcup_{j=1}^{\ell}\{\pm \psi_{j,i_1}\wedge\cdots\wedge \psi_{j,i_M} \}_{1\leq i_1<\cdots<i_M\leq N}.
	$$
	For notational simplicity, for each $I=(i_1,\ldots,i_M)\subset \{1,\ldots,N\}$ with $1\leq i_1<\cdots<i_M\leq N$, we  write
	$$
	\omega_{j,I}:=\omega_{j,i_1}\wedge\cdots\wedge \omega_{j,i_M}, \quad \forall j\in \{1,\ldots,\ell'\},
	$$
	and
	$$
	\psi_{j,I}:=\psi_{j,i_1}\wedge\cdots\wedge \psi_{j,i_M},\quad \forall j\in \{1,\ldots,\ell\}.
	$$
	Therefore, by \eqref{eq:ortho} there exists a constant matrix of $\tilde{M}_{i,j}\in {\rm O}(\binom{N}{M},\bR)$  such that
	\begin{align*} 
		[\omega_{j,I}]_{1\leq i_1<\cdots<i_M\leq N}  =[\psi_{i,I}]_{1\leq i_1<\cdots<i_M\leq N}\cdot \tilde{M}_{i,j}(\frac{\sqrt{\ell'}}{\sqrt{\ell}})^M.
	\end{align*} 
	Thus, we have the following equality, which holds over the entire $\cR(u)\cap B_r(x_0)$:
	\begin{align}\label{eq:sum}
		\sum_{j=1}^{\ell'}\sum_{1\leq i_1<\cdots<i_M\leq N}|\omega_{j,I}|_{h_E}^2=  \frac{(\ell')^{M+1}}{\ell^M}\sum_{1\leq i_1<\cdots<i_M\leq N}|\psi_{1,I}|_{h_E}^2,
	\end{align}  
	Note that there exists a constant $C>1$ such that  
	\begin{align}\label{eq:sum2}|\psi_{i,N-k+j}(x)|=|\frac{1}{\sqrt{\ell}}\partial u_{j}^1|\leq C, \quad \forall\  x\in B_{r}(x_0)
	\end{align} for each $i\in \{1,\ldots,\ell\}$ and $j\in \{1,\ldots,k\}$. 
	
	We now consider the cases of $x_0 \in \tilde \cS(u)$ and $x_0 \in \tilde \cR(u)$ separately:
	\begin{enumerate}[label=(\alph*)]
		\item
		If $x_0 \in \tilde \cS(u)$, then $M>k$.  By \eqref{eq:density2} and \eqref{eq:sum2},  for    any $x\in   \Omega_{x_1}$, we have
		\begin{align}\label{eq:tends}
			| \psi_{1,I}|^2\leq   C^{2k} |\nabla u^2|^{2\lambda(I)},
		\end{align} 
		where $\lambda(I)$ denotes the cardinality of $I\cap \{1,\ldots,N-k\}$,   that is a positive integer. Recall that $|\nabla u^2(x_0)|^2=0$. Since $C$ is a constant independent of $x_1\in B_r(x_0)\cap\cR(u)$,  it then follows from  \eqref{eq:sum} and \eqref{eq:tends}  that
		$$
		\lim\limits_{x\in \cR(u),x\to x_0}\sum_{j=1}^{\ell'}\sum_{1\leq i_1<\cdots<i_M\leq N}|\omega_{j,I}|^2(x)=0. 
		$$
		Since the multivalued section ${\eta^M}$  on $E$ is locally represented  by   
		$
		\sqcup_{j=1}^{\ell'}\{\pm \omega_{j,I} \}_{1\leq i_1<\cdots<i_M\leq m}
		$, it follows that 
		that $|Z_{\eta^M}|_{x_0}\subset \{0\}$.  In other words, $x_0 \notin  {X}^\circ$.
		\item
		If $x_0 \in \tilde \cR(u)$,  by \eqref{eq:inclusion}, we have $x_0\in X^\circ$.  
	\end{enumerate}
	In conclusion, we have $\tilde{\cR}(u)=X^\circ$. The lemma is proved. 
\end{proof}


\end{document}